\numberwithin{equation}{section}
\theoremstyle{plain}
\newtheorem{theorem}{Theorem}[section]
\newtheorem*{theorem*}{Theorem}
\newtheorem{lemma}{Lemma}[section]
\newtheorem{corollary}{Corollary}[section]
\newtheorem{definition}{Definition}[section]
\newtheorem{proposition}{Proposition}
\newtheorem{remark}{Remark}
\theoremstyle{plain}
\newtheorem{assumption}{Assumption}
\renewcommand{\(}{$\,}
\renewcommand{\)}{\,$}
\renewcommand{\o}[1]{\bm{#1}}
\newcommand{\cc}[1]{\mathscr{#1}}
\def\ND{\cc{N}}
\def\eqdef{\stackrel{\operatorname{def}}{=}}
\renewcommand{\Gamma}{\varGamma}
\renewcommand{\Pi}{\varPi}
\renewcommand{\Sigma}{\varSigma}
\renewcommand{\Delta}{\varDelta}
\renewcommand{\Lambda}{\varLambda}
\renewcommand{\Psi}{\varPsi}
\renewcommand{\Phi}{\varPhi}
\renewcommand{\Theta}{\varTheta}
\renewcommand{\Omega}{\varOmega}
\renewcommand{\Xi}{\varXi}
\renewcommand{\Upsilon}{\varUpsilon}
\DeclareMathOperator*{\argmin}{argmin}
\newcommand*{\R}{\mathbb{R}}
\newcommand*{\C}{\mathbb{C}}
\newcommand*{\sym}{\mathrm{Sym}}
\renewcommand*{\H}{\mathbb{H}}
\newcommand*{\M}{\mathbb{M}}
\newcommand*{\A}{\mathbb{A}}
\def\T{\top}
\DeclareMathOperator{\diag}{diag}
\DeclareMathOperator{\tr}{tr}
\DeclareMathOperator{\range}{range}
\DeclareMathOperator{\rank}{rank}
\renewcommand*{\P}{\mathbb{P}}
\newcommand*{\E}{\mathbb{E}}
\DeclareMathOperator{\var}{Var}
\def\CONST{\mathtt{C}}
\newcommand*{\lmax}{\lambda_{max}}
\newcommand*{\lmin}{\lambda_{min}}
\newcommand*{\diff}{\,d}
\newcommand*{\dT}[2]{\o{d T}_{#1}^{#2}}
\newcommand*{\dt}[2]{\o{d t}_{#1}^{#2}}
\newcommand*{\Id}{\o{Id}}
\renewcommand{\bar}[1]{\overline{#1}}
\renewcommand{\tilde}[1]{\widetilde{#1}}
\begin{document}
\begin{frontmatter}
\title{Statistical inference for Bures-Wasserstein barycenters}
\runtitle{Inference for Bures-Wasserstein barycenters}

\begin{aug}
\author{\fnms{Alexey} \snm{Kroshnin}
\ead[label=e1]{kroshnin@math.univ-lyon1.fr}
}
\address{Institute for Information Transmission Problems RAS\\
National Research University Higher School of Economics\\
Universit{\'e} Claude Bernard Lyon 1\\
\printead{e1}}

\author{\fnms{Vladimir} \snm{Spokoiny}
\ead[label=e2]{spokoiny@wias-berlin.de}
}
\address{Weierstrass Institute for Applied Analysis and Stochastics\\
\printead{e2}}
\and
\author{\fnms{Alexandra} \snm{Suvorikova}
\ead[label=e3]{suvorikova@math.uni-potsdam.de}}
\address{University of Potsdam\\
\printead{e3}}

\runauthor{Kroshnin, Spokoiny, Suvorikova}


\end{aug}

\begin{abstract}
In this work we introduce the concept of Bures-Wasserstein barycenter \(Q_*\), that is essentially a Fr\'echet mean of some distribution \(\P\) supported on a subspace of positive semi-definite Hermitian operators \(\H_{+}(d)\).
We allow a barycenter to be constrained to some affine subspace of \(\H_{+}(d)\) and provide conditions ensuring its existence and uniqueness.
We also investigate convergence and concentration properties of an empirical counterpart of \(Q_*\) in both Frobenius norm and Bures-Wasserstein distance, and explain, how obtained results are connected to optimal transportation theory and can be applied to statistical inference in quantum mechanics.
\end{abstract}



\begin{keyword}
\kwd{Bures-Wasserstein barycenter}
\kwd{Wasserstein barycenter}
\kwd{Hermitian operators}
\kwd{Central limit theorem}
\kwd{Concentration}
\end{keyword}
\end{frontmatter}

\section{Introduction}
Space of finite-dimensional Hermitian operators \(\H(d)\) provides a powerful toolbox 
for data representation.
For instance, in quantum mechanics it is used for mathematical description
of physical properties of a quantum system, also known as observables. 
The reason is due to the fact that the measurements obtained in a physical experiment
should be associated to real-valued quantities. 
Hermitian operators posses real-valued spectrum and
satisfy the above requirement.
A subspace \(\sym(d) \subset \H(d) \) of real-valued 
symmetric matrices is also of great interest: 
points in \(\sym(d)\) are widely used for
description of systems in engineering applications, medical studies, neural sciences, evolutionary biology e.t.c.
Usually such data sets are considered to be randomly sampled from an unknown distribution \(\P\)~(\cite{goodnight1997bootstrap, calsbeek2009empirical, alvarez2011wide, del2017robust, gonzalez2017absolute}), and
statistical characteristics of \(\P\) such as, in particular, mean and variance,
appear to be of interest for further
planning of an experiment and analysis of obtained results fur further development of 
natural science models. 
The current study focuses on a space of positive semi-definite 
Hermitian matrices \(\H_{+}(d) \subset \H(d) \) 
and presents a possible approach to analysis and aggregation 
of relevant statistical information from data-sets, 
for which the linearity assumption might be violated. This makes classical Euclidean
definitions of mean and variance not sensitive enough to capture
effects of interest.   
This case appears extremely often in multiple contexts. 
As an example one can consider a data set that is
represented as probability measures which belong to the same scale-location family, e.g. some astronomic measurements~\cite{alvarez2018wide}, Example 4.6.
Non-linearity assumption requires the development of a novel toolbox suitable for further statistical analysis. 
In order to detect non-linear effects, 
we suggest to endow \(\H_{+}(d) \) with the Bures-Wasserstein
distance \(d_{BW}\) which is recently introduced in a seminal paper~\cite{bhatia2018bures}. It is defined as follows.
For any pair of positive matrices \(Q, S \in \H_{+}(d)\) it is written as:
\begin{equation}
\label{def:Bures}
d_{BW}^2(Q, S) = \tr Q + \tr S - 2\tr\left( Q^{1/2} S Q^{1/2}\right )^{1/2}.
\end{equation}
It is worth noting that being restricted to the space of symmetric positive definite matrices \(\sym_{++}(d) \), 
\(d_{BW}\) boils down to a classical 2-Wasserstein distance between measures
that belong to the same scale-location family (see e.g.~\cite{agueh2011barycenters}, Section 6
or~\cite{alvarez2011wide}).
A more detailed discussion on this particular choice of the distance is presented in Section~\ref{section:Bures}.

After choosing a proper distance, we are now ready to introduce a model of information aggregation the statistical properties of which are investigated in the current study. Let \(\P\) be a probability distribution 
supported on some set \(\mathcal{S} \subseteq \H_+(d)\). 
Further without loss of generality we assume, 
that \(\P\) assigns positive probability to the intersection of \(\mathcal{S}\)
with space of positive definite Hermitian matrices \(\H_{++}(d)\),
and that the spectrum of its elemnts \(S \in \mathcal{S} \) is on average bounded away from infinity:
\begin{assumption}
	\label{asm:main_0}
	\[
	\P\left(\mathcal{S}\cap \H_{++}(d)\right) > 0,
	\quad
	\E\tr S < +\infty.
	\]
\end{assumption}
Two statistically important characteristics of \(\P\) are Fr\'echet mean and Fr\'echet variance.
The former one can be regarded as
a typical representative of a data-set in hand, 
whereas the latter appears in analysis
on data variability, see e.g.~\cite{del2015statistical}.
We briefly recall both concepts below.
For an arbitrary point \(Q \in \H_+(d)\) Fr\'echet variance of \(\P\) is defined as
\[
\mathcal{V}(Q) \eqdef \int_{\mathcal{S}}d_{BW}^2(Q, S)d\P(S).
\]
Classical Fr\'echet mean of \(\P\) is a set of global minimizers of \(\mathcal{V}(Q)\):
\begin{equation}
\label{def:bar}
Q_* \in \argmin_{Q \in \H_{+}(d)}\mathcal{V}(Q).
\end{equation}
However, in many cases
we are interested in a minimizer,
that belongs to some affine sub-space \(\A\):
\begin{equation}
\label{def:bar_aff}
Q_* \in \argmin_{Q \in \H_{+}(d)\cap \A} \mathcal{V}(Q).
\end{equation}
For instance, such a necessity may arise while considering 
a random set of quantum density operators. For introduction to density operators theory one may look through~\cite{fano1957description}.
This example is considered in more details in Section~\ref{section:quantum}.
Note, that the setting~\eqref{def:bar_aff} covers
the setting~\eqref{def:bar}. So without loss of generality 
we further address only~\eqref{def:bar_aff}.
Obviously, the first crucial question concerns existence and
uniqueness of \(Q_*\). And positive answers on both 
issues, along with necessary conditions, are presented in Theorem~\ref{thm:uniqueness}.
This immediately allows us 
to define the {global} Fr\'echet variance of \(\P\) as \(\mathcal{V}_* \eqdef \mathcal{V}(Q_*)\).

Given an i.i.d. sample \(S_1,..., S_n\), 
\(S_i \overset{iid}{\sim} \P \), one constructs
an empirical analogue of \(\mathcal{V}(Q)\):
\[
\mathcal{V}_n(Q) \eqdef \frac{1}{n} \sum^n_{i = 1} d_{BW}^2(Q, S_i). 
\]
An empirical Fr\'echet mean and global empirical variance
also exist {and unique}:
\begin{equation}
\label{def:barn_aff}
Q_n = \argmin_{Q \in \H_{+}(d)\cap \A}\mathcal{V}_n(Q),
\quad
\mathcal{V}_n \eqdef \mathcal{V}_n(Q_n).
\end{equation}
These facts follow from Theorem~\ref{thm:uniqueness}.
This work studies convergence of the estimators \(Q_n\) and \(\mathcal{V}_n \) and investigate
concentration properties of both quantities.
The discussion 
of practical applicability of the obtained results is postponed to Section~\ref{section:examples}. There we explain their relation 
to optimal transportation theory and present a possible application to statistical analysis in quantum mechanics.

\subsection{Contribution of the present study}
\paragraph{Central limit theorem and 
	{concentration} of \(Q_n\)}
The first main result of this study concerns asymptotic normality of the approximation error of population Fr\'echet mean 
by its empirical counterpart:
\[
\sqrt{n} \left(Q_n - Q_*\right) \rightharpoonup \ND\left(0, \o{\Upxi}\right),
\]
where ``\(\rightharpoonup\)'' stands for weak convergence, and \(\o{\Upxi} \) is some covariance operator
acting on the linear subspace \(\M \subset \H(d)\) associated with affine subspace \(\A\).
From now on we use bold symbols e.g. \(\o{A}, \o{B}\), 
to denote operators, wheres classical ones i.e.\({A}, {B}\) stand for matrices or vectors.
This convergence result cannot be directly used for construction of asymptotic confidence sets
because it relies on the unknown covariance matrix \( \o{\Upxi} \).
However, Theorem~\ref{thm:CLT} ensures,
that this covariance matrix can be replaced by its empirical counterpart \( \o{\hat{\Upxi}_n} \):
\[
\sqrt{n} \, \o{\hat{\Upxi}_n}^{-1/2} \left(Q_n - Q_*\right) \rightharpoonup \ND\left(0, \Id\right),
\]
where \(\Id\) denotes an identity operator.
Along with asymptotic normality of \((Q_n - Q_*)\), we are interested in the limiting
distribution of \(\mathcal{L}\left(\sqrt{n} d_{BW}(Q_n, Q_*)\right)\). 
Corollary~\ref{thm:asymptotic} shows, that
\[
\mathcal{L}\left(\sqrt{n} d_{BW}(Q_n, Q_*)\right) \rightharpoonup \mathcal{L}\left(\norm{\xi}_{F}\right),
\]
where {\(\xi\) is some normally distributed} vector.
Data-driven asymptotic confidence sets for \(\sqrt{n}d_{BW}(Q_n, Q_*)\)
are obtained by replacement of \(\xi\) by
its empirical counterpart \(\xi_n\):
\[
{d_w\left(\mathcal{L}\left(\sqrt{n} d_{BW}(Q_n, Q_*)\right), 
	\mathcal{L}\left(\norm{\xi_n}_F\right) \right) \rightarrow 0},
\]
where \(d_{w} \) is a metric which induces weak convergence. 
Furthermore, we investigate concentration
properties of \(Q_n\) in both Frobenius norm
and \(d_{BW}\) metric. 
The following two bounds hold with h.p.:
\[
\norm{Q^{-1/2}_* Q_n Q^{-1/2}_* - I}_{F} \le \frac{\CONST (d + t)}{\sqrt{n}},
\quad
d_{BW}(Q_n, Q_*) \le \frac{\CONST (d + t)}{\sqrt{n}},
\]
where \(\CONST\) stands for some generic constant.
A more detailed discussion is presented in Theorem~\ref{theorem:concentration_Q}
and Corollary~\ref{corollary:concantration_in_dbw}
respectively.
It is worth noting that concentration results are obtained under 
assumption of sub-Gaussianity of \(S\) in the following sense:
\begin{assumption}[Sub-Gaussianity of \(\sqrt{\tr S}\)]
	\label{asm:subgauss}
	Let \(\sqrt{\tr S}\) be sub-Gaussian, i.e.\
	\[
	\P\left\{\sqrt{\tr S} \ge t\right\} \le B e^{- b t^2} \quad \text{for any}~ t \ge 0,
	\]
	with some constants \(B, b > 0\).
\end{assumption}

All above-mentioned results are closely connected 
to convergence and concentration of empirical \(2\)-Wasserstein barycenters.
For the sake of transparency
we postpone this discussion to Section~\ref{section:wasserstein}.

\paragraph{CLT and concentration for \(\mathcal{V}_n\)}
We also show asymptotic normality
of approximation error of 
\(\mathcal{V}_{*}\) by \(\mathcal{V}_n \)
and prove concentration of \(\mathcal{V}_n \):
\[
\sqrt{n} \left(\mathcal{V}_n -  \mathcal{V}_*\right) \rightharpoonup \ND\left(0, \var d_{BW}^2(Q_*, S)\right),
\]
\[
\left|\mathcal{V}_n- \mathcal{V}_* \right|
\le \max\left(\frac{c_1(B, b, t, d)}{\sqrt{n}}, \frac{c_2(B, b, t, d)}{{n}}\right),
\]
where the latter result holds with h.p.,
and \(c_1(B,b, t, d)\), \(c_2(B, b, t, d)\) are constants which
depend on sub-Gaussianity parameters
\(B, b\), dimension \(d\), and parameter \(t\).
See Theorem~\ref{theorem:CLT_V} and
Theorem~\ref{theorem:concentration_V} respectively.

The paper is organised as follows. 
Section~\ref{section:main} explains the obtained results in more details.
Section~\ref{section:examples} illustrates the connection to other scientific problems. 
Finally, Section~\ref{section:simulations} contains simulations and experiments on 
both artificial and real data-sets.

\section{Results}
\label{section:main}
This section presents obtained results in more details, 
and the first question we address is the particular choice of the distance.
\subsection{Bures-Wasserstein distance}
\label{section:Bures}
The original Bures metric appears in quantum mechanics
in relation to fidelity measure
between two quantum states and is used for measurement of quantum entanglement~\cite{marian2008bures, dajka2011distance}. Let \(\rho \), \(\sigma\) be
two quantum states. Mathematically speaking, this means that
\begin{equation}
    \label{def:density}
    \rho, \sigma \in \H_{+}(d),
    \quad
    \tr \rho = 1,
    \quad
    \tr \sigma = 1.
\end{equation}
Fidelity of these states is defined as \(\mathcal{F}(\rho, \sigma) = \left(\tr\sqrt{\rho^{1/2}\sigma \rho^{1/2}} \right)^2 \).
It quantifies ``closeness'' of \(\rho\) and \(\sigma\), see
~\cite{jozsa1994fidelity}. 
It is obvious, that in case of~\eqref{def:density} Bures-Wasserstein distance turns into
\begin{equation}
\label{def:fidelity}
d^2_{B}(\rho, \sigma) 
= 2\left(1 - \mathcal{F}^{1/2}(\rho, \sigma) \right).
\end{equation}

It is interesting to note, that the \(d_{BW}\) distance appears not only one of the central distances, used in quantum mechanics, but also an object of extensive investigation in transportation theory~\cite{takatsu2011wasserstein}.
Let \(\ND(0, Q) \) and
\(\ND(0, S) \) be two
centred Gaussian distributions.
Then 2-Wasserstein distance between them is written as
\[
d^2_{W_2}\left(\ND(0, Q), \ND(0, S) \right) = \tr Q + \tr S - 2\tr\left(Q^{1/2}S Q^{1/2} \right)^{1/2}.
\]
The case of Gaussian measures is naturally extended
to measures that belong to a same scale-location family~\cite{alvarez2018wide}.
In the last few years Wasserstein distance attracts a lot of attention of
data scientists and machine learning community, 
as it takes into account
geometrical similarities between objects, see e.g.
~\cite{gramfort2015fast, flamary2018wasserstein, montavon2016wasserstein}. Due to this fact \(d_{BW}\) satisfies the requirement of taking into account non-linearity of a data set under consideration.
For more information on optimal transportation theory we recommend ~\cite{villani_optimal_2009}.

Following~\cite{bhatia2018bures}, we continue to investigate properties of
\(d_{BW}(Q, S)\). The next lemma presents an alternative analytical expression for the distance.
\begin{lemma}
\label{lemma:def_T}
    Let \(Q, S \in \H_+(d)\) and \(Q \succ 0\). Then~\eqref{def:Bures} can be rewritten  as
    \[
    d_{BW}^2(Q, S) = \norm{\left(T_Q^S - I\right) Q^{1/2}}_F^2 = \tr \left(T_Q^S - I\right) Q \left(T_Q^S - I\right),
    \]
    where
    \begin{align}
        T_Q^S & \eqdef \argmin_{T : T Q T^* = S} \norm{(T - I) Q^{1/2}}_F \nonumber \\ 
        & = S^{1/2} \bigl(S^{1/2} Q S^{1/2} \bigr)^{-1/2} S^{1/2} 
         = Q^{-1/2} \bigl(Q^{1/2} S Q^{1/2}\bigr)^{1/2} Q^{-1/2}. \label{def:T_Q}
    \end{align}
   By $\bigl(S^{1/2} Q S^{1/2} \bigr)^{-1/2}$ we denote the pseudo-inverse matrix $\left(\bigl(S^{1/2} Q S^{1/2} \bigr)^{1/2}\right)^+$.
\end{lemma}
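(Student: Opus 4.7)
The proof reduces the constrained optimization defining $T_Q^S$ to a trace-maximization problem. Since $Q \succ 0$, I substitute $U \eqdef T Q^{1/2}$; the constraint $T Q T^* = S$ becomes $U U^* = S$, and expanding gives
\begin{equation*}
\norm{(T - I) Q^{1/2}}_F^2 = \norm{U - Q^{1/2}}_F^2 = \tr S + \tr Q - 2 \Re \tr \bigl(U Q^{1/2}\bigr).
\end{equation*}
Thus the original minimization is equivalent to maximizing $\Re \tr(U Q^{1/2})$ subject to $U U^* = S$.

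The key step is a von Neumann-type bound on this maximum. Writing $U = S^{1/2} W$ with $W$ a partial isometry (unitary when $S \succ 0$), one has $\Re \tr(U Q^{1/2}) = \Re \tr(Q^{1/2} S^{1/2} W)$. Using the polar decomposition $Q^{1/2} S^{1/2} = V \bigl(S^{1/2} Q S^{1/2}\bigr)^{1/2}$ and von Neumann's trace inequality,
\begin{equation*}
\Re \tr(Q^{1/2} S^{1/2} W) \le \tr \bigl(S^{1/2} Q S^{1/2}\bigr)^{1/2} = \tr \bigl(Q^{1/2} S Q^{1/2}\bigr)^{1/2},
\end{equation*}
with equality attained by $W = V^*$; the last equality uses that $S^{1/2} Q S^{1/2}$ and $Q^{1/2} S Q^{1/2}$ share their nonzero spectrum. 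Substituting back recovers $\min \norm{(T - I) Q^{1/2}}_F^2 = \tr Q + \tr S - 2 \tr(Q^{1/2} S Q^{1/2})^{1/2} = d_{BW}^2(Q, S)$, proving the first identity.

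For the explicit formulas, inserting $W = V^* = (S^{1/2} Q S^{1/2})^{-1/2} S^{1/2} Q^{1/2}$ into $T = U Q^{-1/2} = S^{1/2} W Q^{-1/2}$ and simplifying yields $T_Q^S = S^{1/2} (S^{1/2} Q S^{1/2})^{-1/2} S^{1/2}$, which is positive Hermitian, and a direct computation confirms $T_Q^S Q T_Q^S = S$. The second form follows from uniqueness of the positive Hermitian solution to this equation: the change of variables $R \eqdef Q^{1/2} T Q^{1/2}$ converts $T Q T = S$ into $R^2 = Q^{1/2} S Q^{1/2}$, forcing $R = (Q^{1/2} S Q^{1/2})^{1/2}$ and hence $T_Q^S = Q^{-1/2} (Q^{1/2} S Q^{1/2})^{1/2} Q^{-1/2}$. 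The trace identity $\norm{(T_Q^S - I) Q^{1/2}}_F^2 = \tr (T_Q^S - I) Q (T_Q^S - I)$ is then immediate from $T_Q^S$ being Hermitian.

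The main obstacle is handling the degenerate case where $S$ is only PSD: $W$ becomes a partial isometry on the range of $S$ and $(S^{1/2} Q S^{1/2})^{1/2}$ fails to be invertible, forcing the pseudo-inverse interpretation stated in the lemma. One must check that $U = S^{1/2} V^*$ still satisfies $U U^* = S$, which relies on the fact that $Q \succ 0$ makes the range of $S^{1/2} Q S^{1/2}$ coincide with the range of $S^{1/2}$, and that the trace inequality remains tight on this subspace. Everything else reduces to algebraic manipulation.
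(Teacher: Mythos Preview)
Your argument is correct. The route, however, differs from the paper's. The paper never reparametrizes: it works directly with $T$ and argues in two successive reductions. First, assuming $T$ is not self-adjoint, it sets $A = Q^{1/2} T Q^{1/2}$, notes $A A^* = Q^{1/2} S Q^{1/2}$, and invokes (without naming it) the fact that among all $A$ with a prescribed $A A^*$ the Hermitian positive square root maximizes $\Re \tr A$; this forces the objective strictly above $d_{BW}^2(Q,S)$. Second, if $T$ is Hermitian but not positive semi-definite, the same comparison shows $Q^{1/2} T Q^{1/2} \preccurlyeq (Q^{1/2} S Q^{1/2})^{1/2}$ with strict trace inequality. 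Only after pinning down $T \in \H_+(d)$ does it read off the closed form.

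Your approach instead absorbs the constraint via $U = T Q^{1/2}$, reduces everything to maximizing $\Re \tr(U Q^{1/2})$ over $U U^* = S$, and makes the key trace bound fully explicit through polar decomposition plus von Neumann's inequality. This is more self-contained: the paper's ``thus $\tr Q^{1/2} T Q^{1/2} < \tr(Q^{1/2} S Q^{1/2})^{1/2}$'' is exactly the step you justify, and your treatment of the degenerate-$S$ case (partial isometry, pseudo-inverse, coincidence of ranges because $Q \succ 0$) is more careful than the paper's. Conversely, the paper's structural two-step argument yields as a byproduct that the minimizer is Hermitian positive semi-definite without computing it first, which is conceptually clean. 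Both proofs rest on the same trace inequality; yours derives it, the paper assumes it.
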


Note, that in optimal transportation theory
\(T_Q^S\) is referred to as an optimal push-forward (optimal transportation map) 
between two centred normal distributions
\(\mathcal{N}(0, Q)\) and \(\mathcal{N}(0, S)\). Following optimal transport notations it is denoted as
\(T^S_{Q}\#\mathcal{N}(0, Q) = \mathcal{N}(0, S)\).

For general notes on optimal transportation maps see~\cite{brenier1991polar}; for a particular case of scale-location and Gaussian families one may refer to~\cite{alvarez2018wide, takatsu2011wasserstein}.
Lemma~\ref{lemma:taylor_T} presents
differentiability of the optimal map \(T^S_Q\). It is one of the key-ingredients in the proof of main results of the present study.
Note, that in case of \(\A = \sym_{++}(d) \) differentiability of \(T^S_{Q} \) is obtained in~\cite{rippl2016limit}.
More technical details on properties of \(d_{BW}\) are presented in Section~\ref{sec:propeties_dbw_proof}. However, for better understanding of the
proofs of main results we highly recommend to at least look through Section~\ref{sec:proof_frechet} which is dedicated to investigation of
properties of \(T^S_{Q}\) and its differential \(\dT{Q}{S}\).

\subsection{Existence and uniqueness of \(Q_*\) and \(Q_n\)}
Along with investigation of properties of the distance in hand
and before moving to more general statistical questions, 
one should ask her- or himself, 
whether Fr\'echet mean \(Q_*\) exists and, if so, is it unique or not? 
Let \(\M \) be a linear subspace of \(\H(d)\) associated to \(\A\), i.e. the following representation holds: \(\A = Q_0 + \M\) for some \(Q_0 \in \H(d)\).
We further assume that \(\A\) has a non-empty intersection
with the space of positive definite operators:
\begin{assumption}
\label{asm:main}
    \(\H_{++}(d) \cap \A \neq \emptyset\), 
\end{assumption}
The next theorem ensures existence and uniqueness of the Fr\'echet mean~\eqref{def:bar_aff}.
\begin{theorem}[Existence and uniqueness of Fr\'echet mean \(Q_*\)]
\label{thm:uniqueness}
    Under Assumptions~\ref{asm:main_0}~and~\ref{asm:main} there exists a unique positive-definite barycenter \(Q_*\) of \(\P\): \(Q_* \succ 0\).
    Moreover, it is characterised as the unique solution of the equation
    \begin{equation}
    \label{def:uniqueness}
        \o{\Pi_\M} \E T_Q^S = \o{\Pi_\M} I, \quad Q \in \H_{++}(d),
    \end{equation}
    where \(\o{\Pi_\M}\) is the orthogonal projector onto 
    \(\M\).
\end{theorem}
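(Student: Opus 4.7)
The plan is to split the theorem into four sub-claims: (i) $\mathcal{V}$ attains its infimum on $\A\cap\H_+(d)$; (ii) $\mathcal{V}$ is strictly convex on $\H_{++}(d)$; (iii) every minimizer of $\mathcal{V}$ on $\A\cap\H_+(d)$ actually lies in the open cone $\H_{++}(d)$; and (iv) on $\H_{++}(d)$ the first-order optimality condition for the constrained problem reduces to $\o{\Pi_\M}(I-\E T_Q^S)=0$. Combining (ii) and (iv) with (iii) will force any critical point of $\mathcal{V}|_{\A\cap\H_+(d)}$ to be the unique global minimizer, giving existence, uniqueness, positive-definiteness, and the Karcher-type characterization at once.

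For (i), I exploit that $d_{BW}$ is a genuine metric on $\H_+(d)$ with $d_{BW}(Q,0)=\sqrt{\tr Q}$, so the reverse triangle inequality gives $d_{BW}(Q,S)\ge |\sqrt{\tr Q}-\sqrt{\tr S}|$. Squaring, integrating, and using Jensen's inequality ($\E\sqrt{\tr S}\le\sqrt{\E\tr S}<\infty$ by Assumption~\ref{asm:main_0}) yields the coercivity bound $\mathcal{V}(Q)\ge \tr Q-2\sqrt{\tr Q}\,\E\sqrt{\tr S}+\E\tr S\to\infty$ as $\tr Q\to\infty$. Sublevel sets of $\mathcal{V}$ on the closed set $\A\cap\H_+(d)$ are therefore bounded, hence compact in the finite-dimensional space $\H(d)$; continuity of $\mathcal{V}$ (via dominated convergence with the crude bound $d_{BW}^2(Q,S)\le 2(\tr Q+\tr S)$) then delivers attainment. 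For (ii), I invoke the classical concavity of $Q\mapsto\tr((Q^{1/2} S Q^{1/2})^{1/2})$ on $\H_+(d)$ for fixed $S\succeq 0$, which is strict on $\H_{++}(d)$ when $S\succ 0$ (follows from strict concavity of $X\mapsto\tr X^{1/2}$ on $\H_{++}(d)$ composed with the invertible linear change of variables $Q\mapsto S^{1/2}QS^{1/2}$). Since Assumption~\ref{asm:main_0} guarantees $\P\{S\succ 0\}>0$, the expectation preserves strict convexity of $\mathcal{V}$ on $\H_{++}(d)$.

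For (iii), I argue by contradiction. Suppose a minimizer $Q_*$ has $\ker Q_*\ne\{0\}$. Assumption~\ref{asm:main} furnishes $Q_0\in\A\cap\H_{++}(d)$, and the path $Q_t=(1-t)Q_*+tQ_0$ lies in $\A\cap\H_{++}(d)$ for $t\in(0,1]$. I will show that on $\{S\in\H_{++}(d)\}$ the scalar function $t\mapsto\tr((Q_t^{1/2} S Q_t^{1/2})^{1/2})$ has right-derivative $+\infty$ at $t=0^+$: after an orthonormal change of basis adapted to the splitting $\range Q_*\oplus\ker Q_*$, the block of $Q_t$ on $\ker Q_*$ scales as $t$, so the corresponding eigenvalues of $Q_t^{1/2} S Q_t^{1/2}$ scale as $t$, producing a $\sqrt{t}$-contribution under the trace of the square root. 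Integrating against the positive-mass event then gives $\partial_t^+\mathcal{V}(Q_t)|_{t=0}=-\infty$, contradicting minimality; hence $Q_*\succ 0$. At an interior minimizer, Lemmas~\ref{lemma:def_T} and~\ref{lemma:taylor_T} together with dominated convergence justify $\nabla\mathcal{V}(Q)=I-\E T_Q^S$; the constrained critical-point condition $\nabla\mathcal{V}(Q)\in\M^\perp$ is then precisely $\o{\Pi_\M}\E T_Q^S=\o{\Pi_\M}I$, i.e.~\eqref{def:uniqueness}. Finally, strict convexity on the convex set $\H_{++}(d)\cap\A$ precludes two distinct solutions of \eqref{def:uniqueness}: for any $Q_1\ne Q_2$, the map $t\mapsto\mathcal{V}((1-t)Q_1+tQ_2)$ would be strictly convex with vanishing derivative at $0$ and $1$, which is impossible.

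The principal obstacle is the boundary-exclusion step (iii): the differential of $T_Q^S$ obtained in Lemma~\ref{lemma:taylor_T} is precisely unavailable at singular $Q$, so one cannot simply differentiate $\mathcal{V}(Q_t)$. The cleanest route is the direct spectral perturbation sketched above, tracking the eigenvalues of $Q_t^{1/2} S Q_t^{1/2}$ that vanish at $t=0$ and isolating their $\sqrt{t}$ asymptotics; everything else reduces to classical matrix-analytic facts and to standard integrability arguments. A secondary technicality is the dominated-convergence justification for exchanging $\nabla_Q$ with $\E$ in (iv), which is routine once one uses the explicit formula for $T_Q^S$ from Lemma~\ref{lemma:def_T} together with the first-moment bound in Assumption~\ref{asm:main_0}.
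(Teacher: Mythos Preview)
Your proposal is correct and follows essentially the same four-step architecture as the paper's proof: coercivity/compactness for existence, strict concavity of $\tr X^{1/2}$ (from \cite{bhatia2018bures}) for uniqueness, a boundary blow-up argument for positive-definiteness, and the first-order condition for the characterization. The only noteworthy difference is that for step~(iii) the paper works with $S^{1/2} Q_t S^{1/2}$ (which is \emph{linear} in $t$) rather than $Q_t^{1/2} S Q_t^{1/2}$, making the block/spectral analysis of the eigenvalues vanishing at $t=0$ more transparent since one avoids taking the square root of the degenerate $Q_t$; as the two matrices share the same nonzero spectrum, your route works equally well.
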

Note, that this result generalises
the result for scale-location families in 2-Wasserstein space, presented in~\cite{alvarez2011wide}, Theorem 3.10 and originally obtained in a seminal work ~\cite{agueh2011barycenters}, Theorem 6.1.
Namely, if \(\mathbb{A} = \sym_{++}(d) \), then
\(Q_*\) exists, is unique, and is characterised as the unique solution 
of a fixed-point equation similar to~\eqref{def:uniqueness}
\[
 Q_* = \E \left(Q^{1/2}_* S Q^{1/2}_* \right)^{1/2} ~\Longleftrightarrow~ \E T^S_{Q_*} = I.
\]
Existence, uniqueness, and measurability of the estimator \(Q_n\) defined in~\eqref{def:barn_aff} is a direct corollary of the above theorem.
The proof of Theorem~\ref{thm:uniqueness} is presented in Section~\ref{section:CLT_proof}.

\subsection{Convergence of \(Q_n\) and \(d_{BW}(Q_n, Q_*)\)}
Armed with the knowledge about properties of \(d_{BW}\), \(Q_*\), and \(Q_n\), we are now equipped enough, so that to introduce the main results of the current study.
Theorem~\ref{thm:CLT} presents asymptotic
convergence of \(Q_n\) to \(Q_*\).
\begin{theorem}[Central limit theorem for the Fr\'echet mean]
\label{thm:CLT}
    Under Assumptions~\ref{asm:main_0} and \ref{asm:main}
    an approximation error rate of the Fr\'echet mean \(Q_*\)
    by its empirical counterpart \(Q_n\) is
    \[
    \sqrt{n} \left(Q_n - Q_*\right) \rightharpoonup \ND\left(0, \o{\Upxi}\right)~\tag{A}
    \]
    where \(\o{\Upxi}\) is a self-adjoint linear operator acting from \(\M\) to \(\M\) {defined in}~\eqref{def:Upxi}.
    Moreover, {if {\(\var\left(T^{S}_{Q_*}\right)\)} is non-degenerated, then}
    \[
    {\sqrt{n} \, \o{\hat{\Upxi}}^{-1/2}_n \left(Q_n - Q_*\right) \rightharpoonup \ND\left(0, (\Id)_\M\right)}, ~\tag{B}
    \]
    with \(\o{\hat{\Upxi}_n} \) is a data-driven empirical counterpart of \(\o{\Upxi} \) {defined in}~\eqref{def:Upxi_n}.
\end{theorem}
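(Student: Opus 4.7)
The plan is to exploit Theorem~\ref{thm:uniqueness}, which characterises both the population and empirical Fr\'echet means as zeros of the $Z$-estimating map
$$F(Q) \eqdef \o{\Pi_\M}\E\bigl(T_Q^S - I\bigr), \qquad F_n(Q) \eqdef \o{\Pi_\M}\frac{1}{n}\sum_{i=1}^n \bigl(T_Q^{S_i} - I\bigr),$$
so that $F(Q_*) = 0 = F_n(Q_n)$ with $Q_*, Q_n \in \H_{++}(d) \cap \A$. The first sub-task is consistency $Q_n \to Q_*$ in probability: a uniform law of large numbers for $\mathcal{V}_n$ on compact subsets of $\H_{++}(d) \cap \A$, combined with the coercivity of $\mathcal{V}$ (which blows up as $\lambda_{\min}(Q) \to 0$ or $\tr Q \to \infty$, by Assumption~\ref{asm:main_0}) and uniqueness of the minimiser from Theorem~\ref{thm:uniqueness}, yields the standard argmin-consistency.

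Once consistency is in hand, I would linearise $F_n$ around $Q_*$ using the differentiability of $T_Q^S$ promised by Lemma~\ref{lemma:taylor_T}. Writing $\dT{Q_*}{S_i}$ for the Fr\'echet derivative in the first argument,
$$0 = F_n(Q_n) = F_n(Q_*) + \o{H}_n (Q_n - Q_*) + R_n, \qquad \o{H}_n \eqdef \o{\Pi_\M}\,\frac{1}{n}\sum_{i=1}^n \dT{Q_*}{S_i}\,\o{\Pi_\M},$$
with $R_n$ a second-order remainder bounded by $O(\|Q_n - Q_*\|^2)$ from local Lipschitzness of $\dT{\cdot}{S}$ near $Q_*$. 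The multivariate CLT applied to the centred i.i.d.\ elements $\o{\Pi_\M}(T_{Q_*}^{S_i} - I) \in \M$ gives
$$\sqrt{n}\, F_n(Q_*) \rightharpoonup \ND\bigl(0,\,\o{\Pi_\M}\var\bigl(T_{Q_*}^S\bigr)\o{\Pi_\M}\bigr),$$
while the weak LLN yields $\o{H}_n \to \o{H} \eqdef \o{\Pi_\M}\,\E\dT{Q_*}{S}\,\o{\Pi_\M}$ in probability. Once invertibility of $\o{H}$ on $\M$ is secured, one gets $\|Q_n - Q_*\| = O_\P(n^{-1/2})$, so the remainder $\o{H}_n^{-1} R_n$ is $o_\P(n^{-1/2})$, and Slutsky delivers part (A) with $\o{\Upxi} = \o{H}^{-1}\o{\Pi_\M}\var(T_{Q_*}^S)\o{\Pi_\M}\o{H}^{-1}$.

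For part (B) I would define $\o{\hat{\Upxi}_n}$ by plugging empirical versions into this sandwich formula, using $\o{H}_n$ re-evaluated at $Q_n$ rather than $Q_*$ together with the empirical covariance of the $T_{Q_n}^{S_i}$. The consistency $\o{\hat{\Upxi}_n} \to \o{\Upxi}$ in probability follows from $Q_n \to Q_*$, continuity of $T_Q^S$ and $\dT{Q}{S}$ in $Q$ on the positive-definite cone, and the LLN; another application of Slutsky converts (A) into (B), using the non-degeneracy hypothesis on $\var(T_{Q_*}^S)$ to take the inverse square root. The main obstacle is the invertibility of $\o{H}$ as an operator on the subspace $\M$: one cannot simply invoke positivity of $\E \dT{Q_*}{S}$ on all of $\H(d)$ and must show that its compression by $\o{\Pi_\M}$ remains strictly positive, which is equivalent to strict convexity of $\mathcal{V}$ along every direction in $\M$ at $Q_*$. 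This ultimately rests on the fine spectral properties of $\dT{Q_*}{S}$ that the paper develops in its analysis of the Fr\'echet derivative, and on Assumption~\ref{asm:main_0} to guarantee that $\o{H}$ inherits strict positivity from a set of positive $\P$-measure.
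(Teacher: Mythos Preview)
Your proposal is essentially the paper's own argument: consistency via uniform convergence of the convex functionals $\mathcal{V}_n\to\mathcal{V}$, first-order expansion of the estimating equation using $\dT{Q_*}{S}$, CLT for $\o{\Pi_\M}(\bar T_n-I)$, LLN for the Hessian, and Slutsky; the sandwich formula you write for $\o{\Upxi}$ is exactly~\eqref{def:Upxi}, with your $\o{H}=-\o{F}$. Two technical points deserve care. First, $\mathcal{V}(Q)$ does \emph{not} blow up as $\lambda_{\min}(Q)\to 0$ (e.g.\ $\mathcal{V}(0)=\E\tr S<\infty$); the paper's consistency argument instead uses that pointwise convergence of convex functions implies uniform convergence on compacts, which together with strict convexity of $\mathcal{V}$ and $Q_*\succ 0$ from Theorem~\ref{thm:uniqueness} forces $Q_n\to Q_*$. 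Second, your remainder bound ``$R_n=O(\|Q_n-Q_*\|^2)$ by local Lipschitzness of $\dT{\cdot}{S}$'' hides a uniformity-in-$S$ issue, since the Lipschitz constant grows with $\|S\|$; the paper handles this via Lemma~\ref{lemma:integral_form}, which bounds $\int_0^1\dT{Q_t}{S}\,dt$ between multiples of $\dT{Q_*}{S}$ depending only on $Q_n'$, so the remainder is $o\bigl(\langle \o{F}_n X,X\rangle\bigr)$ uniformly in $S$ as $Q_n'\to I$. The same lemma (monotonicity plus homogeneity of $\dT{Q}{S}$) is what drives the paper's proof that $\o{\hat F_n}\to\o{F}$ in part~(B), rather than a generic continuity argument.
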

\begin{remark}
Here \((\o{A})_{\M}\) denotes a restriction of a quadratic form \(\o{A}\) to a subspace \(\M\):
        \[
        (\o{A})_{\M} \colon \M \to \M, \quad (\o{A})_{\M}(X) = \o{\Pi}_\M \o{A}(X), \; X \in \M.
        \]
\end{remark}
We intentionally postpone the explicit definitions of 
\(\o{\Upxi}\) and \(\o{\hat{\Upxi}_n}\), as they require 
an introduction of many technical details. This would make the description of main results less transparent.
The proof of the theorem relies on the Fr\'echet differentiablilty of
\(T_Q^S\)
in the vicinity of \(Q_*\):
\[
T^{S}_{Q_n} = S^{1/2} \left(S^{1/2} Q_n S^{1/2}\right)^{-1/2} S^{1/2},
\quad
T^{S}_{Q_n} \approx T_{Q_*}^S + \dT{Q_*}{S}(Q_n - Q_*),
\]
where \(\dT{Q_*}{S}\) is a differential of \(T_{Q}^S\) at point \(Q_*\).
Here we imply differentiability of \(T_{Q}^{S}\) by 
the lower argument \(Q\).

It is worth noting that the result (B) 
obtained in CLT enables construction 
of data-driven asymptotic confidence sets.
However,  
there might appear technical problems with inversion 
of the empirical covariance. For instance, numerical simulations show, that \(\o{\Upxi}\) can be degenerated if \(\P\) is supported on a set of diagonal matrices. This immediately raises a question
concerning the development of some other confidence set construction methodology based on re-sampling techniques which would simplify the process from computational point of view.
We consider this as a subject for further research.

As soon as the Bures-Wasserstein distance is the main tool for the analysis in \(\H_{+}(d)\), the convergence properties of \(d_{BW}(Q_n, Q_*)\)
are also of great interest. The next lemma is almost a straightforward corollary of Theorem~\ref{thm:CLT}.
\begin{corollary}[Asymptotic distribution of \(d_{BW}(Q_n, Q_{*})\)]
\label{thm:asymptotic}
    Under conditions of Theorem~\ref{thm:CLT} it holds
    \[
    \mathcal{L} \left(\sqrt{n} d_{BW}(Q_n, Q_*)\right) 
    \rightharpoonup 
    \mathcal{L}\left(\norm{Q^{1/2}_* \dT{Q_*}{Q_*}(Z) }_{F}\right),
    \]
    where \(Z \in \M \subset \H(d)\) is random matrix, s.t.\ 
    \(Z \sim \ND\left(0,  \o{\Upxi} \right)\).
    Moreover,
    \[
    d_w\Bigl(\mathcal{L}\left(\sqrt{n}d_{BW}(Q_n, Q_*)\right), 
    \mathcal{L}\left(\norm{Q^{1/2}_n \dT{Q_n}{Q_n}(Z_n)}_{F}\right)  \Bigr) \rightarrow 0,
    \]
    where \(Z_n \in \M\) and \(Z_n \sim \ND\left(0,  \o{\Upxi_n} \right)\).
\end{corollary}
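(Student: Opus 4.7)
The plan is to reduce the claim to Theorem~\ref{thm:CLT} by Fr\'echet-expanding $T^{\cdot}_{\cdot}$ around the diagonal and applying the continuous mapping theorem. By Lemma~\ref{lemma:def_T} with lower argument $Q_n$ and upper argument $Q_*$,
\[
d_{BW}^2(Q_n, Q_*) = \norm{(T_{Q_n}^{Q_*} - I)\, Q_n^{1/2}}_F^2.
\]
Since $T_{Q_*}^{Q_*} = I$, Fr\'echet differentiability of $Q \mapsto T_Q^{Q_*}$ at $Q_*$ (Lemma~\ref{lemma:taylor_T}) gives
\[
T_{Q_n}^{Q_*} - I = \dT{Q_*}{Q_*}(Q_n - Q_*) + R_n, \qquad \norm{R_n}_F = o\bigl(\norm{Q_n - Q_*}_F\bigr).
\]
Theorem~\ref{thm:CLT}(A) supplies $\norm{Q_n - Q_*}_F = O_P(n^{-1/2})$, so $\sqrt{n}\, R_n = o_P(1)$, and $Q_n^{1/2} - Q_*^{1/2} = O_P(n^{-1/2})$ by local Lipschitzness of the matrix square root on $\H_{++}(d)$.

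Combining these expansions and using that $\dT{Q_*}{Q_*}(H)$ is Hermitian whenever $H$ is --- so that $\norm{X Q_*^{1/2}}_F = \norm{Q_*^{1/2} X}_F$ for Hermitian $X$ --- one obtains
\[
\sqrt{n}\, d_{BW}(Q_n, Q_*) = \bigl\| Q_*^{1/2} \dT{Q_*}{Q_*}\bigl(\sqrt{n}(Q_n - Q_*)\bigr) \bigr\|_F + o_P(1).
\]
The continuous mapping theorem applied to the continuous functional $H \mapsto \norm{Q_*^{1/2} \dT{Q_*}{Q_*}(H)}_F$, together with Theorem~\ref{thm:CLT}(A), yields the first claim. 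For the data-driven statement, I would couple $Z_n$ and $Z$ through a common standard Gaussian $W \in \M$ via $Z_n = \o{\Upxi_n}^{1/2} W$, $Z = \o{\Upxi}^{1/2} W$; consistency of $Q_n$ for $Q_*$ (just shown) and of $\o{\Upxi_n}$ for $\o{\Upxi}$ (implicit in Theorem~\ref{thm:CLT}(B)), combined with continuity of $(Q, \Upsilon) \mapsto Q^{1/2} \dT{Q}{Q}(\Upsilon^{1/2} W)$, gives $\norm{Q_n^{1/2} \dT{Q_n}{Q_n}(Z_n)}_F \to \norm{Q_*^{1/2} \dT{Q_*}{Q_*}(Z)}_F$ in probability, hence convergence to zero in any metric $d_w$ inducing weak convergence.

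The main obstacle is the quantitative control of the Taylor remainder $R_n$ uniformly over a shrinking neighborhood of $Q_*$, which is precisely what Lemma~\ref{lemma:taylor_T} must furnish --- essentially a Lipschitz-differentiability bound on $Q \mapsto T_Q^{Q_*}$. For the data-driven half the added subtlety is verifying that both $Q_n \to Q_*$ and $\o{\Upxi_n} \to \o{\Upxi}$ propagate through the nonlinear map $\dT{Q}{Q}$; this again reduces to continuity of the differential established in Section~\ref{sec:proof_frechet}.
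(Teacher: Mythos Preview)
Your proof is correct and follows a route that is close in spirit to the paper's but uses a different starting identity. The paper invokes the second-order expansion of Lemma~\ref{lemma:quadratic_approx} (with $Q_0=S=Q_*$, $Q_1=Q_n$) to obtain
\[
d_{BW}^2(Q_n,Q_*)=\tfrac{1+o(1)}{2}\bigl\langle -\dT{Q_*}{Q_*}(Q_n-Q_*),\,Q_n-Q_*\bigr\rangle,
\]
and then passes to the limit via Theorem~\ref{thm:CLT}(A). You instead start from the exact identity $d_{BW}^2(Q_n,Q_*)=\norm{(T_{Q_n}^{Q_*}-I)Q_n^{1/2}}_F^2$ of Lemma~\ref{lemma:def_T} and apply only the first-order Taylor expansion (Lemma~\ref{lemma:taylor_T}) of $T_{\cdot}^{Q_*}$ at $Q_*$. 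Your path has the advantage of landing directly on the target quantity $\norm{Q_*^{1/2}\dT{Q_*}{Q_*}(Z)}_F$, whereas the paper's argument silently uses the algebraic identity $\tfrac12\langle -\dT{Q_*}{Q_*}(X),X\rangle=\norm{Q_*^{1/2}\dT{Q_*}{Q_*}(X)}_F^2$, which is true but not stated. For the data-driven half, both proofs reduce to the same three ingredients: $Q_n\to Q_*$, $\o{\hat\Upxi}_n\to\o{\Upxi}$, and $\dT{Q_n}{Q_n}\to\dT{Q_*}{Q_*}$; the paper obtains the last one from the monotonicity and homogeneity in Lemma~\ref{lemma:dT}(IV,V), which is exactly the ``continuity of the differential'' you invoke. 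One small remark: your worry about needing a \emph{uniform} remainder bound in Lemma~\ref{lemma:taylor_T} is unnecessary here---pointwise Fr\'echet differentiability at $Q_*$ together with $Q_n\to Q_*$ a.s.\ and $\sqrt n(Q_n-Q_*)=O_P(1)$ already forces $\sqrt n\,R_n=o_P(1)$.
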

To illustrate the result, we consider the case of
diagonal \(Q_*\). This setting allows us to write down 
the explicit form of the limiting distribution. If \(Q_* = \diag(q_1, ..., q_d)\), then
right-hand side of the above corollary for \(Z\)-case
is:
\[
\mathcal{L} \left(\sqrt{n} d_{BW}(Q_n, Q_*)\right)
\rightharpoonup 
\mathcal{L} \left(\sum_{i, j = 1}^d \frac{Z_{i j}^2}{2 (q_i + q_j)} \right),
\]
where \((Z_{i j})^{d}_{i, j = 1} = Z\).
All proofs are collected in Section~\ref{section:CLT_proof}.
Section~\ref{section:simulations} illustrates asymptotic behaviour of \(\mathcal{L}\left(\sqrt{n}\norm{Q_n - Q_*}_{F} \right) \) and \(\mathcal{L}\left(\sqrt{n} d_{BW}(Q_n, Q_*)\right)\) on both artificial and real data sets.

\subsection{Concentration of \(Q_n\)}
The next important issue is concentration 
properties of \(Q_n\) under
the assumption of sub-Gaussianity of \(\P\) (Assumption~\ref{asm:subgauss}).

\begin{theorem}[Concentration of \(Q_n'\)]
\label{theorem:concentration_Q}
    Let 
    \begin{equation}
        \label{def:Q'_n}
        Q'_n \eqdef Q^{-1/2}_* Q_n Q^{-1/2}_*.
    \end{equation}
    It holds under Assumptions~\ref{asm:subgauss} and~\ref{asm:main}, that
    \[
    \P\left\{\norm{Q_n' - I}_F 
    \ge \frac{c_Q}{\sqrt{n}} (d + t)\right\}
    \le 2 m e^{-n t_F} + e^{-t^2 / 2} + (1 - p)^n
    \]
    for any \(t \ge 0\) and \(n \ge n_0 \eqdef c_Q^2 (d + t)^2\),
    where 
    \begin{gather*}
        m \eqdef \dim(\M), \quad p \eqdef \P\bigl(\H_{++}(d)\bigr), \\
        c_Q \eqdef \frac{4 \norm{Q_*} \sigma_T}{\lmin(\o{F'})},
        \quad
        t_F \eqdef \CONST \min\left(\frac{\lmin(\o{F'})}{U \log^{1/2}\left(U / \sigma_F\right)}, \frac{\lmin^2(\o{F'})}{\sigma_F^2}\right),
    \end{gather*}
    operator \(\o{F'} \) is defined in~\eqref{def:F'}, \(\sigma_{T}\)  comes from Proposition~\ref{prop:concentr_T}, and \(\sigma_F\) and \(U\) are defined in Lemma~\ref{lemma:concentration_F}.
\end{theorem}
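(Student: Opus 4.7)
The plan is to work from the first-order optimality characterisations provided by Theorem~\ref{thm:uniqueness}: at the population level $\o{\Pi_\M}\E T_{Q_*}^S = \o{\Pi_\M} I$, and at the empirical level $\o{\Pi_\M}\frac{1}{n}\sum_{i=1}^n T_{Q_n}^{S_i} = \o{\Pi_\M} I$ (the latter requiring at least one $S_i \in \H_{++}(d)$, which fails with probability at most $(1-p)^n$). Subtracting these two identities and Taylor-expanding $T_Q^{S_i}$ around $Q = Q_*$ via Lemma~\ref{lemma:taylor_T} yields an implicit equation of the form
\[
\o{\Pi_\M}\Bigl(\tfrac{1}{n}\sum_{i} \dT{Q_*}{S_i}\Bigr)(Q_n - Q_*) + \o{\Pi_\M}\bar{R}_n = -\o{\Pi_\M}\Bigl(\tfrac{1}{n}\sum_{i} T_{Q_*}^{S_i} - I\Bigr),
\]
where $\bar R_n$ is the Taylor remainder, quadratic in $Q_n - Q_*$. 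After conjugating by $Q_*^{-1/2}$ and using the definition~\eqref{def:F'}, the linear operator on the left becomes an empirical version $\o{F'_n}$ of $\o{F'}$ acting on $Q_n' - I \in Q_*^{-1/2}\M Q_*^{-1/2}$, while the right-hand side becomes a centred empirical mean of iid sub-Gaussian terms.

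I would then decompose the failure probability into exactly the three terms in the bound. \textbf{Fluctuation term.} Proposition~\ref{prop:concentr_T} controls the sub-Gaussian norm of $T_{Q_*}^S - I$, so standard Gaussian-type concentration in the finite-dimensional subspace $\M$ gives $\bigl\|\o{\Pi_\M}\bigl(\tfrac1n\sum_i T_{Q_*}^{S_i} - I\bigr)\bigr\|_F \lesssim \sigma_T t/\sqrt n$ with probability at least $1 - e^{-t^2/2}$. \textbf{Hessian invertibility.} Lemma~\ref{lemma:concentration_F} gives $\|\o{F'_n} - \o{F'}\| \le \tfrac12 \lmin(\o{F'})$ with probability at least $1 - 2m e^{-n t_F}$, so that on this event $\o{F'_n}$ is invertible on $\M$ with $\lmin(\o{F'_n}) \ge \tfrac12\lmin(\o{F'})$. \textbf{Positivity event.} On the complement of $\{S_i \notin \H_{++}(d)\ \forall i\}$, whose probability is $(1-p)^n$, the empirical barycenter is positive definite and the linearisation above is valid. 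Inverting $\o{F'_n}$ on the intersection of these events then yields
\[
\|Q_n' - I\|_F \le \frac{2\|Q_*\|\,\sigma_T t}{\lmin(\o{F'})\sqrt n} + \frac{2\|Q_*\|}{\lmin(\o{F'})}\bigl\|\o{\Pi_\M}\bar R_n\bigr\|_F,
\]
where the $\|Q_*\|$ factor comes from the change of variables. Combined with $\|\bar R_n\|_F \lesssim \|Q_n' - I\|_F^2$ from the second-order expansion of $T_Q^{S_i}$ in Lemma~\ref{lemma:taylor_T}, this gives the claimed rate once the dimension-dependent contribution $d$ is read off (it arises from bounding the $m \le d^2$ linearly independent directions of $\M$ simultaneously when turning the tail bound on the fluctuation into an in-probability bound on its Frobenius norm).

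The principal obstacle will be \emph{closing the self-bounding inequality} without losing the $1/\sqrt n$ rate. The Taylor expansion of $T_Q^S$ is genuinely local, valid only on an event where $Q_n$ is already close to $Q_*$, so one must bootstrap: use the crude bound from Theorem~\ref{thm:CLT} (or a coarse preliminary estimate) to show $\|Q_n' - I\|_F \le \tfrac12$, then plug this into the quadratic remainder to obtain the sharp inequality $\|Q_n' - I\|_F \le A + B\,\|Q_n' - I\|_F^2$ with $A = O((d+t)/\sqrt n)$ and $B = O(1)$. This admits the desired solution only when $4AB \le 1$, which is precisely the hypothesis $n \ge n_0 = c_Q^2(d+t)^2$ appearing in the statement. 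The bookkeeping of constants through the change of variables by $Q_*^{-1/2}$ and through the anisotropic sub-Gaussian concentration of $\dT{Q_*}{S_i}$ (used to control $\o{F'_n}$) is technical but routine once Proposition~\ref{prop:concentr_T} and Lemma~\ref{lemma:concentration_F} are in hand.
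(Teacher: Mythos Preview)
Your overall architecture—first-order optimality at $Q_*$ and $Q_n$, three-event decomposition (fluctuation of $\bar T_n$, invertibility of the empirical Hessian, positivity of $Q_n$), and a self-bounding inequality closed under the hypothesis $n\ge n_0$—matches the paper's proof exactly. The difference lies in how the nonlinear remainder is handled, and there your plan is somewhat harder than what the paper actually does.

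You propose to bound the Taylor remainder in Frobenius norm, $\|\bar R_n\|_F \lesssim \|Q_n'-I\|_F^2$, and then invert $\o{F'_n}$. The hidden constant in that $\lesssim$ scales like $\frac{1}{n}\sum_i\|S_i\|^{1/2}$ (since $\dT{Q}{S}$ is $\tfrac12$-homogeneous in $S$), so it is random and needs its own concentration step; and the quadratic bound is only local, which is why you anticipate a bootstrapping argument. The paper sidesteps both issues by never bounding $\bar R_n$ in norm. Instead it takes the inner product of the optimality equation with $Q_n-Q_*$ and uses Lemma~\ref{lemma:integral_form}: the monotonicity and homogeneity of $\dT{Q}{S}$ give
\[
\bigl\langle T_{Q_*}^S - T_{Q_n}^S,\, Q_n-Q_*\bigr\rangle \ge \frac{1}{1+\tfrac34\|Q_n'-I\|}\,\bigl\langle -\dT{Q_*}{S}(Q_n-Q_*),\, Q_n-Q_*\bigr\rangle
\]
\emph{uniformly in $S$}. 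Averaging over $i$ then produces a multiplicative correction to $\langle \o{F'_n}(Q_n'-I),Q_n'-I\rangle$ rather than an additive remainder, and the resulting scalar inequality $\|Q_n'-I\|_F \le (1+\tfrac34\|Q_n'-I\|)\eta_n$ (this is Lemma~\ref{lemma:bound_Q'}) holds globally whenever $Q_n\succ 0$—no preliminary crude bound on $Q_n'$ is needed. So your bootstrapping worry disappears, and the $S$-dependence is absorbed into $\o{F'_n}$ itself, whose concentration you already control via Lemma~\ref{lemma:concentration_F}.

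In short: your route would work, but you would have to add a concentration bound for $\frac{1}{n}\sum_i\|S_i\|^{1/2}$ and a genuine a-priori localisation of $Q_n$. The paper's quadratic-form trick via Lemma~\ref{lemma:integral_form} buys you both of those for free.
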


Concentration of \(d_{BW}(Q_n, Q_*) \) is a corollary of the above theorem.

\begin{corollary}[Concentration of \(Q_n\) in \(d_{BW}\) distance]
\label{corollary:concantration_in_dbw}
Under conditions of Theorem~\ref{theorem:concentration_Q}
the following result holds
\[
\P \left\{d_{BW}(Q_n, Q_*) \ge \tfrac{c_Q \norm{Q_*}^{1/2}}{\sqrt{n}} (d + t) \right\} \le 2 m e^{-n t_F} + e^{-t^2/2} + (1 - p)^n.
\] 
\end{corollary}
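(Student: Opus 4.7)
The plan is to pass directly from the Frobenius bound of Theorem~\ref{theorem:concentration_Q} on \(Q_n' = Q_*^{-1/2} Q_n Q_*^{-1/2}\) to a \(d_{BW}\) bound on \((Q_n, Q_*)\) through a purely deterministic inequality of the form
\[
d_{BW}(Q_n, Q_*) \le \norm{Q_*}^{1/2} \, \norm{Q_n' - I}_F.
\]
Once this is established the corollary is immediate: the event \(\{d_{BW}(Q_n, Q_*) \ge c_Q \norm{Q_*}^{1/2} (d+t)/\sqrt{n}\}\) is contained in \(\{\norm{Q_n' - I}_F \ge c_Q (d+t)/\sqrt{n}\}\), whose probability is controlled by Theorem~\ref{theorem:concentration_Q}.

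To prove the inequality I would exploit Lemma~\ref{lemma:def_T}, which characterises \(d_{BW}^2(Q_*, Q_n)\) as the \emph{minimum} of \(\norm{(T-I) Q_*^{1/2}}_F^2\) over all \(T\) with \(T Q_* T^* = Q_n\); any admissible competitor therefore yields an upper bound. A convenient choice is
\[
T \eqdef Q_*^{1/2} (Q_n')^{1/2} Q_*^{-1/2},
\]
which is well-defined because \(Q_* \succ 0\) by Theorem~\ref{thm:uniqueness} and \(Q_n' \succeq 0\) as a congruence of the PSD matrix \(Q_n\). A direct computation shows \(T Q_* T^* = Q_*^{1/2} (Q_n')^{1/2} (Q_n')^{1/2} Q_*^{1/2} = Q_*^{1/2} Q_n' Q_*^{1/2} = Q_n\), so this \(T\) is admissible.

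Next I would compute \((T - I) Q_*^{1/2} = Q_*^{1/2}\bigl((Q_n')^{1/2} - I\bigr)\), giving the suboptimal bound
\[
d_{BW}^2(Q_*, Q_n) \le \bigl\|Q_*^{1/2}\bigl((Q_n')^{1/2} - I\bigr)\bigr\|_F^2 \le \norm{Q_*} \cdot \bigl\|(Q_n')^{1/2} - I\bigr\|_F^2
\]
by the standard submultiplicativity \(\norm{AB}_F \le \norm{A}\, \norm{B}_F\). The last step is to pass from the square root to the matrix itself: since \(Q_n'\) is self-adjoint and PSD, diagonalising in its eigenbasis and using the scalar inequality \((\sqrt{\mu} - 1)^2 \le (\mu - 1)^2\) for \(\mu \ge 0\) (which follows from \((\sqrt{\mu}+1)^2 \ge 1\)) yields \(\norm{(Q_n')^{1/2} - I}_F \le \norm{Q_n' - I}_F\). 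Combining these three estimates gives the desired deterministic bound.

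The proof is essentially mechanical once the right admissible \(T\) is identified; the only mildly delicate point is choosing \(T\) so that the resulting norm naturally factors through \(Q_*^{1/2}\) and the variable \(Q_n'\) appearing in Theorem~\ref{theorem:concentration_Q}, rather than through \(Q_n^{1/2}\) or \(T_{Q_*}^{Q_n}\), which would force an additional spectral estimate. No probabilistic work beyond invoking Theorem~\ref{theorem:concentration_Q} is needed.
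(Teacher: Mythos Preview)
Your proof is correct and arrives at the same deterministic inequality
\[
d_{BW}(Q_n, Q_*) \le \norm{Q_*}^{1/2}\,\norm{Q_n' - I}_F
\]
as the paper, after which both proofs simply invoke Theorem~\ref{theorem:concentration_Q}. The route, however, is genuinely different. The paper obtains the inequality via the second-order machinery: it applies the quadratic approximation of Lemma~\ref{lemma:quadratic_approx} with \(Q_0 = S = Q_*\), \(Q_1 = Q_n\), rewrites the quadratic form through the rescaled operator \(\dt{Q_*}{Q_*}\), and then uses Corollary~\ref{corollary:dt} to bound \(\lmax(-\dt{Q_*}{Q_*}) = \tfrac{1}{2}\lmax(Q_*)\), yielding \(d_{BW}^2(Q_n,Q_*)\le \norm{Q_*}\,\norm{Q_n'-I}_F^2\). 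You instead exploit the variational characterisation in Lemma~\ref{lemma:def_T} directly, plugging in the admissible competitor \(T = Q_*^{1/2}(Q_n')^{1/2}Q_*^{-1/2}\) and finishing with the elementary scalar bound \((\sqrt{\mu}-1)^2 \le (\mu-1)^2\). Your argument is shorter and entirely avoids the differential calculus of \(\dT{Q}{S}\) and \(\dt{Q}{S}\); the paper's version, on the other hand, reuses tools already developed for the CLT and gives the result as a by-product of that framework. Both are clean; yours is the more self-contained.
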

Proofs are collected in Section~\ref{sec:concentration_proof}.

\subsection{Central limit theorem and concentration for \(\mathcal{V}_n\)}
In this section we investigate properties of the
Fr\'echet variance \(\mathcal{V}_n\), defined
in~\eqref{def:barn_aff}.
The next theorem presents central limit theorem for empirical variance \(\mathcal{V}_n\).

\begin{theorem}[Central limit theorem for \(\mathcal{V}_n\)]
\label{theorem:CLT_V}
Let \(\P\) be s.t.\ \(\E (\tr S)^2 < \infty\) and \(\P\left(\H_{++}(d)\right) = 1\). Then
\[
\sqrt{n} \left(\mathcal{V}_n - \mathcal{V}_*\right) \rightharpoonup \ND\left(0, \var d_{BW}^2(Q_*, S)\right).
\]
\end{theorem}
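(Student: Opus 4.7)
The plan is to split the scaled error into a plug-in part and an optimisation-gap part,
\[
\sqrt{n}(\mathcal{V}_n - \mathcal{V}_*) = \sqrt{n}\bigl(\mathcal{V}_n(Q_*) - \mathcal{V}_*\bigr) + \sqrt{n}\bigl(\mathcal{V}_n(Q_n) - \mathcal{V}_n(Q_*)\bigr),
\]
show that the first summand converges weakly to the advertised Gaussian law, verify that the second summand is $o_P(1)$, and conclude by Slutsky's theorem.

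First I would handle the leading piece by applying the classical univariate CLT to the i.i.d.\ scalar sample $\xi_i \eqdef d_{BW}^2(Q_*, S_i)$, whose common mean is $\mathcal{V}_*$. From~\eqref{def:Bures} one has the elementary bound $d_{BW}^2(Q_*, S_i) \le \tr Q_* + \tr S_i$ (the cross term being non-negative), so $\E \xi_i^2 \le 2 (\tr Q_*)^2 + 2\,\E (\tr S)^2 < \infty$ under the hypothesis $\E (\tr S)^2 < \infty$, and the CLT yields $\sqrt{n}\bigl(\mathcal{V}_n(Q_*) - \mathcal{V}_*\bigr) \rightharpoonup \ND\bigl(0,\var d_{BW}^2(Q_*, S)\bigr)$.

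Second, I would bound the optimisation gap via a second-order Taylor expansion of $\mathcal{V}_n$ at $Q_*$ along the direction $Q_n - Q_* \in \M$:
\[
\mathcal{V}_n(Q_n) - \mathcal{V}_n(Q_*) = \bigl\langle \nabla \mathcal{V}_n(Q_*),\, Q_n - Q_* \bigr\rangle + \tfrac{1}{2} \bigl\langle \nabla^2 \mathcal{V}_n(\tilde Q)(Q_n - Q_*),\, Q_n - Q_* \bigr\rangle
\]
for some $\tilde Q$ on the segment joining $Q_*$ and $Q_n$. Using the differential $\nabla_Q d_{BW}^2(Q, S) = I - T_Q^S$ behind Lemma~\ref{lemma:def_T}, the projected gradient reads $\o{\Pi_\M}\bigl(I - \tfrac{1}{n}\sum_i T_{Q_*}^{S_i}\bigr)$, whose mean vanishes by the fixed-point characterisation~\eqref{def:uniqueness}; a matrix CLT then gives $\|\o{\Pi_\M}\nabla \mathcal{V}_n(Q_*)\|_F = O_P(n^{-1/2})$. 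Combining this with the parametric rate $\|Q_n - Q_*\|_F = O_P(n^{-1/2})$ supplied by Theorem~\ref{thm:CLT}, the linear term is $O_P(n^{-1})$; the quadratic term is of the same order once the empirical Hessian is shown to be bounded on a shrinking neighbourhood of $Q_*$, using the differentiability properties of $T_Q^S$ from Section~\ref{sec:proof_frechet}. Hence $\mathcal{V}_n - \mathcal{V}_n(Q_*) = O_P(n^{-1}) = o_P(n^{-1/2})$.

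The step I expect to be the main obstacle is this last uniform control of the Hessian, because the second derivative of $d_{BW}^2(Q, S)$ in $Q$ involves $\dT{Q}{S}$, which degenerates as $S$ approaches the boundary of $\H_{++}(d)$. The assumption $\P\bigl(\H_{++}(d)\bigr) = 1$ together with $\E (\tr S)^2 < \infty$ permits a truncation argument restricting the analysis to a high-probability event on which all relevant $S_i$ stay away from the boundary and $\tilde Q$ remains in a small neighbourhood of $Q_* \succ 0$; a moment bound over this event, in the spirit of the concentration arguments of Section~\ref{sec:concentration_proof}, closes the estimate.
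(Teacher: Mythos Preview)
Your decomposition and the treatment of the plug-in term are exactly the paper's, and your plan for the optimisation gap is correct in spirit but unnecessarily cautious. The obstacle you flag---degeneracy of \(\dT{\tilde Q}{S}\) as \(S\) approaches \(\partial\H_{++}(d)\)---is not actually an obstacle: by Lemma~\ref{lemma:dT}(III) the quadratic form \(-\langle \dT{Q}{S}(X),X\rangle\) is bounded \emph{above} by \(\tfrac12\lmax^{1/2}(S^{1/2}QS^{1/2})\,\|Q^{-1/2}XQ^{-1/2}\|_F^2\le C(Q)\,\|S\|^{1/2}\|X\|_F^2\), so the empirical Hessian at any \(\tilde Q\) near \(Q_*\) has norm controlled by \(\tfrac1n\sum_i\|S_i\|^{1/2}\), which is \(O_P(1)\) already under \(\E\tr S<\infty\); no truncation is needed (it is only the \emph{lower} bound on \(-\dT{Q}{S}\) that degenerates). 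The paper streamlines this step further by invoking the two-sided quadratic bound of Lemma~\ref{lemma:quadratic_approx} directly, obtaining
\[
0\le \mathcal{V}_n(Q_*)-\mathcal{V}_n(Q_n)\le \tfrac{2}{(1+\lmin^{1/2}(Q_n'))^2}\,\langle \o{F}_n(Q_n-Q_*),\,Q_n-Q_*\rangle,
\]
where the prefactor depends only on \(Q_n'\to I\) and \(\o{F}_n\to\o{F}\) by the law of large numbers; combined with \(\sqrt{n}(Q_n-Q_*)=O_P(1)\) from Theorem~\ref{thm:CLT} this gives \(O_P(1/n)\) without separating linear and quadratic pieces or worrying about an intermediate point.
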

The last important result of the current study describes concentration properties of \(\mathcal{V}_n\).

\begin{theorem}[Concentration of \(\mathcal{V}_n\)]
\label{theorem:concentration_V}
    Let Assumption~\ref{asm:subgauss} be fulfilled. Then under conditions of Theorem~\ref{theorem:CLT_V} it holds:
    \begin{align*}
    \P\left\{\left|\mathcal{V}_n - \mathcal{V}_* \right|
    \ge z(b, \nu, d, n ,t)\right\}
    \le 2 m e^{-n t_F} + 3 e^{-t^2 / 2} + (1 - p)^n
    \end{align*}
    with 
    \[
    z(b, \nu, d, n ,t) \eqdef \max\left(\tfrac{b t^2}{n}, \tfrac{\nu t}{\sqrt{n}}\right) + 3  \tfrac{c_Q^2 \norm{\o{F'}} }{n} (d + t)^2.
    \]
    There\((\nu, b)\) are parameters of 
    sub-exponential r.v.\ \(d^2_{BW}(Q_*, S)\).
\end{theorem}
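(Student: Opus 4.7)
The plan is to split the quantity of interest into a stochastic-fluctuation part evaluated at the population minimizer and an optimization-gap part measuring how much $Q_n$ differs from $Q_*$:
\[
\mathcal{V}_n - \mathcal{V}_*
= \bigl[\mathcal{V}_n(Q_*) - \mathcal{V}(Q_*)\bigr]
+ \bigl[\mathcal{V}_n(Q_n) - \mathcal{V}_n(Q_*)\bigr].
\]
The two pieces require different tools and will be combined by a union bound at the end.

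For the first piece, observe that $\mathcal{V}_n(Q_*)-\mathcal{V}_* = n^{-1}\sum_{i=1}^n \bigl(d_{BW}^2(Q_*,S_i)-\mathcal{V}_*\bigr)$ is the empirical mean of i.i.d.\ centered random variables. From definition~\eqref{def:Bures} one has $d_{BW}^2(Q_*,S)\le \tr Q_* + \tr S$, and by Assumption~\ref{asm:subgauss} $\tr S$ is sub-exponential, so $d_{BW}^2(Q_*, S)$ is sub-exponential with some parameters $(\nu,b)$. A Bernstein inequality then yields
\[
\P\Bigl\{\bigl|\mathcal{V}_n(Q_*)-\mathcal{V}_*\bigr|\ge \max\bigl(\tfrac{bt^2}{n},\tfrac{\nu t}{\sqrt n}\bigr)\Bigr\}\le 2e^{-t^2/2}.
\]

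For the second piece I would exploit the optimality of $Q_n$. Working in the rescaled coordinates $Q'=Q_*^{-1/2}QQ_*^{-1/2}$ already used in Theorem~\ref{theorem:concentration_Q}, the $\M$-projected gradient of $\mathcal{V}_n$ vanishes at $Q_n$, so a second-order Taylor expansion around $Q_n$ gives
\[
\mathcal{V}_n(Q_*)-\mathcal{V}_n(Q_n)
= \tfrac12\bigl\langle I-Q_n',\, \o{F'_n}\,(I-Q_n')\bigr\rangle + R,
\]
where $\o{F'_n}$ is the empirical Hessian (close to $\o{F'}$ via the concentration lemma invoked in the proof of Theorem~\ref{theorem:concentration_Q}) and $R$ is a third-order remainder controlled via the Fr\'echet differentiability of $T_Q^S$ in its lower argument (Lemma~\ref{lemma:def_T} and the material of Section~\ref{sec:proof_frechet}). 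On the event of Theorem~\ref{theorem:concentration_Q} one has $\norm{I-Q_n'}_F\le c_Q(d+t)/\sqrt n$, and on the same event $\norm{\o{F'_n}}\le 2\norm{\o{F'}}$; for $n\ge n_0$ the cubic remainder is of strictly smaller order and can be absorbed into the constant, yielding
\[
\bigl|\mathcal{V}_n(Q_n)-\mathcal{V}_n(Q_*)\bigr| \le 3\,c_Q^2\norm{\o{F'}}(d+t)^2/n
\]
with probability at least $1-2me^{-nt_F}-e^{-t^2/2}-(1-p)^n$.

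Adding the Bernstein failure probability $2e^{-t^2/2}$ to the failure probability of Theorem~\ref{theorem:concentration_Q} produces exactly the target bound $2me^{-nt_F}+3e^{-t^2/2}+(1-p)^n$, with the sum of the two deterministic bounds equal to $z(b,\nu,d,n,t)$ as stated. The main obstacle is the uniform control of the Taylor remainder $R$ on the random set $\{\norm{I-Q_n'}_F\le c_Q(d+t)/\sqrt n\}$: one needs a uniform operator bound on the third Fr\'echet derivative of $Q\mapsto d_{BW}^2(Q,S)$ averaged against $\P$, and this is precisely where the quantitative smoothness of $T_Q^S$ developed in Lemma~\ref{lemma:taylor_T} enters. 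The rest of the argument is essentially bookkeeping of constants.
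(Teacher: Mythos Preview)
Your decomposition and the sub-exponential/Bernstein treatment of $\mathcal{V}_n(Q_*)-\mathcal{V}_*$ (with the reparametrisation $t'=t^2/2$) coincide with the paper's proof, and the union-bound arithmetic is correct.

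The only point where you diverge is the optimization-gap piece $\mathcal{V}_n(Q_*)-\mathcal{V}_n(Q_n)$. You propose a second-order Taylor expansion at $Q_n$ and then flag control of the cubic remainder $R$ as ``the main obstacle.'' The paper avoids this obstacle altogether: instead of a local Taylor expansion it applies the \emph{global} two-sided quadratic sandwich of Lemma~\ref{lemma:quadratic_approx}. Since the linear term vanishes by the first-order optimality condition at $Q_n$, one obtains directly
\[
0 \;\le\; \mathcal{V}_n(Q_*) - \mathcal{V}_n(Q_n)
\;\le\; \tfrac{2}{\bigl(1+\lambda_{\min}^{1/2}(Q_n')\bigr)^{2}}\,
\bigl\langle \o{F}_n(Q_n-Q_*),\,Q_n-Q_*\bigr\rangle
\;\le\; 2\,\norm{\o{F'}_n}\,\norm{Q'_n-I}_F^2,
\]
with no higher-order term whatsoever. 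On the event from Theorem~\ref{theorem:concentration_Q} one has $\norm{Q'_n-I}_F\le c_Q(d+t)/\sqrt{n}$ and $\norm{\o{F'}_n-\o{F'}}\le\tfrac12\lambda_{\min}(\o{F'})$, hence $\norm{\o{F'}_n}\le\tfrac32\norm{\o{F'}}$, and the bound $3\,c_Q^2\norm{\o{F'}}(d+t)^2/n$ follows immediately. So your plan is sound, but the third-derivative control you anticipate is unnecessary once Lemma~\ref{lemma:quadratic_approx} replaces the local Taylor expansion.
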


Proofs of these two theorems are collected in Section~\ref{sec:clt_V_proof}.

\section{Connection to other problems}
\label{section:examples}
In this section we explain the connection of obtained results to some other problems. Section~\ref{section:wasserstein} investigates 
the relation between Bures-Wasserstein barycenter and 2-Wasserstein barycenter of some scale-location family.
Section~\ref{section:quantum} illustrates the idea of search of
a barycenter on some affine subspace \(\A \subset \H(d) \). 

\subsection{Connection to scale-location families of measures}
\label{section:wasserstein}
We first present the concept of a scale-location family
of absolutely continuous
measures supported on \(\R^d\).
\begin{definition}
\label{def:SL_family}
Let \(X \sim \mu\) be a random variable
that follows law \(\mu\):
\(\mu \in \mathcal{P}^{ac}_{2}(\R^d) \), where
\(\mathcal{P}^{ac}_{2}(\R^d) \) is a set of all
continuous measures with finite second moment. A set of all affine transformations of \(X\) is
\[
\mathcal{SL}(\mu) \eqdef \Bigl\{\mathcal{L}\bigl(P X + p \bigr)
:~
P \in \sym_+(d),~p \in \R^d
\Bigr\}.
\]
It is referred to as a scale-location family.
\end{definition}
Scale-location families attract lots of attention in modern data analysis 
and appear in many practical applications, 
as this concept is user-friendly in terms of theoretical analysis and, at the same time, 
possess very high modelling power. 
For example, it is widely used in
medical imaging~\cite{wassermann2010unsupervised}, modelling of molecular dynamic~\cite{gonzalez2017absolute},
clustering procedures~\cite{del2017robust},
climate modelling~\cite{fragen2017learning}, embedding of complex objects 
in low dimensional spaces~\cite{muzellec2018generalizing} and so on.

A possible metric that takes into account non-linearity of the underlying data-set is \(2\)-Wasserstein distance \(d_{W_2}\). 
Let \(\mu_1, \mu_2\) be elements of \( \mathcal{SL}(\mu)\) and let \(X \sim \mu_{1}\),
\(Y \sim \mu_2 \).
We denote their first and second moments as
\begin{equation}
\label{def:moments}
\E X = m_1, 
\quad
\E Y = m_2,
\quad 
\var(X) = S_1,
\quad
\var(Y) = S_2.
\end{equation}
It is a well-known fact,
that in case of scale-location families \(d_{W_2}\) depends 
only on the first and second moments of observed measures:
\[
d^2_{W_2}(\mu_1, \mu_2) = \norm{m_1 - m_2}^2 + d^2_{BW}(S_1, S_2).
\]
For more details on general class of optimal transportation
distances we recommend
excellent books~\cite{ambrosio2013user} or \cite{villani_optimal_2009}.

\paragraph{Distribution over scale-location family}
In many cases we are interested in scale-location families 
generated at random. Let \(\P\) be a probability measure 
supported on some \(\mathcal{SL}(\mu)\).
And let \(\left(\Omega, F, \P \right)\) be a generic 
probability space, s.t. for any \(\omega \in \Omega\)
there exists an image in \(\mu_{\omega} \eqdef \mathcal{L}\bigl(P_{\omega}X + p_{\omega} \bigr)\), where
\(P_{\omega} \in \sym_+(d)\) is a scaling parameter and 
\(p_{\omega} \in \R^d\) is a shift parameter. 
A randomly sampled measure \(\mu_{\omega}\) belongs to \(\mathcal{SL}(\mu) \) by construction,
and its first and second moments \((m_{\omega}, S_{\omega})\)
are written as
\[
m_{\omega} \eqdef P_{\omega}r + p_{\omega},
\quad
S_{\omega} \eqdef P_{\omega} Q P^{\T}_{\omega},
\]
where \((r, Q)\) denote
the first and the second moments of \(\mu\).

Fr\'echet variance of \(\P\) at any arbitrary point \(\mu\) is written as
\[
\mathcal{V}(\mu) \eqdef \int_{\text{supp}(\P)}d^2_{W_2}(\mu, \nu_{\omega})\P(d\omega).
\]
Given an  i.i.d. sample \(\nu_1,..., \nu_n\) from \(\P\), 
we define an empirical analogon of \(\mathcal{V}\):
\[
\mathcal{V}_n(\mu) \eqdef \frac{1}{n}\sum^n_{i=1}d^2_{W_2}(\mu, \nu_i).
\]
Then population and empirical barycenters \(\mu_*\) and \(\mu_n\)
are
\[
\mu_* = \argmin_{\mu \in \mathcal{P}_2(\R^d)} \mathcal{V}(\mu), 
\quad
\mu_n = \argmin_{\mu \in \mathcal{P}_2(\R^d)} \mathcal{V}_n(\mu).
\]
Note, that
\(\mu_*\) and \(\mu_n\) belong to \(\mathcal{SL}(\mu)\)
and are uniquely characterised by their first and second moments \((r_*, Q_*)\) and \((r_n, Q_n)\) respectively, 
see e.g. {Theorem 3.10}~\cite{alvarez2011wide}:
\begin{equation}
\label{def:r*}
 r_* = \int_{\text{supp}(\P)}m_{\omega}\P(d\omega), 
\quad
Q_* = \int_{\text{supp}(\P)}\left(Q^{1/2}_* S_{\omega}Q^{1/2}_*\right)^{1/2}\P(d\omega),   
\end{equation}

\begin{equation}
\label{def:rn}
r_n = \frac{1}{n}\sum^n_{i = 1}m_i,
\quad
Q_n = \frac{1}{n}\sum^n_{i = 1}\left(Q^{1/2}_n S_{i}Q^{1/2}_n\right)^{1/2}.    
\end{equation}

It is worth noting that the concept of Wasserstein barycenter originally presented in
a seminal work by ~\cite{agueh2011barycenters}
becomes a topic of extensive scientific interest in the last few years.
A work~\cite{bigot2012consistent} focuses on convergence of
parametric class of barycenters, while~\cite{bigot2017penalized} 
investigate asymptotic properties of regularised barycenters.
The most general results on limiting distribution of convergence of empirical barycenters 
are obtained in~\cite{ahidar2018rate}. 
This work provides rates of convergence for empirical barycentres of a Borel probability
measure on a metric space either under assumptions on
weak curvature constraint of the underlying space or for a case of a non-negatively curved space on which geodesics, emanating from a barycenter, can be extended.
Theorem~\ref{thm:CLT}
specifies the results, obtained in~\cite{ahidar2018rate}
for the case of scale-location families.
Corollary~\ref{thm:asymptotic} partially answers an (implicit) question,
raised by work\\ \mbox{\cite{gouic_existence_2015}}, 
concerned the rate of convergence
of \(d_{W_2}(\mu_n, \mu_*)\).
Namely, for the case of scale-location families
it is of order \(\frac{1}{\sqrt{n}}\).
However, the above mentioned work covers only~\eqref{def:bar} case.
The paper~\cite{kroshnin2017fr} obtains an analog of law of large numbers for the case of arbitrary cost functions for barycenters on some affine sub-space \(\A\)~\eqref{def:bar_aff}.
A result, similar in spirit to Theorem~\ref{theorem:concentration_V}
is obtained in~\cite{barrio2016central}. 
However, there authors consider only the space of probability measures supported on the real line (i.e. \(d = 1 \)) endowed
with 2-Wasserstein distance.
To the best of our knowledge, there are no results similar to concentration Theorem~\ref{theorem:concentration_Q} and Theorem~\ref{corollary:concantration_in_dbw} in case of 2-Wasserstein distance.

\subsection{Connection to quantum mechanics}
\label{section:quantum}
This section illustrates the idea of barycenter restricted to some affine sub-space \(\A\). We first briefly recall the concept of quantum densities. 
Quantum density operator is used in quantum mechanics 
as a possible way of description of statistical state of a quantum system.
It might be considered as an analogue to a phase-space density in classical statistical mechanics. The formalism was introduced by John von Neumann in 1927.  
In essence a density matrix \(\rho\) is a Hermitian positive semi-definite operator 
with the unit trace, \(\rho \in \H_{+}(d)\), \(\tr \rho = 1\).

Given a random ensemble of density
matrices, one is able to recovery the mean using averaging in classical Euclidean sense. 
However, Bures-Wasserstein barycenter suggests an alternative way
to define the ``most typical'' representant~\eqref{def:bar_aff}
in terms of fidelity measure~\eqref{def:fidelity}.
We consider a following statistical setting. 
Let \((\Omega, F, \P)\) be some mechanism 
which generates quantum 
states \(\rho_{\omega}\). 
Given an i.i.d sample \(\rho_1,..., \rho_n\) we write population and empirical variance of \(\P\) as
\[
\mathcal{V}(\sigma) = \int_{\text{supp}(\P)}d^2_{BW}(\sigma, \rho_{\omega})\P(d\omega),
\quad
\mathcal{V}_n(\sigma) = \frac{1}{n}\sum^n_{i = 1}d^2_{BW}(\sigma, \rho_i).
\]
Then
population and empirical barycenters in the class of
all \(d \times d\)-dimensional density operators are defined as
\[
\rho_* = \argmin_{\sigma:~ \tr \sigma = 1} \mathcal{V}(\sigma),
\quad
\rho_n = \argmin_{\sigma:~ \tr \sigma = 1} \mathcal{V}_n(\sigma).
\]
It can be easily shown, that ``taking global Fr\'echet barycenter'' or, in other words neglecting the condition \(\sigma:~ \tr \sigma = 1\), we end up with the global baryceneter, which is the solution of the fixed point equation which is already mentioned in Section~\ref{section:main}: \(
\rho = \int \left(\rho^{1/2}\rho_{\omega} \rho^{1/2} \right)^{1/2}\P(d\omega)\). This is a contraction mapping. Thus \(\tr \rho_* < 1\) and \(\rho_*\) is not a density operator. In other words
condition \(\tr \sigma = 1\) ensures, that \(\rho_*\) and \(\rho_n\) also belong to the class of density operators.
Taking into account the results obtained in Section~\ref{section:main}, \(\rho_n\) is a natural
consistent estimator of \(\rho_*\) with known
rate of convergence and deviation properties. 

\section{Experiments on simulated and real data sets}
\label{section:simulations}

\subsection{Simulated data}
In this section we consider a simulated
data set.
So as to generate a covariance matrix \(S = U^* \Lambda U\), \(S \in \sym_{++}(d) \)
we generate at random an orthogonal
matrix \(U\) and a diagonal matrix \(\Lambda = \text{diag}(\lambda_1,..., \lambda_d)\),
s.t. \(\lambda_i \sim \text{Unif}[18, 22] \).
The following images Fig.\ref{fig:fnorm} - Fig.\ref{fig:var}
illustrate convergence of \(\mathcal{L}\left(\sqrt{n}\| Q_n - Q_*\|_{F}\right)\), 
\(\mathcal{L}\left(\sqrt{n}d_{BW}(Q_n, Q_*)\right)\), and \(\mathcal{L}\left(\sqrt{n}(\mathcal{V}_* - \mathcal{V}_n)\right) \) 
presented in Theorem~\ref{thm:CLT}, Corollary~\ref{thm:asymptotic}, and Theorem~\ref{theorem:CLT_V} respectively. The following numerical experiments were performed using \({\textsf{R}} \).
The population barycenter \(Q_*\) was computed using a sample of \(20 000\) observed covariance matrices.
A solid line depicts the density of a limiting distribution, 
whereas dashed lines correspond to densities for
different sample sizes for Bures-Wasserstein barycenter
\(Q_n\) with \(n = \{3, 10, 100, 1000\}\).
Simulation were carried out for matrices of size \(d = 5\) and \(d = 10\).

\begin{figure}[h!]
	\centering
		\includegraphics[width=0.9\textwidth]{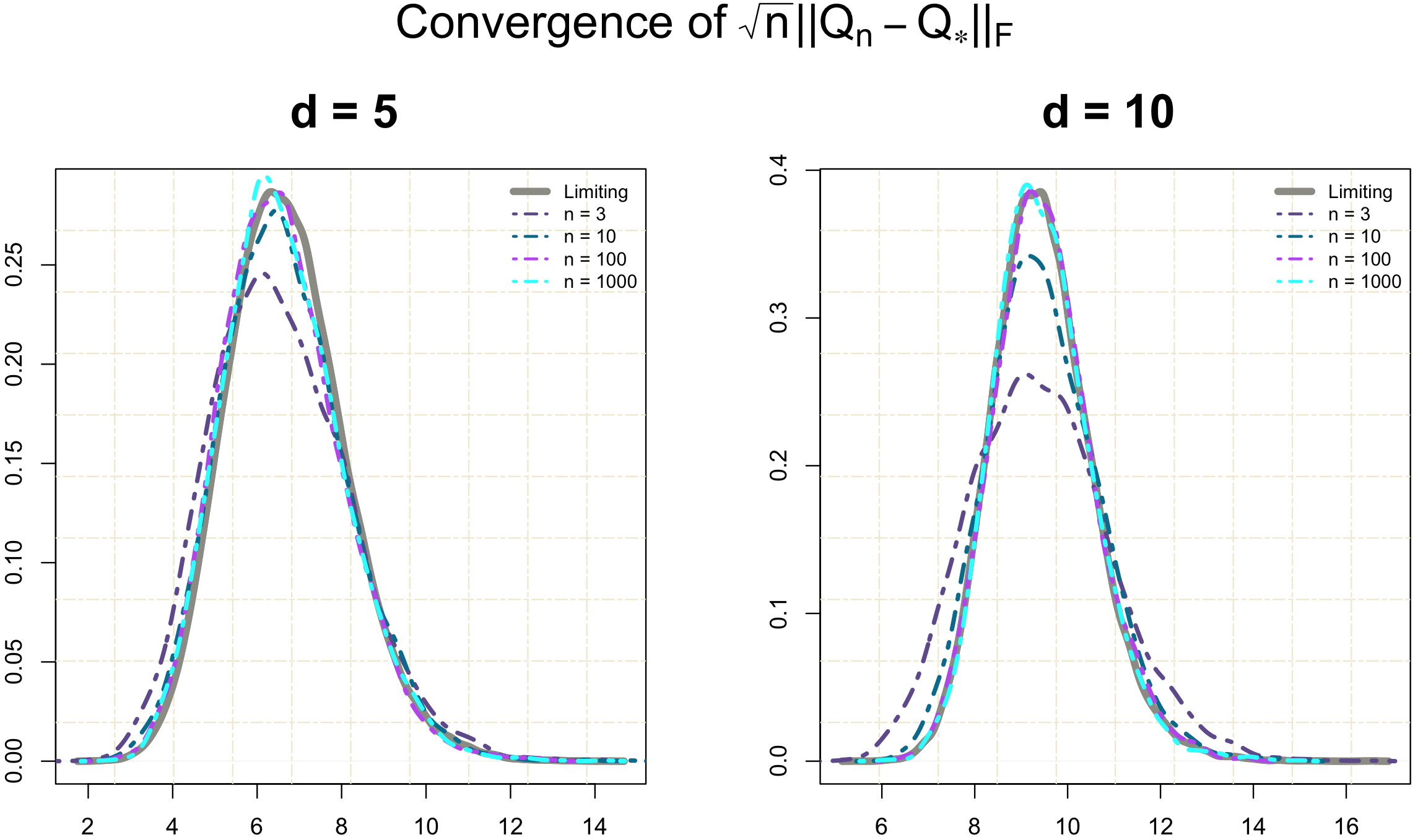}
	\caption{Densities of \(\mathcal{L}(\sqrt{n}\norm{Q_n - Q_*}_{F})\)}
	\label{fig:fnorm}
\end{figure}

\begin{figure}[h!]
	\centering
		\includegraphics[width=0.9\textwidth]{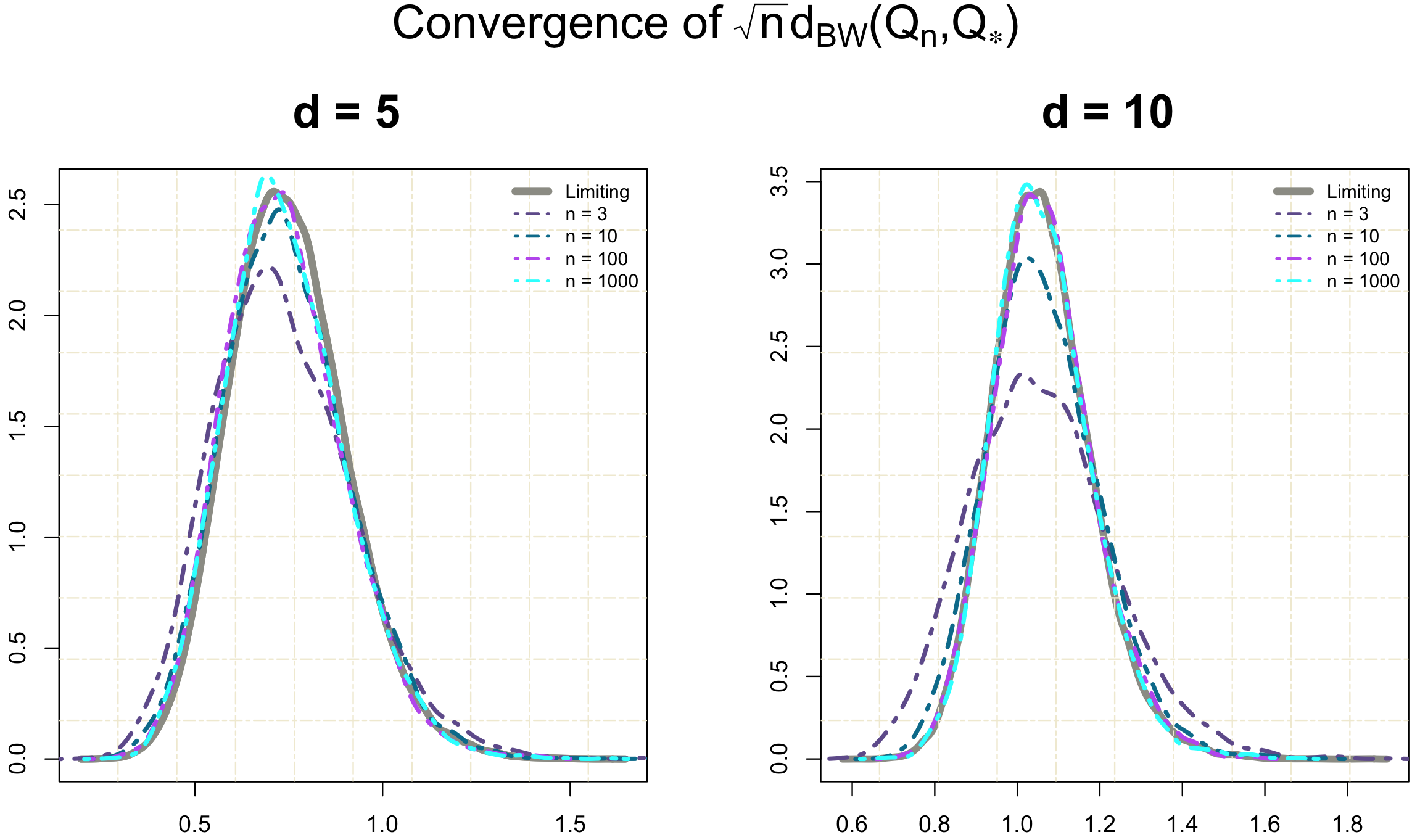}
	\caption{Densities of \(\mathcal{L}\left(\sqrt{n}d_{BW}(Q_n, Q_*)\right)\)}
	\label{fig:dbw}
\end{figure}

\begin{figure}[h!]
	\centering
		\includegraphics[width=0.9\textwidth]{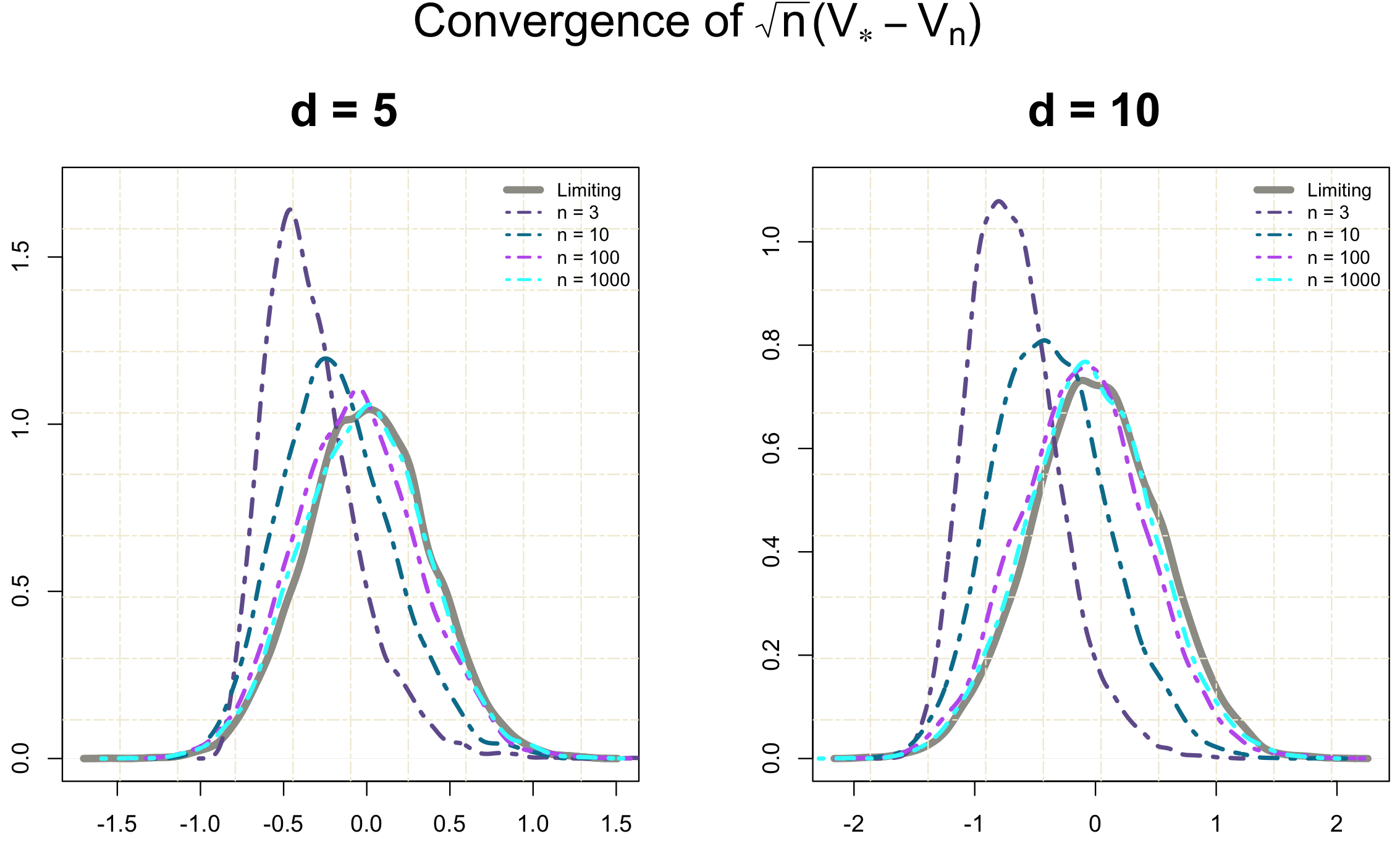}
	\caption{Densities of \(\mathcal{L}\left(\sqrt{n}(\mathcal{V}_n - \mathcal{V}_*)\right)\)}
	\label{fig:var}
\end{figure}

\subsection{Data aggregation in climate modelling}
In this section we carry out the experiments
on a family of Gaussian process, using a climate-related data set, 
collected in Siberia (Russia) between 1930 and 2009 \cite{bulygina2012daily, tatusko1990cooperation}.
We set \(\mathcal{SL}(\mu)\) to be
a family of Gaussian curves,
that describe the daily minimum temperatures within one year,
measured at a set of 30 randomly sampled meteorological stations.  
Each curve is obtained by means of 
regression and maximum likelihood estimation 
and is sampled in \(50\) points.
More details on this data set are provided in~\cite{fragen2017learning}.
The scale-location family under consideration is written as
\[
\mathcal{SL}(\mu) = \left\{\nu_t = GP(m_t, S_t),~
m_t \in \R^{50}, ~S_t \in \sym_{++}(50), ~t = 1,..., 71
\right\},
\]
where \(\nu_t = GP(m_t, S_t)\)
is a Gaussian process, characterised by mean \(m_t\) and covariance \(S_t\) inherent
to a year \(t\), \(t \in \{ 1933,...,  2009\} \). 
We let \(m_t = 0\) for all \(t\).
A Gaussian process \(\mu_* = GP(r_*, Q_*)\) is the population Wasserstein barycenter of \(\mathcal{SL}(\mu)\). It is characterised by
\((r_*, Q_*) \)
\[
r_* = 0,
\quad
Q_* = \frac{1}{71}\sum^{71}_{t = 1}\left(Q^{1/2}_* S_t Q^{1/2}_* \right)^{1/2}.
\]
A family of approximating processes\(\mu_n = GP(r_n, Q_n) \) with parameters
\((r_n, Q_n)\) \eqref{def:rn} is constructed 
by means of re-sampling with replacement 
of the original data set.
Sample size \(n\) varies in range \(n = \{3, 10, 70\}\).
Fig.~\ref{fig:F_GP} and Fig.~\ref{fig:BW_GP} present densities of \(\mathcal{L}\left( \sqrt{n}\norm{Q_n - Q_*}_{F}\right)\) and \(\mathcal{L}\left( \sqrt{n}d_{BW}(Q_n, Q_*)\right)\) respectively.
\begin{figure}
  \begin{minipage}[b]{0.45\textwidth}
    \includegraphics[width=\textwidth]{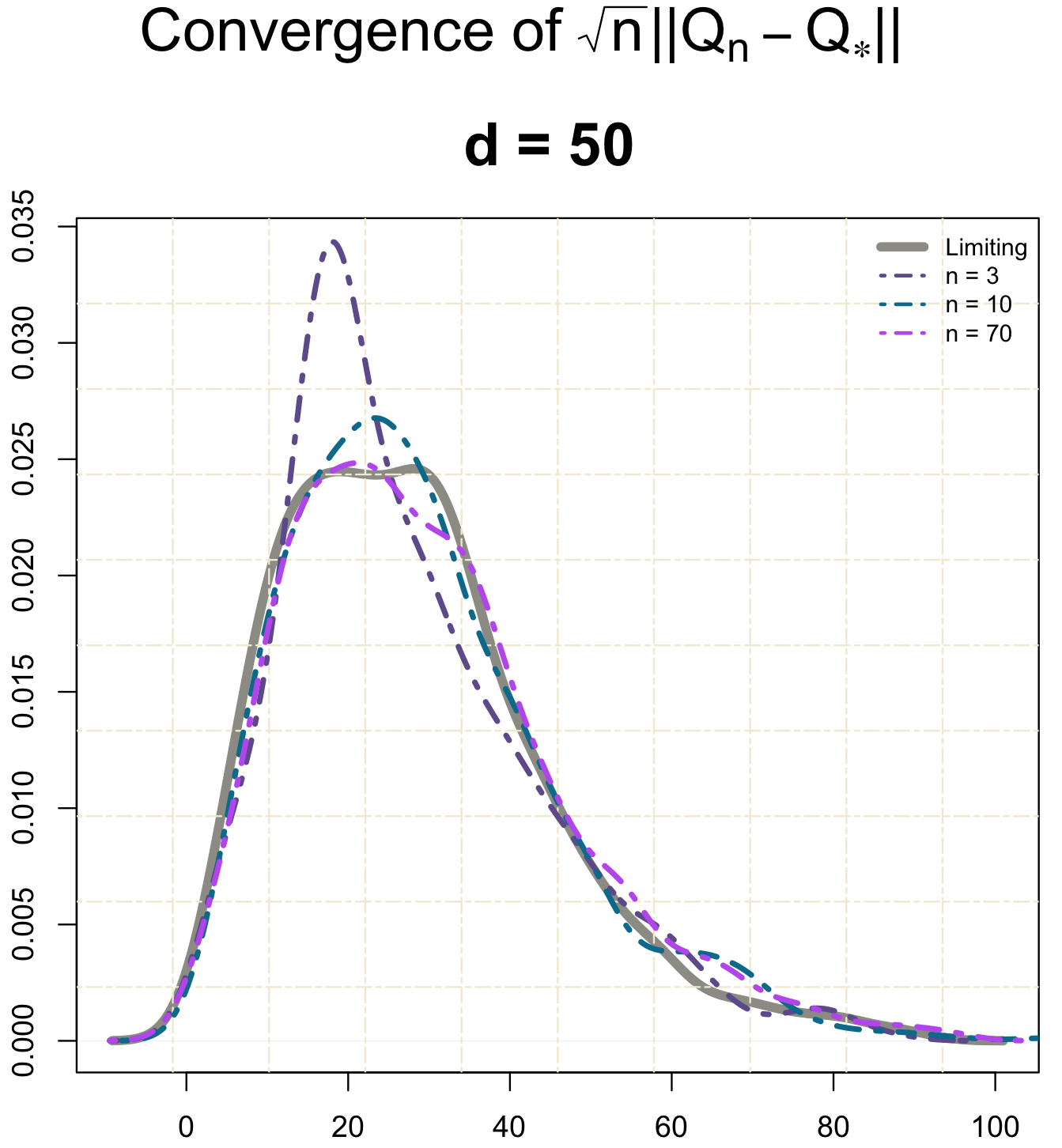}
    \caption{\(\mathcal{L}\left(\sqrt{n}\norm{Q_n - Q_*}_{F} \right)\)}
    \label{fig:F_GP}
  \end{minipage}
  \begin{minipage}[b]{0.45\textwidth}
    \includegraphics[width=\textwidth]{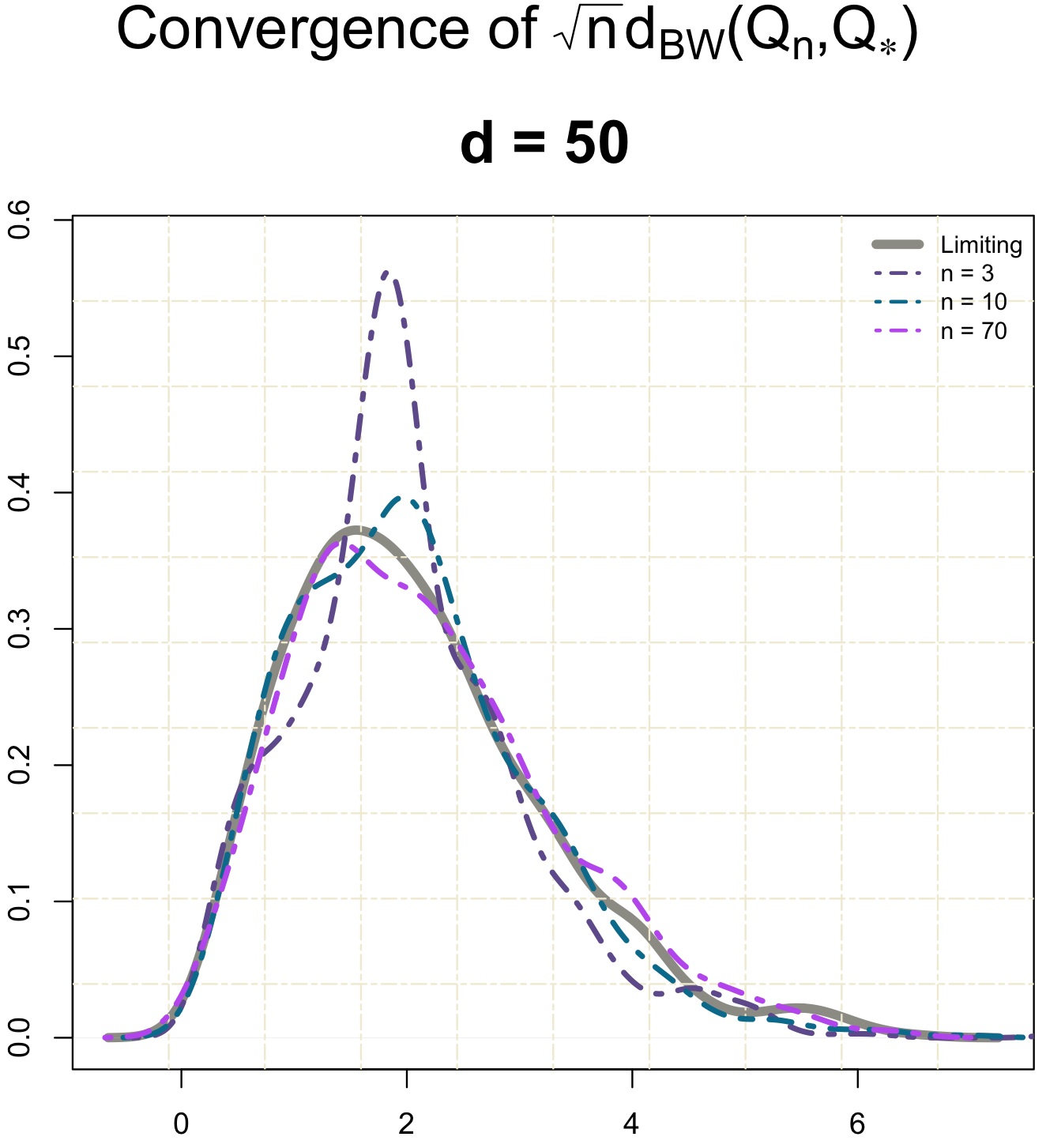}
    \caption{\(\mathcal{L}\left(\sqrt{n}d_{BW}(Q_n, Q_*) \right)\)}
    \label{fig:BW_GP}
  \end{minipage}
\end{figure}

\vskip 0.2in
\bibliographystyle{plainnat}
\bibliography{references.bib}
\appendix

\section{Proof of Central Limit Theorem}
\label{sec:proof}


\subsection{Properties of $T_Q^S$}
\label{sec:proof_frechet}

\begin{proof}[Proof of Lemma~\ref{lemma:def_T}]
    First, we prove that optimal \(T\) is self-adjoint. Indeed, assume the opposite, then 
    \[
    Q^{1/2} T Q T^* Q^{1/2} = \left(Q^{1/2} T Q^{1/2}\right) \left(Q^{1/2} T Q^{1/2}\right)^* = Q^{1/2} S Q^{1/2}
    \]
    and thus \(\tr Q^{1/2} T Q^{1/2} < \tr \left(Q^{1/2} S Q^{1/2}\right)^{1/2}\). 
    Therefore
    \begin{align*}
        \tr (T - I) Q (T^* - I) 
        & = \tr S + \tr Q - 2 \tr T Q 
        = \tr S + \tr Q - 2 \tr Q^{1/2} T Q^{1/2} \\
        & > \tr S + \tr Q - 2 \tr \left(Q^{1/2} S Q^{1/2}\right)^{1/2} = d_{BW}^2(Q, S).
    \end{align*}
    
    If \(T\) is Hermitian but not positive semi-definite, then \(Q^{1/2} T Q^{1/2} \preccurlyeq \left(Q^{1/2} S Q^{1/2}\right)^{1/2}\), \(Q^{1/2} T Q^{1/2} \neq \left(Q^{1/2} S Q^{1/2}\right)^{1/2}\), hence again \(\tr Q^{1/2} T Q^{1/2} < \tr \left(Q^{1/2} S Q^{1/2}\right)^{1/2}\). 
    
    Finally, if \(T \in \H_+(n)\), then it is straightforward to check that \(T = T_Q^S\) given by~\eqref{def:T_Q} and
    \[
    \tr (T - I) Q (T^* - I) 
    = \tr S + \tr Q - 2 \tr \left(Q^{1/2} S Q^{1/2}\right)^{1/2}
    = d_{BW}^2(Q, S).
    \]
\end{proof}

The proof of the Central Limit theorem mainly relies on 
the differentiability of the map~\eqref{def:T_Q}. 
Lemma~\ref{lemma:taylor_T} shows that 
\(T_Q^S\)
can be linearised in the vicinity of \(Q\):
\[
T_{Q + X}^S 
= T_Q^S + \dT{Q}{S}(X) + o\bigl(\norm{X}\bigr),   
\]
where \(\dT{Q}{S} \colon \H(d) \to \H(d)\) is a self-adjoint negative-definite operator
and \(\norm{\cdot}\) stands for an operator norm. 
Properties of \(\dT{Q}{S}\) are investigated in Lemma~\ref{lemma:dT}. 
Let us introduce some notation: if \(G(A)\) is a functional of a matrix \(A\), then
we denote its differential as \(\o{d_A}G\).

\begin{lemma}
\label{lemma:taylor_root}
    Map \(Q \mapsto g(Q) = Q^{1/2}\) is differentiable on \(\H_{++}(d)\), and its differential is given by
    \[
    \o{d_Q}g(X) = U^* \left(\frac{(U X U^*)_{i j}}{\sqrt{q_i} + \sqrt{q_j}}\right)_{i, j = 1}^d U, \quad
    X \in \H(d),
    \]
    where \(Q = U^* \diag(q) U\) is the eigenvalue decomposition.
\end{lemma}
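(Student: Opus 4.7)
The plan is to derive the formula from the implicit relation $g(Q)^2 = Q$ and then justify differentiability via the implicit function theorem. Setting $Y = g(Q)$ and differentiating both sides of $Y^2 = Q$ formally gives the Sylvester--Lyapunov equation
\[
Y \, \o{d_Q}g(X) + \o{d_Q}g(X) \, Y = X, \qquad X \in \H(d),
\]
since the unique square root stays in $\H_{++}(d)$ and so is Hermitian. I would then diagonalize: writing $Q = U^* \diag(q) U$ with $q_i > 0$, one has $Y = U^* \diag(\sqrt{q}) U$, and after conjugating the Lyapunov equation by $U$ (and writing $\tilde X = U X U^*$, $\tilde Z = U \o{d_Q}g(X) U^*$) it decouples entry-wise as
\[
(\sqrt{q_i} + \sqrt{q_j}) \, \tilde Z_{ij} = \tilde X_{ij},
\]
which yields the claimed formula.

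For the rigorous justification of differentiability, I would appeal to the implicit function theorem applied to the smooth map $F \colon \H_{++}(d) \times \H_{++}(d) \to \H(d)$ defined by $F(Y, Q) = Y^2 - Q$. The partial differential $\o{\partial_Y} F(Y, Q)$ is the Lyapunov operator $Z \mapsto Y Z + Z Y$, whose eigenvalues in the basis diagonalizing $Y$ are $\sqrt{q_i} + \sqrt{q_j} > 0$; hence it is invertible on $\H(d)$ for every $Y \succ 0$. Since $F$ is polynomial and $F(g(Q), Q) = 0$ for all $Q \in \H_{++}(d)$, the implicit function theorem yields that $g$ is (smoothly) differentiable on $\H_{++}(d)$, with
\[
\o{d_Q} g = -\bigl(\o{\partial_Y} F(g(Q), Q)\bigr)^{-1} \o{\partial_Q} F(g(Q), Q),
\]
and this operator is exactly the inverse Lyapunov operator applied to $X$, whose explicit diagonal form is what was computed above.

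The only subtlety worth flagging is the uniqueness of the symmetric positive-definite square root (so that $g$ is well-defined as a map into $\H_{++}(d)$, not just a multivalued inverse of $Y \mapsto Y^2$); this is standard, and combined with invertibility of the Lyapunov operator it rules out any spurious non-Hermitian solution to $Y Z + Z Y = X$ in the tangent space. Beyond this, the main technical point is purely algebraic: verifying that the operator defined by the explicit formula is self-adjoint (immediate from the symmetry of the denominator $\sqrt{q_i} + \sqrt{q_j}$ under swapping $i \leftrightarrow j$) and that it genuinely inverts $Z \mapsto Y Z + Z Y$, which is direct once one works in the eigenbasis of $Q$.
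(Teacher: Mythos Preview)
Your proposal is correct and follows essentially the same route as the paper: both differentiate the identity \(g(Q)^2 = Q\) to obtain the Lyapunov equation \(Q^{1/2} Z + Z Q^{1/2} = X\) and solve it entrywise in the eigenbasis of \(Q\). The only cosmetic difference is that the paper invokes the \emph{inverse} function theorem for \(P \mapsto P^2\) (checking non-degeneracy of \(X \mapsto PX + XP\) via \(\langle PX + XP, X\rangle = 2\tr XPX > 0\)), whereas you phrase it as the \emph{implicit} function theorem for \(F(Y,Q) = Y^2 - Q\) and verify invertibility by exhibiting the eigenvalues \(\sqrt{q_i} + \sqrt{q_j}\); these are equivalent here.
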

\begin{proof}
    First, let us consider the map \(P \mapsto f(P) = P^2\). It is smooth and its differential
    \[
    \o{d_P}f(X) = P X + X P, \quad
    X \in \H(d)
    \]
    is non-degenerated:
    \[
    \langle \o{d_P}f(X), X \rangle = 2 \tr X P X > 0, \quad
    X \neq 0,
    \]
    whenever \(P \in \H_{++}(d)\). From now on \(\langle \cdot, \cdot\rangle\) denotes a scalar product associated to Frobenius norm.
    
    Now applying the inverse function theorem we obtain that the inverse map \(g(\cdot)\) is also smooth and its differential enjoys the following equation
    \[
    X = \left(\left.\o{d_P}f\right|_{P = Q^{1/2}}\right)\left(\o{d_Q}g(X)\right) = 
    Q^{1/2} \o{d_Q}g(X) + \o{d_Q}g(X) Q^{1/2},
    \]
    thus
    \[
    U X U^* = (\diag(q))^{1/2} U \o{d_Q}g(X) U^* + U \o{d_Q}g(X) U^* (\diag(q))^{1/2},
    \]
    \[
    (U X U^*)_{i j} = (\sqrt{q_i} + \sqrt{q_j}) (U \o{d_Q}g(X) U^*)_{i j}, \quad 1 \le i, j \le d,
    \]
    and
    \[
    \o{d_Q}g(X) = U^* \left(\frac{(U X U^*)_{i j}}{\sqrt{q_i} + \sqrt{q_j}}\right)_{i, j = 1}^d U.
    \]
\end{proof}

\begin{lemma}[Fr{\'e}chet-differentiability of the map \(T^S_Q\)]
\label{lemma:taylor_T}
    For any \(S \in \H_{+}(d)\) the map \(T_{Q}^{S}\) can be linearised in the vicinity of \(Q \in \H_{++}(d)\) as
    \[
    T_{\tilde{Q}}^{S} 
    = T_{Q}^{S} + \dT{Q}{S}\left(\tilde{Q} - Q\right) + o\left(\norm{\tilde{Q} - Q}\right), \quad
    \text{as} \quad \tilde{Q} \to Q,
    \]
    where 
    \begin{equation}
    \label{eq:DT}
        \dT{Q}{S}(X) \eqdef - S^{1/2} U^* \Lambda^{-1/2} \delta \Lambda^{-1/2} U S^{1/2}, \quad
        X \in \H(d),
    \end{equation}
    \(U^* \Lambda U\) is an eigenvalue decomposition of \(S^{1/2} Q S^{1/2}\)
    \begin{gather*}
        U^* \Lambda U = S^{1/2} Q S^{1/2}, \quad 
        U^* U = U U^* = I, \quad
        \Lambda = \diag\left(\lambda_1, \dots, \lambda_{\rank(S)}, 0, \dots, 0\right), \\
        \Lambda^{-1/2} = \left(\Lambda^{1/2}\right)^+ = \diag(\lambda_1^{-1/2}, \dots, \lambda_{\rank(S)}^{-1/2}, 0, \dots, 0), \\
        \delta = (\delta_{i j})_{i, j = 1}^d, \quad
        \delta_{i j} = 
        \begin{cases}
            \frac{\Delta_{i j}}{\sqrt{\lambda_i} + \sqrt{\lambda_j}}, & i, j \le \rank(S)\\
            0, & \text{ otherwise}
        \end{cases}, \quad 
        \Delta = U S^{1/2} X S^{1/2} U^*.
    \end{gather*}
\end{lemma}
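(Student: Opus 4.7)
The plan is to apply the chain rule to the decomposition
\[
T_Q^S = S^{1/2} \, g(f(Q)) \, S^{1/2}, \qquad f(Q) \eqdef S^{1/2} Q S^{1/2}, \qquad g(A) \eqdef A^{-1/2},
\]
where $g$ denotes the pseudo-inverse of the square root. Since $f$ is linear with differential $X \mapsto S^{1/2} X S^{1/2}$, the only real work is to show that $g$ is Fr{\'e}chet-differentiable at $A_0 \eqdef f(Q)$ and to identify its differential, after which $\dT{Q}{S}(X) = S^{1/2}\,(\o{d_A} g)(S^{1/2} X S^{1/2})\,S^{1/2}$.

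If $S$ is of full rank, then $A_0 \in \H_{++}(d)$ and $g$ coincides with the genuine inverse square root on a neighbourhood of $A_0$. Combining Lemma~\ref{lemma:taylor_root} with the rule $\o{d}(M^{-1}) = - M^{-1}(\o{d} M) M^{-1}$ applied to $M = A^{1/2}$ yields
\[
(\o{d_A} g)(Y) = -A_0^{-1/2} \,(\o{d_A} A^{1/2})(Y)\, A_0^{-1/2},
\]
and a diagonalisation $A_0 = U^* \Lambda U$ leads after simplification to the expression claimed in~\eqref{eq:DT}.

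The subtle case is $r \eqdef \rank(S) < d$, in which $A_0$ is positive semi-definite of rank $r$ and $g$ is a genuine pseudo-inverse. The key observation is that for every $\tilde Q \in \H_{++}(d)$ the matrix $f(\tilde Q)$ has kernel $\ker S^{1/2}$ and range $V \eqdef \range(S^{1/2})$, both independent of $\tilde Q$; this uses the invertibility of $\tilde Q$. Fixing an isometry $J \colon \C^r \to V$, the identity
\[
g(f(\tilde Q)) = J \bigl(J^* f(\tilde Q) J\bigr)^{-1/2} J^*
\]
reduces the problem to differentiating the inverse square root of the positive-definite $r \times r$ matrix $J^* f(\tilde Q) J$, which is covered by the full-rank case. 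Hence $\tilde Q \mapsto T_{\tilde Q}^S$ is smooth on $\H_{++}(d)$ and Fr{\'e}chet-differentiability with the claimed $o(\norm{\tilde Q - Q})$ remainder follows.

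It remains to check that this construction reproduces~\eqref{eq:DT}. I would choose $J$ so that its columns are the first $r$ rows of the unitary $U$ from the statement; this diagonalises the reduced matrix as $\Lambda_r = \diag(\lambda_1,\dots,\lambda_r)$. Since the remaining $d-r$ rows of $U$ lie in $\ker S^{1/2}$, we have $S^{1/2} U^* = S^{1/2} J$ and the matrix $U S^{1/2} X S^{1/2} U^*$ vanishes outside its upper-left $r \times r$ block, which is exactly the entry-wise description of $\Delta$ in the lemma. Substituting these facts into the chain-rule expression then yields~\eqref{eq:DT}, with $\Lambda^{-1/2}$ correctly interpreted as the pseudo-inverse and $\delta$ zero-padded on the last $d-r$ indices. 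The main bookkeeping obstacle is precisely this reconciliation between the $r$-dimensional non-degenerate computation and the $d$-dimensional pseudo-inverse expression in~\eqref{eq:DT}; it is an indexing check, but it must be carried out carefully to be sure no cross-terms between $V$ and $V^\perp$ are dropped.
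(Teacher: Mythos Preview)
Your proposal is correct and follows essentially the same route as the paper: reduce to differentiating $(S^{1/2}QS^{1/2})^{-1/2}$, handle the degenerate case by restricting to $\range(S)$ (the paper does this in one sentence ``w.l.o.g.\ $S\succ 0$'' where you spell out the isometry $J$), and combine Lemma~\ref{lemma:taylor_root} with the derivative of the inverse. The only cosmetic difference is that the paper expands $(\Lambda+\Delta)^{-1/2}$ via a von Neumann series rather than invoking $\o{d}(M^{-1})=-M^{-1}(\o{d}M)M^{-1}$, which amounts to the same first-order computation.
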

\begin{proof}
    The proof mainly relies on the differentiation of the pseudo-inverse term
    \(\bigl(S^{1/2} Q S^{1/2} \bigr)^{-1/2} \), 
    as soon as
    \[
    \dT{Q}{S}(X) = S^{1 / 2} \o{d_Q} \left(S^{1 / 2} Q S^{1 / 2}\right)^{- 1 / 2}(X) S^{1 / 2}.
    \]
    Obviously we can consider only restriction to $\range(S)$ and therefore assume w.l.o.g. $S \succ 0$.
    As \(\left(S^{1 / 2} (Q + X) S^{1 / 2}\right)^{- 1 / 2} = U^* \left(\Lambda + \Delta\right)^{- 1 / 2} U\), by  Lemma~\ref{lemma:taylor_root} and von Neumann series expansion we obtain for infinitesimal \(X \in \H(d)\) and corresponding \(\Delta\) that
    \begin{align*}
        \left(\Lambda + \Delta\right)^{- 1 / 2}
        & = \left(\Lambda^{1/2} + \delta + o(\norm{\Delta})\right)^{-1} \\
        & = \Bigl(\Lambda^{1 / 4} \left(I + \Lambda^{-1 / 4} \delta \Lambda^{-1 / 4} + o(\norm{\Delta}) \right) \Lambda^{1 / 4} \Bigr)^{-1} \\
        & = \Lambda^{- 1 / 4} \left(I - \Lambda^{-1 / 4} \delta \Lambda^{-1 / 4} + o(\norm{\Delta}) \right) \Lambda^{-1/4} \\
        & = \Lambda^{- 1 / 2} - \Lambda^{-1 / 2} \delta \Lambda^{-1 / 2} + o(\norm{\Delta}).
    \end{align*}

    Then the differential \(\o{d_{Q}}\left(S^{1/2} Q S^{1/2}\right)^{-1/2}(X)\) is written as
    \begin{equation*}
        \o{d_{Q}}\left(S^{1/2} Q S^{1/2}\right)^{-1/2}(X) 
        = - U^* \Lambda^{-1/2} \delta \Lambda^{-1/2} U.
    \end{equation*}
    Therefore,
    \[
    T^S_{Q + X} = T^S_{Q} + \dT{Q}{S}(X) + o(\norm{X}),
    \]
    where \(\dT{Q}{S}(X)\) is defined by~\eqref{eq:DT}.
\end{proof}

Lemmas~\ref{lemma:dT} and~\ref{lemma:integral_form} are technical and explore properties of \(\dT{Q}{S}\). 
 
\begin{lemma}
\label{lemma:dT}
    For any \(S \in \H_+(d)\), \(Q \in \H_{++}(d)\), the properties of operator \(\dT{Q}{S}\) defined in~\eqref{eq:DT} 
    are following:
    \begin{enumerate}[(I)]
        \item it is self-adjoint;
        
        \item it is negative semi-definite;
        
        \item it enjoys the following bounds:
        \begin{align*}
            - \left\langle \dT{Q}{S}(X), X \right\rangle &
            \le \frac{\lmax^{1/2}\left(S^{1/2} Q S^{1/2}\right)}{2} \norm{Q^{- 1 / 2} X Q^{- 1 / 2}}_F^2,\\
            - \left\langle \dT{Q}{S}(X), X \right\rangle &
            \ge \frac{\lmin^{1/2}\left(S^{1/2} Q S^{1/2}\right)}{2} \norm{Q^{- 1 / 2} X Q^{- 1 / 2}}_F^2;
        \end{align*}

        \item it is homogeneous w.r.t.\ $Q$ with degree $-\frac{3}{2}$ and w.r.t.\ $S$ with degree $\frac{1}{2}$, i.e.\ \(\dT{a Q}{S} = a^{-3 / 2} \dT{Q}{S}\) and \(\dT{Q}{a S} = a^{1 / 2} \dT{Q}{S}\) for any $a > 0$;
        
        \item it is monotone w.r.t.\ $S^{1/2} Q S^{1/2}$ (once range $S$ is fixed): \(\dT{Q_0}{S_0} \preccurlyeq \dT{Q_1}{S_1}\) in the sense of self-adjoint operators on $\H(d)$ whenever \(S_0^{1/2} Q_0 S_0^{1/2} \preccurlyeq S_1^{1/2} Q_1 S_1^{1/2}\) and \(\range(S_0) = \range(S_1)\); in particular, $\dT{Q}{S}$ is monotone w.r.t.\ $Q \in \H_{++}(d)$ for fixed $S$.
    \end{enumerate}
\end{lemma}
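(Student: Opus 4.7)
The plan is to extract all five properties from the explicit eigenbasis formula in Lemma~\ref{lemma:taylor_T}. Using cyclicity of trace and the fact that $\Delta = U S^{1/2} X S^{1/2} U^*$ is Hermitian, a short computation gives the unifying identity
\begin{equation*}
\bigl\langle -\dT{Q}{S}(X),\,X\bigr\rangle
\,=\, \tr\bigl(\Lambda^{-1/2}\delta\,\Lambda^{-1/2}\,\Delta\bigr)
\,=\, \sum_{i,j\le\rank(S)} \frac{|\Delta_{ij}|^2}{\sqrt{\lambda_i\lambda_j}\,(\sqrt{\lambda_i}+\sqrt{\lambda_j})}.
\end{equation*}
The restriction $i,j\le\rank(S)$ is automatic: $U$ diagonalises $S^{1/2}QS^{1/2}$, whose kernel coincides with $\ker S$ (since $Q\succ 0$), so $\Delta$ vanishes off the $\rank(S)\times\rank(S)$ top-left block. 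Self-adjointness (I) follows because this kernel is symmetric in $(i,j)$ and the outer sandwich $X\mapsto S^{1/2}U^*(\cdot)U S^{1/2}$ is its own formal adjoint under the Frobenius inner product; non-negativity of every summand yields (II).

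For (III), assume momentarily $S\succ 0$; inverting $S^{1/2}QS^{1/2}=U^*\Lambda U$ gives $Q^{-1}=S^{1/2}U^*\Lambda^{-1}US^{1/2}$, whence an analogous computation delivers
\[
\|Q^{-1/2}XQ^{-1/2}\|_F^2 \,=\, \tr(Q^{-1}XQ^{-1}X) \,=\, \sum_{i,j}\frac{|\Delta_{ij}|^2}{\lambda_i\lambda_j}.
\]
The per-term ratio with the previous sum is $\sqrt{\lambda_i\lambda_j}/(\sqrt{\lambda_i}+\sqrt{\lambda_j})$, the half-harmonic-mean of $\sqrt{\lambda_i}$ and $\sqrt{\lambda_j}$, which lies between $\tfrac12\lmin^{1/2}(S^{1/2}QS^{1/2})$ and $\tfrac12\lmax^{1/2}(S^{1/2}QS^{1/2})$; this supplies both bounds in (III), and the rank-deficient case reduces to the invertible one on $\range(S)$ since the complementary directions of $X$ contribute trivially. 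Homogeneity (IV) is pure bookkeeping: $Q\mapsto aQ$ leaves $U,\Delta$ fixed while $\Lambda\mapsto a\Lambda$ and $\delta\mapsto a^{-1/2}\delta$, giving the overall factor $a^{-3/2}$; $S\mapsto aS$ rescales $US^{1/2}$ by $\sqrt a$, hence $\Delta$ by $a$ and $\Lambda$ by $a$, and tracking the remaining factors produces $a^{1/2}$.

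The real work is (V). With $M\eqdef S^{1/2}QS^{1/2}$ and $Y\eqdef S^{1/2}XS^{1/2}$, the cleanest repackaging is
\begin{equation*}
\bigl\langle -\dT{Q}{S}(X),\,X\bigr\rangle \,=\, \bigl\langle L_M^{-1}(Y),\,Y\bigr\rangle,
\end{equation*}
where $L_M(Z)\eqdef M^{1/2}ZM + MZM^{1/2}$ is a symmetric Lyapunov-type operator on $\H(d)$; indeed, in the $U$-basis $L_M$ is diagonal with eigenvalues $\sqrt{\lambda_i\lambda_j}(\sqrt{\lambda_i}+\sqrt{\lambda_j})$, the reciprocal of the kernel above. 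After vectorisation $L_M$ becomes $M\otimes M^{1/2}+M^{1/2}\otimes M$; combining operator monotonicity of the square root with the telescoping identity $A_1\otimes B_1-A_0\otimes B_0=(A_1-A_0)\otimes B_1+A_0\otimes(B_1-B_0)$ gives $M_0\preccurlyeq M_1\Rightarrow L_{M_0}\preccurlyeq L_{M_1}$, and hence $L_{M_0}^{-1}\succcurlyeq L_{M_1}^{-1}$. For fixed $S$ this already yields the ``in particular'' clause because $Y$ does not depend on $Q$. The genuine obstacle is the full statement with both $Q_k,S_k$ varying: then $Y_k=S_k^{1/2}XS_k^{1/2}$ depends on $k$ and Lyapunov monotonicity alone does not close the comparison. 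I would invoke Lemma~\ref{lemma:integral_form}, whose integral representation of $\dT{Q}{S}$ should decouple the $M$-dependence from the $S$-sandwich by rewriting the quadratic form as $\int_0^\infty \tr\bigl(h_t(M^{1/2})\,Y\,h_t(M^{1/2})\,Y\bigr)\,dt$; the common-range assumption $\range(S_0)=\range(S_1)$ then permits the Lyapunov-type monotonicity to be applied uniformly in $t$ and transported across the two pairs $(Q_k,S_k)$.
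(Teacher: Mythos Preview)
Your treatment of (I)--(IV) is essentially the paper's: the identity
\[
\bigl\langle -\dT{Q}{S}(X),X\bigr\rangle
=\sum_{i,j\le r}\frac{|\Delta_{ij}|^2}{\sqrt{\lambda_i\lambda_j}\,(\sqrt{\lambda_i}+\sqrt{\lambda_j})}
\]
is exactly what the paper derives, and your half-harmonic-mean bound for (III) is their observation that $f(\lambda_i,\lambda_j)=\sqrt{\lambda_i\lambda_j}/(\sqrt{\lambda_i}+\sqrt{\lambda_j})$ is monotone in each argument. For (V) with $S$ fixed, your Lyapunov packaging $L_M\cong M\otimes M^{1/2}+M^{1/2}\otimes M$ together with the tensor telescoping is a clean variant of the paper's route. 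The paper instead writes $\langle\dT{Q}{S}(X),X\rangle=\langle\o{d_M}M^{-1/2}(Y),Y\rangle$ with $M=S^{1/2}QS^{1/2}$, $Y=S^{1/2}XS^{1/2}$, identifies $\o{d_M}M^{-1/2}=\bigl(\o{d_P}P^{-2}\big|_{P=M^{-1/2}}\bigr)^{-1}$, and checks that $\langle\o{d_P}P^{-2}(X),X\rangle=-2\tr MXM^{1/2}X$ is monotone in $M$. Both arguments ultimately rest on operator monotonicity of $M\mapsto M^{1/2}$; they just invert the order relation at different stages.

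Where your argument breaks down is the closing move for the ``general'' (V). Invoking Lemma~\ref{lemma:integral_form} is circular: that lemma's proof \emph{uses} the monotonicity and homogeneity of $\dT{Q}{S}$ from the present lemma. It also does not contain the content you ascribe to it: Lemma~\ref{lemma:integral_form} bounds the line integral $\int_0^1\dT{Q_t}{S}\,dt$ between multiples of $\dT{Q_0}{S}$; it does not furnish an integral representation of $\dT{Q}{S}$ of the form $\int_0^\infty\tr\bigl(h_t(M^{1/2})\,Y\,h_t(M^{1/2})\,Y\bigr)\,dt$. The paper's own proof of (V) makes no attempt to decouple the $S$-dependence of $Y$ from that of $M$: after reducing to $S\succ 0$, it fixes $X$ (hence $Y$) and proves monotonicity of $\o{d_M}M^{-1/2}$ in $M$---which is exactly your $L_{M_0}^{-1}\succcurlyeq L_{M_1}^{-1}$. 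So the correct comparison is that the paper's (V) argument \emph{is} your Lyapunov step, rephrased; the additional mechanism you propose for varying $S$ is not present in the paper and, as you set it up, cannot be supplied by Lemma~\ref{lemma:integral_form}.
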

\begin{proof}
    Slightly changing notations, we rewrite~\eqref{eq:DT} as
    \[
    \dT{Q}{S}(X) = - S^{1 / 2} U^* \Lambda^{-1 / 2} \delta^{X} \Lambda^{-1 / 2} U S^{1 / 2},
    \]
    where matrices \(U, U^*\) and \(\Lambda\) come from Lemma~\ref{lemma:taylor_T} and
    \[
    \delta^X = (\delta^X_{i j})_{i, j = 1}^d, \quad
    \delta^X_{i j} = \frac{\Delta^X_{i j}}{\sqrt{\lambda_i} + \sqrt{\lambda_j}}, \quad
    \Delta^X = U S^{1 / 2} X S^{1 / 2} U^*.
    \]
    
    \subsubsection*{(I) Self-adjointness}
    Consider a scalar product 
    \begin{align*}
        \langle \dT{Q}{S}(X), Y \rangle 
        = \tr\bigl(\dT{Q}{S}(X) Y \bigr) 
        & = - \tr\bigl(S^{1/2} U^* \Lambda^{-1/2} \delta^X \Lambda^{-1/2} U S^{1/2} Y \bigr) \\
        {} & = - \tr\bigl(\Lambda^{-1/2} \delta^{X} \Lambda^{-1/2} U S^{1/2} Y S^{1/2} U^* \bigr).
    \end{align*}
    We now introduce a following notation
    \[
    \Delta^{Y} \eqdef U S^{1/2} Y S^{1/2} U^*.
    \]
    Then the above equality can be continued as follows:
    \begin{align*}
        - & \tr\bigl(\Lambda^{-1/2} \delta^{X} \Lambda^{-1/2} U S^{1/2} Y S^{1/2} U^* \bigr)
        = - \tr\bigl( \Lambda^{-1/2} \delta^{X} \Lambda^{-1/2} \Delta^{Y} \bigr) \\
        {} & \qquad = - \sum_{i, j = 1}^r \frac{\delta^{X}_{i j}}{\sqrt{\lambda_i \lambda_j}} \Delta^{Y}_{i j} 
        = - \sum_{i, j = 1}^r \frac{\Delta^{X}_{i j} \Delta^{Y}_{i j}}{\sqrt{\lambda_i \lambda_j}(\sqrt{\lambda_i} + \sqrt{\lambda_j})} \\
        {} & \qquad = \tr\bigl(\dT{Q}{S}(Y) X \bigr)
        = \tr\bigl(X \dT{Q}{S}(Y)\bigr)
        = \langle X, \dT{Q}{S}(Y) \rangle,
    \end{align*}
    where \(r = \rank(S)\).
    Thus the operator is self-adjoint.
    
    \subsubsection*{(II) Boundedness and (III) eigenvalues}
    Denoting \(\Delta^X\) by \(\Delta\) (i.e. now \(\Delta = U S^{1 / 2} X S^{1 / 2} U^*\)) and taking into account the above expansion of an inner product, one obtains
    \begin{align}
    \label{eq:scalar_product}
        - \langle \dT{Q}{S}(X), X \rangle 
        = \sum_{i, j = 1}^r \frac{\Delta^2_{i j}}{\sqrt{\lambda_i \lambda_j}(\sqrt{\lambda_i} + \sqrt{\lambda_j})} 
        = \sum_{i, j = 1}^r \left(\frac{\Delta_{i j}}{\sqrt{\lambda_i \lambda_j}}\right)^2 \frac{\sqrt{\lambda_i \lambda_j}}{\sqrt{\lambda_i} + \sqrt{\lambda_j} }.
    \end{align}
    
    Note, that the function
    \(
    f(\lambda_i, \lambda_j) \eqdef \frac{\sqrt{\lambda_i \lambda_j}}{\sqrt{\lambda_i} + \sqrt{\lambda_j}}
    \)
    is monotonously increasing in both arguments \(\lambda_i\) and \(\lambda_j \), thus
    \begin{equation}
    \label{eq:eigen_values_delta}
        \max_{i, j} f(\lambda_i, \lambda_j) = \frac{\lmax^{1/2}(\Lambda)}{2}, \quad
        \min_{i, j} f(\lambda_i, \lambda_j) = \frac{\lmin^{1/2}(\Lambda)}{2}.
    \end{equation}
    For the sake of simplicity we introduce a new variable
    \[
    \zeta \eqdef Q^{- 1 / 2} X Q^{- 1 / 2},
    \]
    its Frobenius norm is written as
    \[
    \norm{\zeta}_F^2
    = \tr \bigl(X Q^{- 1} X Q^{- 1} \bigr).
    \]
    Moreover, the following inequality for trace holds:
    \begin{align*}
        \tr \bigl(X Q^{- 1} X Q^{- 1} \bigr) 
        & \ge \tr \bigl(\Pi_S X \Pi_S Q^{- 1} \Pi_S X \Pi_S Q^{- 1} \Pi_S \bigr) \\
        & = \tr \bigl(\Delta \Lambda^{+} \Delta \Lambda^{+} \bigr)
        = \norm{\Lambda^{- 1 / 2} \Delta \Lambda^{- 1 / 2}}_F^2
        = \sum_{i, j = 1}^r \frac{\Delta_{i j}^2}{\lambda_i \lambda_j}.
    \end{align*}
    Here \(\Pi_S\) is the orthogonal projector onto range of \(S\).
  
Then combining~\eqref{eq:scalar_product} with~\eqref{eq:eigen_values_delta}, 
    the upper and lower bounds can be obtained as follows: 
    \begin{align*}
        -\langle \dT{Q}{S}(X), X \rangle 
        & \le \max_{i, j} f(\lambda_i, \lambda_j) \sum_{i, j = 1}^r \left(\frac{\Delta_{ij}}{\sqrt{\lambda_i \lambda_j}}\right)^2
        \le \frac{\lmax^{1/2}(\Lambda)}{2} \norm{\zeta}_F^2,\\
        -\langle \dT{Q}{S}(X), X \rangle 
        & \ge \min_{i, j} f(\lambda_i, \lambda_j) \sum_{i, j = 1}^r \left(\frac{\Delta_{ij}}{\sqrt{\lambda_i \lambda_j}}\right)^2
        = \frac{\lmin^{1/2}(\Lambda)}{2} \norm{\zeta}_F^2.
    \end{align*}
    Note, that if \(S\) is degenerated, the lower bound becomes trivial.

    \subsubsection*{{(IV) Homogeneity and (V) monotonicity}}
    Homogeneity follows directly from definition~\eqref{eq:DT}. 
    Now we prove monotonicity. As range of \(S^{1/2} Q S^{1/2}\) is fixed, we may assume \(S \succ 0\).
    Consider
    \begin{align*}
        \langle \dT{Q}{S}(X), X \rangle 
        & = \tr \left( S^{1/2} U^* \Lambda^{-1/2} \delta \Lambda^{-1/2} U S^{1/2}, X \right) \\
        & = \left\langle U^* \Lambda^{-1/2} \delta \Lambda^{-1/2} U, S^{1/2} X S^{1/2} \right\rangle \\
        & = \left\langle \o{d_{Q}}\left(S^{1/2} Q S^{1/2}\right)^{-1/2}(X), S^{1/2} X S^{1/2} \right\rangle \\
        & = \left\langle \o{d_{M}} M^{-1/2}\left(S^{1/2} X S^{1/2}\right), S^{1/2} X S^{1/2} \right
        \rangle,
    \end{align*}
    with replacement
    \(M = S^{1/2} Q S^{1/2}\) to be change of variables.
    As soon as~\(X\) is supposed to be fixed, 
    it is enough to show that the differential \(\o{d_M} M^{-1 / 2}\) is monotone in~\(M\).
    Notice that the operator \(\left(\o{d_M} M^{-1 / 2}\right)^{-1}\) at point~$M$ is equal to the differential of the inverse map \(P \mapsto P^{-2}\) at point \(P = M^{-1 / 2}\):
    \[
    \o{d_M} M^{-1 / 2}
    = \left(\left.\o{d_P} P^{-2}\right|_{P = M^{-1/2}}\right)^{-1}.
    \]
    In turn,
    \(\o{d_P} P^{-2}\) can be expressed as
    \[
    \o{d_P} P^{-2}(X) = - P^{-1} \left(P^{-1} X + X P^{-1}\right) P^{-1},
    \]
    the right part of the above equation is self-adjoint, negative-definite and
    \[
    \left\langle - P^{-1} \left(P^{-1} X + X P^{-1}\right) P^{-1}, X \right\rangle 
    = - 2 \tr P^{-2} X P^{-1} X.
    \]
    Choose \(M_1 \succcurlyeq M_0 \succ 0\) (thus \(M_1^{1 / 2} \succcurlyeq M_0^{1 / 2}\)) and let \(P_i = M_i^{-1 / 2}\) for \(i = 0, 1\). 
    Then for any fixed \(X \in \H(d)\)
    \[
    - \tr P_1^{-2} X P_1^{-1} X 
    = - \tr M_1 X M_1^{1 / 2} X 
    \le - \tr M_0 X M_0^{1 / 2} X
    = - \tr P_0^{-2} X P_0^{-1} X, 
    \]
    i.e.\ 
    \(\left.\o{d_P} P^{-2}\right|_{P_1} \preccurlyeq \left.\o{d_P} P^{-2}\right|_{P_0}\) 
    and hence for the differential of \(M \mapsto M^{-1 / 2}\) the inverse inequality holds: 
    \(\left.\o{d_M} M^{- 1 / 2}\right|_{M_0} \preccurlyeq \left.\o{d_M} M^{- 1 / 2}\right|_{M_1}\). This entails monotonicity of \(\dT{Q}{S}\).
\end{proof}

\begin{corollary}
\label{corollary:dt}
    We define a following rescaled operator
    \begin{equation}
    \label{def:dt}
        \dt{Q}{S}(\zeta) \eqdef Q^{1/2} \dT{Q}{S}\left(Q^{1/2} \zeta Q^{1/2}\right) Q^{1/2}, \quad \zeta \in \H(d).
    \end{equation}
    Then a following bound on its eigenvalues hold:
    \begin{align*}
        & \lmin\left(-\dt{Q}{S}\right) = \frac{1}{2} \lmin^{1/2}\left(S^{1/2} Q S^{1/2}\right), \\
        & \lmax\left(-\dt{Q}{S}\right) = \frac{1}{2} \lmax^{1/2}\left(S^{1/2} Q S^{1/2}\right).
    \end{align*}
\end{corollary}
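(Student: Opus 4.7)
The plan is to exploit self-adjointness of \(\dt{Q}{S}\) and then compute its extreme Rayleigh quotients. Self-adjointness follows immediately from Lemma~\ref{lemma:dT}(I): since the sandwich map \(A \mapsto Q^{1/2} A Q^{1/2}\) is self-adjoint on \(\H(d)\), so is its two-sided conjugate with \(\dT{Q}{S}\); concretely, \(\langle \dt{Q}{S}(\zeta), \xi\rangle = \langle \dT{Q}{S}(Q^{1/2}\zeta Q^{1/2}), Q^{1/2}\xi Q^{1/2}\rangle\) is symmetric in \(\zeta, \xi\). By the Rayleigh principle it then suffices to identify the supremum and infimum of \(-\langle \dt{Q}{S}(\zeta), \zeta\rangle / \norm{\zeta}_F^2\) over nonzero \(\zeta \in \H(d)\).

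For the easy direction I would substitute \(X := Q^{1/2}\zeta Q^{1/2}\) into Lemma~\ref{lemma:dT}(III): this gives \(Q^{-1/2}XQ^{-1/2} = \zeta\) and \(-\langle \dT{Q}{S}(X), X\rangle = -\langle \dt{Q}{S}(\zeta), \zeta\rangle\), yielding
\[
\tfrac{1}{2}\lmin^{1/2}\!\left(S^{1/2}QS^{1/2}\right)\norm{\zeta}_F^2 \;\le\; -\langle \dt{Q}{S}(\zeta), \zeta\rangle \;\le\; \tfrac{1}{2}\lmax^{1/2}\!\left(S^{1/2}QS^{1/2}\right)\norm{\zeta}_F^2,
\]
which immediately gives \(\lmax(-\dt{Q}{S}) \le \tfrac{1}{2}\lmax^{1/2}(S^{1/2}QS^{1/2})\) and \(\lmin(-\dt{Q}{S}) \ge \tfrac{1}{2}\lmin^{1/2}(S^{1/2}QS^{1/2})\).

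The main obstacle is the matching reverse inequalities, i.e.\ exhibiting Hermitian extremisers that attain the bounds. For this I would revisit the identity~\eqref{eq:scalar_product} and introduce a polar decomposition \(S^{1/2}Q^{1/2} = U^{*}\Lambda^{1/2}W\) for some unitary \(W\) (valid on \(\range(S)\); in the degenerate case both sides of the claimed identity reduce to zero on \(\ker(S)\) and the argument restricts to \(\range(S)\)). A direct computation then gives \(\Delta = U S^{1/2} X S^{1/2} U^{*} = \Lambda^{1/2}(W\zeta W^{*})\Lambda^{1/2}\), so setting \(\eta := W\zeta W^{*}\) (which satisfies \(\norm{\eta}_F = \norm{\zeta}_F\) by unitarity of \(W\)) the quotient takes the diagonalised form
\[
\frac{-\langle \dt{Q}{S}(\zeta), \zeta\rangle}{\norm{\zeta}_F^2} \;=\; \frac{\sum_{i, j} \eta_{ij}^{2}\, f(\lambda_i,\lambda_j)}{\sum_{i, j} \eta_{ij}^{2}}, \qquad f(\lambda_i,\lambda_j) := \frac{\sqrt{\lambda_i\lambda_j}}{\sqrt{\lambda_i}+\sqrt{\lambda_j}},
\]
a convex combination of the \(f(\lambda_i,\lambda_j)\). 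A one-line monotonicity check shows that \(f\) is maximised on the diagonal at \(i = j = \arg\max_k \lambda_k\) with value \(\tfrac{1}{2}\lmax^{1/2}(\Lambda)\) and minimised on the diagonal at \(i = j = \arg\min_k \lambda_k\) with value \(\tfrac{1}{2}\lmin^{1/2}(\Lambda)\); choosing \(\eta = e_{i^{*}}e_{i^{*}}^{\T}\) at the corresponding index and setting \(\zeta := W^{*}\eta W \in \H(d)\) realises each extreme, completing the identification of \(\lmin(-\dt{Q}{S})\) and \(\lmax(-\dt{Q}{S})\).
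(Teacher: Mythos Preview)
Your proposal is correct and follows essentially the same route as the paper: the one-sided bounds come from Lemma~\ref{lemma:dT}(III), and sharpness is obtained by exhibiting rank-one Hermitian extremisers concentrated on the diagonal in the eigenbasis of \(S^{1/2}QS^{1/2}\). The polar decomposition \(S^{1/2}Q^{1/2}=U^{*}\Lambda^{1/2}W\) you introduce is simply a tidy repackaging of the paper's direct construction, which writes down \(\Delta^{k}=e_{k}e_{k}^{*}\), sets \(\zeta^{k}=Q^{-1/2}X^{k}Q^{-1/2}\), and reads off the Rayleigh quotient \(\lambda_{k}^{1/2}/2\) from~\eqref{eq:scalar_product}.
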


\begin{proof}
    Notice that inequalities
    \begin{align*}
        & \lmin\left(-\dt{Q}{S}\right) \ge \frac{1}{2} \lmin^{1/2}\left(S^{1/2} Q S^{1/2}\right), \\
        & \lmax\left(-\dt{Q}{S}\right) \le \frac{1}{2} \lmax^{1/2}\left(S^{1/2} Q S^{1/2}\right),
    \end{align*}
    are a trivial consequence of Lemma~\ref{lemma:dT} (III). 
    Now defining for any \(1 \le k \le \rank(S)\)
    \[
    \Delta_{i j}^k = 
    \begin{cases}
        1, & i = j = k,\\
        0, & \text{otherwise},
    \end{cases}, \quad 
    X^k = S^{-1/2} U \Delta^k U^* S^{-1/2}, \quad
    \zeta^k = Q^{-1/2} X^k Q^{-1/2}
    \]
    we obtain from~\eqref{eq:scalar_product} that
    \[
    - \left\langle \dt{Q}{S}(\zeta^k), \zeta^k \right\rangle
    = - \left\langle \dT{Q}{S}(X^k), X^k \right\rangle
    = \frac{\lambda_k^{1/2}}{2} \norm{\zeta^k}_F^2.
    \]
    Therefore, the above inequalities are sharp.
\end{proof}

    

\begin{lemma}
\label{lemma:integral_form}
    For any \(Q_0, Q_1 \in \H_{++}(d)\), \(S \in \H_+(d)\) consider
    \begin{equation}
    \label{def:Q'}
        Q_t \eqdef (1 - t) Q_0 + t Q_1, 
        \quad
        Q' \eqdef Q_0^{-1/2} Q_1 Q_0^{-1/2}.
    \end{equation}
    Then
    \begin{align*}
    \tag{I}
        \frac{2}{\lmin(Q') + {\lmin^{1/2}(Q')}} \dT{Q_0}{S}
        & \preccurlyeq \int_0^1 \dT{Q_t}{S} \diff t \\ 
        & \preccurlyeq \frac{2}{\lmax(Q') + {\lmax^{1/2}(Q')}} \dT{Q_0}{S} \\
        & \preccurlyeq \frac{1}{1 + 3 \norm{Q' - I} / 4} \dT{Q_0}{S}.
    \end{align*}
    Moreover, if \(\norm{Q' - I} < 1\), then
    \begin{equation*}
    \tag{II}
    \label{eq:dT_bounds}
        \int_0^1 \dT{Q_t}{S} \diff t
        \succcurlyeq \frac{1}{1 - \norm{Q' - I}} \dT{Q_0}{S}.
    \end{equation*}
\end{lemma}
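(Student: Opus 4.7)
The plan is to reduce the operator-valued integral bound to an eigenvalue-by-eigenvalue scalar estimate by exploiting the monotonicity in $Q$ and homogeneity of degree $-3/2$ of the map $Q \mapsto \dT{Q}{S}$ established in Lemma~\ref{lemma:dT}. I first rewrite $Q_t = Q_0^{1/2} P_t Q_0^{1/2}$ with $P_t \eqdef (1-t) I + t Q'$, so that $P_t$ shares its eigenbasis with $Q'$ and its extreme eigenvalues are $\lmin(P_t) = 1 + t(\lmin(Q') - 1)$ and $\lmax(P_t) = 1 + t(\lmax(Q') - 1)$, both strictly positive. Sandwiching $\lmin(P_t) I \preccurlyeq P_t \preccurlyeq \lmax(P_t) I$ and conjugating by $Q_0^{1/2}$ gives
\[
\lmin(P_t) \, Q_0 \preccurlyeq Q_t \preccurlyeq \lmax(P_t) \, Q_0.
\]

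I then apply monotonicity of $\dT{}{S}$ in $Q$ (Lemma~\ref{lemma:dT}(V)) followed by homogeneity $\dT{a Q}{S} = a^{-3/2} \dT{Q}{S}$ (Lemma~\ref{lemma:dT}(IV)) to obtain
\[
\lmin(P_t)^{-3/2} \dT{Q_0}{S} \preccurlyeq \dT{Q_t}{S} \preccurlyeq \lmax(P_t)^{-3/2} \dT{Q_0}{S}.
\]
The subtlety to keep track of is that $\dT{Q_0}{S}$ is negative semi-definite (Lemma~\ref{lemma:dT}(II)), so multiplying by a smaller positive scalar yields a larger operator in the Loewner order; the inequalities above reflect this reversal. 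Integrating over $t \in [0,1]$ and evaluating the scalar integral
\[
\int_0^1 \bigl(1 + t(a - 1)\bigr)^{-3/2} \diff t = \frac{2}{a + \sqrt{a}}
\]
at $a = \lmin(Q')$ and $a = \lmax(Q')$ (substitute $u = 1 + t(a-1)$, and extend by continuity at $a = 1$) delivers the first two inequalities in (I).

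The third inequality in (I) and inequality (II) then reduce to scalar comparisons of coefficients multiplying the negative semi-definite operator $\dT{Q_0}{S}$: for the former I must verify $2 + \tfrac{3}{2} \|Q' - I\| \geq \lmax(Q') + \sqrt{\lmax(Q')}$, and for the latter $\lmin(Q') + \sqrt{\lmin(Q')} \geq 2(1 - \|Q' - I\|)$. Both follow from a short case split on whether the relevant extreme eigenvalue of $Q'$ exceeds $1$: when $\lmax(Q') \geq 1$, use $\|Q' - I\| \geq \lmax(Q') - 1$ to reduce the first to $(\sqrt{\lmax(Q')} - 1)^2 \geq 0$; when $\lmax(Q') \leq 1$, observe $\lmax(Q') + \sqrt{\lmax(Q')} \leq 2$ and conclude. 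For (II), when $\lmin(Q') \geq 1$ one has $\lmin(Q') + \sqrt{\lmin(Q')} \geq 2 \geq 2(1 - \|Q'-I\|)$, while when $\lmin(Q') \leq 1$ the bound $\|Q'-I\| \geq 1 - \lmin(Q')$ combined with $\sqrt{x} \geq x$ for $x \in (0,1]$ yields $\lmin(Q') + \sqrt{\lmin(Q')} \geq 2 \lmin(Q') \geq 2(1 - \|Q'-I\|)$.

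The argument is essentially mechanical once these reductions are in place; the one place that requires care is consistent bookkeeping of the Loewner sign convention across each monotonicity and homogeneity step, since $\dT{Q_0}{S}$ being negative semi-definite means that a \emph{smaller} positive coefficient produces a \emph{larger} operator --- the inequalities on scalar coefficients and on operators consequently point in opposite directions, and this is easy to get backwards if one is not vigilant.
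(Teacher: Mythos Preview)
Your proof is correct and follows essentially the same route as the paper: rewrite \(Q_t = Q_0^{1/2}\bigl((1-t)I + tQ'\bigr)Q_0^{1/2}\), apply monotonicity and degree-\(-3/2\) homogeneity of \(\dT{Q}{S}\) from Lemma~\ref{lemma:dT} to sandwich \(\dT{Q_t}{S}\) between scalar multiples of \(\dT{Q_0}{S}\), and integrate the resulting scalar \(\int_0^1 (1+t(a-1))^{-3/2}\,dt = 2/(a+\sqrt{a})\). The only cosmetic difference is in the last step: the paper bounds \(\lmax(Q') \le 1 + \|Q'-I\|\), \(\lmin(Q') \ge 1 - \|Q'-I\|\) and then invokes \(\sqrt{1+x} \le 1 + x/2\) and \(\sqrt{1-x} \ge 1 - x\) directly, whereas you reach the same scalar inequalities via a short case split on whether the relevant extreme eigenvalue exceeds \(1\).
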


\begin{remark}
    The above inequality might seem confusing due to the fact that \(\lmin(\cdot) \leq \lmax(\cdot)\),
    however this is explained by the fact that \(\dT{Q}{S}\) is \emph{negative} definite.
\end{remark}

\begin{proof}
    Notice that
    \[
    Q_t = Q_0^{1/2} \bigl((1 - t) I + t Q'\bigr) Q_0^{1 / 2}.
    \]
    Monotonicity and homogeneity with degree \(-\frac{3}{2}\) 
    of \(\dT{Q}{S}(\cdot)\) (see Lemma~\ref{lemma:dT}) yield
    \begin{align*}
        \dT{Q_t}{S}
        & \preccurlyeq \dT{((1 - t) + t \lmax(Q')) Q_0}{S} \\
        & = \bigl((1 - t) + t \lmax(Q')\bigr)^{- 3 / 2} \dT{Q_0}{S}
    \end{align*}
    and 
    \begin{align*}
        \dT{Q_t}{S}
        & \succcurlyeq \dT{((1 - t) + t \lmin(Q')) Q_0}{S} \\
        & = \bigl((1 - t) + t \lmin(Q')\bigr)^{- 3 / 2} \dT{Q_0}{S}.
    \end{align*}
    Therefore,
    \begin{align*}
        \int_0^1 \dT{Q_t}{S} \diff t
        & \preccurlyeq \dT{Q_0}{S} \int_0^1 \bigl((1 - t) + t \lmax(Q')\bigr)^{- 3 / 2} \diff t \\
        {} & = \frac{2}{\lmax(Q') + {\lmax^{1/2}(Q')}} \dT{Q_0}{S}
    \end{align*}
    and respectively,
    \[
    \int_0^1 \dT{Q_t}{S} \diff t
    \succcurlyeq \frac{2}{\lmin(Q') + {\lmin^{1/2}(Q')}} \dT{Q_0}{S}.
    \]
    The rest of the Lemma follow from the fact that
    \[
    \lmin(Q') \ge 1 - \norm{Q' - I}, \quad
    \lmax(Q') \le 1 + \norm{Q' - I}
    \]
    and inequalities
    \begin{gather*}
        \sqrt{1 + x} \le 1 + \frac{x}{2} \text{~~for~~} x \ge 0,\\
        \sqrt{1 - x} \ge 1 - x \text{~~for~~} 0 \le x \le 1.
    \end{gather*}
\end{proof}

\subsection{Properties of \(d_{BW}(Q, S)\)}
\label{sec:propeties_dbw_proof}
The next lemma ensures strict convexity of \(d_{BW}(Q, S)\).
In essence, the proof mainly relies on Theorem 7~\cite{bhatia2018bures}.
\begin{lemma}
\label{lemma:strict_convexity}
    For any \(S \in \H_{+}(d)\) function \(Q \mapsto d_{BW}^2(Q, S)\) is convex on \(\H_{+}(d)\). Moreover, if \(S \succ 0\), then it is strictly convex.
\end{lemma}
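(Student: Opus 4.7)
The plan is to prove convexity of $Q \mapsto d_{BW}^2(Q,S)$ by differentiating twice along arbitrary line segments and showing the second derivative is nonnegative (resp.\ strictly positive when $S\succ 0$), using the Fr\'echet differentiability of $T_Q^S$ from Lemma~\ref{lemma:taylor_T} together with the sign/lower bound properties of $\dT{Q}{S}$ established in Lemma~\ref{lemma:dT}.

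First I would restrict attention to the open cone $Q \in \H_{++}(d)$ and use the identity $d_{BW}^2(Q,S) = \tr Q + \tr S - 2\tr(Q T_Q^S)$, which follows from Lemma~\ref{lemma:def_T} together with $T_Q^S Q T_Q^S = S$. Differentiating this expression with the help of the expansion $T_{Q+X}^S = T_Q^S + \dT{Q}{S}(X) + o(\norm{X})$ produces, a priori, an extra term $-2\tr(Q\,\dT{Q}{S}(X))$. A direct computation in the eigenbasis of $S^{1/2}QS^{1/2}$ (the one introduced in Lemma~\ref{lemma:taylor_T}) shows the cancellation identity $\tr(X T_Q^S) = -2\tr(Q\,\dT{Q}{S}(X))$, which collapses the first variation to
\[
\nabla_Q d_{BW}^2(Q,S) = I - T_Q^S.
\]
A second differentiation along the segment $Q_t = (1-t)Q_0 + tQ_1$ then yields $\phi''(t) = -\langle \dT{Q_t}{S}(Q_1-Q_0),\,Q_1-Q_0\rangle$, which is nonnegative by Lemma~\ref{lemma:dT}(II). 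This proves convexity on $\H_{++}(d)$, and joint continuity of $(Q,S) \mapsto d_{BW}^2(Q,S)$ extends it to the closure $\H_+(d)$.

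For the strict convexity claim when $S\succ 0$, Lemma~\ref{lemma:dT}(III) strengthens the bound to $\phi''(t) \geq \tfrac{1}{2}\lmin^{1/2}(S^{1/2}Q_t S^{1/2}) \norm{Q_t^{-1/2}(Q_1-Q_0)Q_t^{-1/2}}_F^2 > 0$ whenever $Q_0 \neq Q_1$ and $Q_t \succ 0$. If the segment passes through $\H_{++}(d)$ this is immediate. The remaining boundary case is $V \eqdef \ker Q_0 \cap \ker Q_1 \neq \{0\}$, which I would handle by compressing to $V^\perp$: since $Q_t|_V = 0$ for all $t$, a short calculation with $\Pi = \Pi_{V^\perp}$ gives $d_{BW}^2(Q_t,S) = d_{BW}^2(\Pi Q_t \Pi, \Pi S \Pi)_{V^\perp} + \mathrm{const}$, with $\Pi S \Pi \succ 0$ on $V^\perp$ and $\ker(\Pi Q_0 \Pi)\cap \ker(\Pi Q_1 \Pi) = \{0\}$; so $\Pi Q_t \Pi \succ 0$ on $V^\perp$ for $t\in(0,1)$ and the interior argument gives $\phi''(t) > 0$ on $(0,1)$. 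A convex $\phi$ on $[0,1]$ with strictly positive second derivative on $(0,1)$ is strictly convex on the whole $[0,1]$, since affinity on any subinterval $[a,b]\subseteq [0,1]$ with $a<b$ would force $\phi''\equiv 0$ on the nonempty open set $(a,b)\cap(0,1)$.

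The main obstacle is the gradient calculation in paragraph two: the identity $\tr(X T_Q^S) = -2\tr(Q\,\dT{Q}{S}(X))$ is what makes the Hessian coincide cleanly with $-\dT{Q}{S}$, and without it the linearised expansion carries a stray term that obscures the sign. Unpacking both sides in the eigenbasis of $S^{1/2}QS^{1/2}$ reduces the verification to the trivial $\sum_i \Delta_{ii}/\sqrt{\lambda_i}$ computation; once this is in hand, the rest of the proof (and the compression argument for the boundary of $\H_+(d)$) is routine.
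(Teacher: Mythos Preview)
Your proposal is correct, but the paper takes a much shorter route. It invokes \citep[Theorem~7]{bhatia2018bures}, which states directly that \(X\mapsto \tr X^{1/2}\) is strictly concave on \(\H_+(d)\). Writing \(d_{BW}^2(Q,S)=\tr S+\tr Q-2\tr(S^{1/2}QS^{1/2})^{1/2}\), convexity on all of \(\H_+(d)\) follows at once, and when \(S\succ 0\) the linear map \(Q\mapsto S^{1/2}QS^{1/2}\) is injective, so strict concavity of \(\tr(\cdot)^{1/2}\) transfers to strict convexity of \(d_{BW}^2(\cdot,S)\) on the whole of \(\H_+(d)\) without any separate boundary analysis. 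Your argument instead computes the Hessian explicitly as \(-\dT{Q}{S}\) via Lemmas~\ref{lemma:taylor_T} and~\ref{lemma:dT}; this is self-contained (no external reference) and effectively reproves the first-order part of Lemma~\ref{lemma:quadratic_approx} along the way, but it forces you to work on \(\H_{++}(d)\) and then patch the boundary with the compression to \(V^\perp\). Two minor remarks: the cancellation identity \(\tr(XT_Q^S)=-2\tr(Q\,\dT{Q}{S}(X))\) is a somewhat roundabout way to reach \(\nabla_Q d_{BW}^2=I-T_Q^S\); differentiating \(\tr(S^{1/2}QS^{1/2})^{1/2}\) directly via Lemma~\ref{lemma:taylor_root} (as the paper does in Lemma~\ref{lemma:quadratic_approx}) avoids it. And the paper's one-line injectivity argument for strict convexity covers your boundary case \(V\neq\{0\}\) for free, whereas you need an extra paragraph.
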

\begin{proof}
    According to \citep[Theorem 7]{bhatia2018bures} function \(h(X) = \tr X^{1/2}\) is strictly concave on \(\H_{+}(d)\), hence function
    \[
    Q \mapsto d_{BW}^2(Q, S) 
    = \tr S + \tr Q - 2 \tr \left(S^{1/2} Q S^{1/2}\right)^{1/2}
    \]
    is convex on \(\H_{+}(d)\) for any positive semi-definite \(S\). Moreover, if \(S \succ 0\), then \(Q \mapsto S^{1/2} Q S^{1/2}\) is an injective linear map, and therefore \(d_{BW}^2(Q, S)\) is strictly convex.
\end{proof}

Further we present differentiability of \(d_{BW}(Q, S)\) and its quadratic approximation.
\begin{lemma}
\label{lemma:quadratic_approx}
    For any \(Q \in \H_{++}(d)\), \(S \in \H_+(d)\) function \(d_{BW}^2(Q, S)\) is twice differentiable in $Q$ with
    \begin{align*}
        & \o{d_Q} d_{BW}^2(Q, S) (X) = \langle I - T_Q^S, X \rangle, & X \in \H(d),\\
        & \o{d^2_Q} d_{BW}^2(Q, S) (X, Y) = - \langle X, \dT{Q}{S}(Y) \rangle, 
        & X, Y \in \H(d).
    \end{align*}
    Moreover, the following quadratic approximation holds: for any \(Q_0, Q_1 \in \H_{++}(d)\)
    \begin{align*}
        - \tfrac{2}{\left(1 + \lmax^{1/2}(Q')\right)^2} & \left\langle \dT{Q_0}{S}(Q_1 - Q_0), Q_1 - Q_0 \right\rangle \\
        & \le d_{BW}^2(Q_1, S) - d_{BW}^2(Q_0, S) + \langle T_{Q_0}^S - I, Q_1 - Q_0 \rangle \\
        & \le - \tfrac{2}{\left(1 + \lmin^{1/2}(Q')\right)^2} \left\langle \dT{Q_0}{S}(Q_1 - Q_0), Q_1 - Q_0 \right\rangle.
    \end{align*}
    with \(Q'\) defined in~\eqref{def:Q'}.
\end{lemma}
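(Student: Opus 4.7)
The plan has three logical stages: compute the first derivative, differentiate once more to get the second derivative, then use Taylor's theorem with integral remainder combined with monotonicity of $\dT{Q}{S}$ (Lemma~\ref{lemma:dT}) to get the sharp bounds.

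\textbf{First derivative.} Starting from the original form $d_{BW}^2(Q,S) = \tr Q + \tr S - 2\tr(S^{1/2} Q S^{1/2})^{1/2}$, the only non-trivial term is $\tr(S^{1/2} Q S^{1/2})^{1/2}$. I apply the functional-calculus identity $\o{d_M}\tr f(M)(\delta M) = \tr(f'(M) \delta M)$ with $f(x)=\sqrt{x}$ (interpreted via pseudo-inverse on the range of $S$, as in~\eqref{def:T_Q}), $M = S^{1/2} Q S^{1/2}$, and $\delta M = S^{1/2} X S^{1/2}$. Cyclicity of trace gives
\[
\o{d_Q}\tr(S^{1/2} Q S^{1/2})^{1/2}(X) = \tfrac{1}{2}\tr\bigl(S^{1/2}(S^{1/2} Q S^{1/2})^{-1/2} S^{1/2} X\bigr) = \tfrac{1}{2}\tr(T_Q^S X),
\]
so $\o{d_Q}d_{BW}^2(Q,S)(X) = \tr X - \tr(T_Q^S X) = \langle I - T_Q^S, X\rangle$.

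\textbf{Second derivative.} I differentiate $Q \mapsto \langle I - T_Q^S, X\rangle$ in direction $Y$, invoking Fr\'echet-differentiability of $T_Q^S$ from Lemma~\ref{lemma:taylor_T}. This immediately yields $\o{d^2_Q} d_{BW}^2(Q,S)(X,Y) = -\langle \dT{Q}{S}(Y), X\rangle$, which by the self-adjointness of $\dT{Q}{S}$ (Lemma~\ref{lemma:dT}(I)) equals $-\langle X, \dT{Q}{S}(Y)\rangle$.

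\textbf{Quadratic approximation via Taylor with integral remainder.} Setting $\varphi(t) \eqdef d_{BW}^2(Q_t, S)$ and using $\varphi'(t) = \langle I - T_{Q_t}^S, Q_1 - Q_0\rangle$, $\varphi''(t) = -\langle \dT{Q_t}{S}(Q_1 - Q_0), Q_1 - Q_0\rangle$, the standard identity $\varphi(1) = \varphi(0) + \varphi'(0) + \int_0^1 (1 - t)\varphi''(t)\,dt$ gives
\[
d_{BW}^2(Q_1, S) - d_{BW}^2(Q_0, S) + \langle T_{Q_0}^S - I, Q_1 - Q_0\rangle = -\int_0^1 (1 - t) \langle \dT{Q_t}{S}(Q_1 - Q_0), Q_1 - Q_0\rangle \,dt.
\]

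\textbf{Sharp scalar bounds.} Writing $Q_t = Q_0^{1/2}\bigl((1-t)I + t Q'\bigr) Q_0^{1/2}$ with $\lmin(Q') I \preccurlyeq (1-t)I + t Q' \preccurlyeq \lmax(Q') I$, the monotonicity and $(-3/2)$-homogeneity from Lemma~\ref{lemma:dT}(IV,V) yield
\[
\bigl((1-t)+t\lmax(Q')\bigr)^{-3/2}\dT{Q_0}{S} \succcurlyeq \dT{Q_t}{S} \succcurlyeq \bigl((1-t)+t\lmin(Q')\bigr)^{-3/2}\dT{Q_0}{S},
\]
where the $\succcurlyeq$ directions are reversed for $-\dT{}$. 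Pairing with $Q_1 - Q_0$, integrating against $(1-t)$, and evaluating the scalar integral by the substitution $u = (1-t) + ta$ (which leads to $\int_0^1 (1-t)((1-t)+ta)^{-3/2}\,dt = \frac{2}{(1 + \sqrt{a})^2}$ for $a > 0$) gives exactly the two claimed bounds. The main technical step is this last integral evaluation; monotonicity in $\dT{Q}{S}$ does the heavy lifting conceptually, and the rest is elementary calculus.
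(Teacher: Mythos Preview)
Your proof is correct and follows essentially the same route as the paper: Taylor expansion with integral remainder, then the monotonicity and $(-3/2)$-homogeneity of $\dT{Q}{S}$ from Lemma~\ref{lemma:dT}(IV,V) to reduce the operator integral to the scalar integral $\int_0^1(1-t)\bigl((1-t)+ta\bigr)^{-3/2}\,dt=\tfrac{2}{(1+\sqrt a)^2}$. The only cosmetic difference is that for the first derivative the paper goes through the explicit square-root differential of Lemma~\ref{lemma:taylor_root} and sums the diagonal entries $\tfrac{\Delta_{ii}}{2\sqrt{\lambda_i}}$, whereas you invoke the trace-functional-calculus identity $\o{d_M}\tr f(M)(\delta M)=\tr\bigl(f'(M)\,\delta M\bigr)$ directly; both arrive at $\tfrac12\langle T_Q^S,X\rangle$.
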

\begin{proof}
    Note that
    \[
    \o{d_Q} \left(S^{1/2} Q S^{1/2}\right)^{1/2} (X) 
    = U^* \delta U,
    \]
    where \(\delta\) comes from Lemma~\ref{lemma:taylor_T}. 
    Furthermore, Lemma~\ref{lemma:taylor_root} implies that
    \begin{align*}
        \o{d_Q} \tr \left(S^{1/2} Q S^{1/2}\right)^{1/2} (X)
        & = \tr \o{d_Q} \left(S^{1/2} Q S^{1/2}\right)^{1/2} (X) 
        = \tr \delta \\
        & = \sum_{i = 1}^{\rank(S)} \frac{\Delta_{i i}}{2 \sqrt{\lambda_i}} 
        = \frac{1}{2} \tr \Delta \Lambda^{-1/2} \\
        & = \frac{1}{2} \tr S^{1/2} X S^{1/2} \left(S^{1/2} Q S^{1/2}\right)^{-1/2}
        = \frac{1}{2} \left\langle T_Q^S, X \right\rangle.
    \end{align*}
    Consequently, \(d_{BW}^2(Q, S)\) is differentiable and
    \begin{align*}
        \o{d_Q} d_{BW}^2(Q, S) (X) 
        = \tr X - 2 \o{d_Q} \tr \left(S^{1/2} Q S^{1/2}\right)^{1/2} (X) 
        = \left\langle I - T_Q^S, X \right\rangle.
    \end{align*}
    Applying Lemma~\ref{lemma:taylor_T} one obtains
    \[
    \o{d^2_Q} d_{BW}^2(Q, S) (X, Y) 
    = \o{d_Q} \left\langle I - T_Q^S, X \right\rangle
    = - \left\langle \dT{Q}{S}(Y), X \right\rangle (Y).
    \]
    
    \paragraph{Quadratic approximation}
    Let \(Q_0, Q_1 \in \H_{++}(d)\), \(Q_t \eqdef (1 - t) Q_0 + t Q_1\), \(t \in [0, 1]\).
    The Taylor expansion in integral form applied to \(d_{BW}^2(Q_t, S)\) implies
    \begin{align*}
        d_{BW}^2(Q_1, S) 
        & = d_{BW}^2(Q_0, S) + \left\langle I - T_{Q_0}^S, Q_1 - Q_0 \right\rangle\\
        & \quad \quad + \int_0^1 (1 - t) \left\langle -\dT{Q_t}{S}(Q_1 - Q_0), Q_1 - Q_0 \right\rangle \diff t \\
        & = d_{BW}^2(Q_0, S) - \left\langle T_{Q_0}^S - I, Q_1 - Q_0 \right\rangle \\
        & \quad \quad - \left\langle \left[\int_0^1 (1 - t) \dT{Q_t}{S}\diff t \right] (Q_1 - Q_0), Q_1 - Q_0 \right\rangle.
    \end{align*}
    Following the same ideas as in the proof of Lemma~\ref{lemma:integral_form} one obtains that
    \begin{align*}
        \int_0^1 (1 - t) \dT{Q_t}{S} \diff t 
        & \preccurlyeq \int_0^1 (1 - t) \bigl((1 - t) + t \lmax(Q')\bigr)^{- 3 / 2} \dT{Q_0}{S} \diff t \\
        & = \tfrac{2}{\left(1 + \lmax^{1/2}(Q')\right)^2} \dT{Q_0}{S}
    \end{align*}
    and 
    \[
    \int_0^1 (1 - t) \dT{Q_t}{S} \diff t 
    \succcurlyeq \tfrac{2}{\left(1 + \lmin^{1/2}(Q')\right)^2} \dT{Q_0}{S}.
    \]
    Thus
    \begin{align*}
        - \tfrac{2}{\left(1 + \lmax^{1/2}(Q')\right)^2} & \left\langle \dT{Q_0}{S}(Q_1 - Q_0), Q_1 - Q_0 \right\rangle \\
        & \le d_{BW}^2(Q_1, S) - d_{BW}^2(Q_0, S) + \langle T_{Q_0}^S - I, Q_1 - Q_0 \rangle \\
        & \le - \tfrac{2}{\left(1 + \lmin^{1/2}(Q')\right)^2} \left\langle \dT{Q_0}{S}(Q_1 - Q_0), Q_1 - Q_0 \right\rangle.
    \end{align*}
\end{proof}

\subsection{Central limit theorem for \(Q_n\)}
\label{section:CLT_proof}

First let us prove uniqueness and positive-definiteness of Bures--Wasserstein barycenter.
\begin{proof}[Proof of Theorem~\ref{thm:uniqueness}]
    Since 
    \[
    \E d_{BW}^2(0, S) = \E \tr S < \infty
    \]
    and \(d_{BW}(Q, S) \to \infty\) as \(\norm{Q} \to \infty\), the barycenter \(Q_*\) always exists by continuity and compactness argument.
    In case \(\P(\H_{++}(d)) > 0\) applying Lemma~\ref{lemma:strict_convexity} we obtain strict convexity of the integral
    \[
    Q \mapsto \E d_{BW}^2(Q, S) = \mathcal{V}(Q), \quad Q \in \H_+(d),
    \]
    and therefore, uniqueness of the minimizer \(Q_*\). 

    To prove that \(Q_* \succ 0\) consider arbitrary degenerated \(Q_0 \in \H_+(d) \cap \A\), \(Q_1 \in \H_{++}(d) \cap \A\) (which exists by Assumption~\ref{asm:main}) and \(S \in \H_{++}(d)\). Let us define \(Q_t = (1 - t) Q_0 + t Q_1 \in \A\). We are going to show, that 
    \[
    \frac{d}{d t} d_{BW}^2(Q_t, S) = \langle I - T_{Q_t}^S, Q_1 - Q_0 \rangle \to - \infty \quad \text{as} \quad t \to 0.
    \]
    Consider eigen-decomposition \(S^{1/2} Q_0 S^{1/2} = U^* \Lambda U\), \(\Lambda = \diag(\lambda_1, \dots, \lambda_r, 0, \dots, 0)\), where \(r = \rank(Q_0)\). Respectively, we write \(C = U S^{1/2} Q_1 S^{1/2} U^*\) in a block form 
    \[
    C = 
    \begin{pmatrix}
        C_{1 1} & C_{1 2} \\
        C_{2 1} & C_{2 2}
    \end{pmatrix}, \quad
    C_{1 1} \in \H_{++}(r), \;
    C_{1 2} = C_{2 1}^* \in \C^{r \times (d - r)}, \;
    C_{2 2} \in \H_{++}(d - r).
    \]
    Cramer's rule for inverse matrix and Laplace's formula yield
    \[
    U \left(S^{1/2} Q_t S^{1/2}\right)^{-1} U^*
    = \bigl((1 - t) \Lambda + t C\bigr)^{-1}
    = \begin{pmatrix}
        O(1) & O(1) \\
        O(1) & C_{2 2}^{-1} / t + O(1)
    \end{pmatrix}, \text{ as } t \to 0,
    \]
    therefore
    \[
    \sqrt{t} U \left(S^{1/2} Q_t S^{1/2}\right)^{-1/2} U^* \to 
    \begin{pmatrix}
        0 & 0 \\
        0 & C_{2 2}^{-1/2}
    \end{pmatrix}.
    \]
    Consequently,
    \begin{align*}
        \sqrt{t} \left\langle T_{Q_t}^S, Q_0 \right\rangle 
        & = \sqrt{t} \left\langle \left(S^{1/2} Q_t S^{1/2}\right)^{-1/2}, S^{1/2} Q_0 S^{1/2} \right\rangle \\
        & = \left\langle \sqrt{t} U \left(S^{1/2} Q_t S^{1/2}\right)^{-1/2} U^*, U S^{1/2} Q_0 S^{1/2} U^* \right\rangle \\
        & = \left\langle 
        \begin{pmatrix}
            o(1) & o(1) \\
            o(1) & C_{2 2}^{-1/2}
        \end{pmatrix}, 
        \begin{pmatrix}
            \diag(\lambda_1, \dots, \lambda_r) & 0 \\
            0 & 0
        \end{pmatrix} 
        \right\rangle 
        = o(1).
    \end{align*}
    In the same way one can obtain
    \begin{align*}
        \sqrt{t} \left\langle T_{Q_t}^S, Q_1 \right\rangle 
        = \left\langle 
        \begin{pmatrix}
            o(1) & o(1) \\
            o(1) & C_{2 2}^{-1/2}
        \end{pmatrix},
        \begin{pmatrix}
            C_{1 1} & C_{1 2} \\
            C_{2 1} & C_{2 2}
        \end{pmatrix} 
        \right\rangle
        = \tr C_{2 2}^{1/2} + o(1).
    \end{align*}
    Consequently,
    \[
    \frac{d}{d t} d_{BW}^2(Q_t, S) 
    = \langle I - T_{Q_t}^S, Q_1 - Q_0 \rangle
    = \tr Q_1 - \tr Q_0 - \frac{\tr C_{2 2}^{1/2} + o(1)}{\sqrt{t}} 
    \to - \infty.
    \]
    Since \(\P(\H_{++}(d)) > 0\) by Assumption~\ref{asm:main} and \(d_{BW}^2(Q, S)\) is convex, we conclude that
    \[
    \frac{d}{d t} \mathcal{V}(Q_t) 
    = \E \frac{d}{d t} d_{BW}^2(Q_t, S) 
    \to - \infty \quad \text{as} \quad t \to 0,
    \]
    thus \(Q_0\) cannot be a barycenter of $\P$. This yields \(Q_* \succ 0\).

    Since $\mathcal{V}(\cdot)$ is convex and barycenter of $\P$ is positive-definite and unique, it is characterized as a stationary point of Fr\'echet variation on subspace $\A$, i.e.\ as a solution to equation
    \[
    \o{\Pi}_\M \nabla \mathcal{V}(Q) = \o{\Pi}_\M (I - \E T_Q^S) = 0, \quad Q \in \A \cap \H_{++}(d),
    \]
    as required.
\end{proof}


The proof of CLT widely uses 
the concept of covariance operators
on the space of optimal transportation maps
and on the space of covariance matrices.

\paragraph{Covariance operator on the space of optimal maps}
Consider \(T_i \eqdef T_{Q_*}^{S_i}\) and \(T^n_i \eqdef T_{Q_n}^{S_i}\).
We define covariance \(\o{\Sigma}\) of \(T_i\), its empirical counterpart \(\o{\Sigma_n}\), 
and its data-driven estimator \(\o{\hat{\Sigma}_n}\)
as follows
\begin{gather}
    \label{def:covariance}
    \o{\Sigma} \eqdef \var(T_i), 
    \quad
    \o{\Sigma_n} \eqdef \frac{1}{n} \sum_{i = 1}^n \left(T_i - I\right) \otimes \left(T_i - I\right), \nonumber \\
    \o{\hat{\Sigma}_n} \eqdef \frac{1}{n} \sum_{i = 1}^n \left(T^n_i - I\right) \otimes \left(T^n_i - I\right).
\end{gather}

\paragraph{Covariance operators on the space of covariance matrices}
Let \(Q_n\) be an empirical barycenter.
The covariance of \(Q_n\) and its empirical counterpart
are defined as
\begin{equation}
\label{def:Upxi}
    \o{\Upxi} \eqdef \o{F}^{-1} (\o{\Sigma})_\M \o{F}^{-1},
    \quad
    \o{\Upxi} \colon \M \rightarrow \M,
\end{equation}
\begin{equation}
\label{def:Upxi_n}
    \o{\hat{\Upxi}_n} \eqdef \o{\hat{F}_n}^{-1} (\o{\hat{\Sigma}_n})_\M \o{\hat{F}_n}^{-1}, \quad
    \o{\hat{\Upxi}_n} \colon \M \rightarrow \M,
\end{equation}
where
\begin{gather}
    \label{def:F}
    \o{F} \eqdef - \E \left(\dT{Q_*}{S}\right)_\M \quad
    \o{F_n} \eqdef - \frac{1}{n} \sum_{i = 1}^n \left(\dT{Q_*}{S_i}\right)_\M,\\
    \label{def:Fnhat}
    \o{\hat{F}_n} \eqdef - \frac{1}{n} \sum_{i = 1}^n \left(\dT{Q_n}{S_i}\right)_\M.
\end{gather}

Now we are ready to prove the central limit theorem for the
empirical barycenter \(Q_n\) (Theorem~\ref{thm:CLT}).
However, for the sake of transparency we provide below
a complete statement.
\begin{theorem*}[Central limit theorem for the covariance of empirical barycenter]
\label{thm:CLT_complete}
    The approximation error rate of the Fr\'echet mean \(Q_*\)
    by its empirical counterpart \(Q_n\) is
    \begin{equation}
    \label{eq:A}
        \tag{A} 
        \sqrt{n} \left(Q_n - Q_*\right) \rightharpoonup \ND\left(0, \o{\Upxi}\right),
    \end{equation}
    Moreover, {if \((\o{\Sigma})_\M\) is non-degenerated, then}
    \begin{equation}
    \label{eq:B}
        \tag{B}
        \sqrt{n} \o{\hat{\Upxi}}^{-1/2}_n \left(Q_n - Q_*\right) \rightharpoonup \ND\left(0, (\Id)_\M\right).
    \end{equation}
\end{theorem*}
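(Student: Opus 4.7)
The plan is to treat $Q_n$ as a Z-estimator arising from the first-order optimality condition given in Theorem~\ref{thm:uniqueness}, apply the one-term Fréchet expansion of $T_Q^S$ in $Q$ around $Q_*$ from Lemma~\ref{lemma:taylor_T}, and conclude by a classical vector CLT for i.i.d.\ matrices together with Slutsky. Part (B) will then follow from establishing consistency of the plug-in estimators $\o{\hat{F}_n}$ and $\o{\hat{\Sigma}_n}$. The preliminary step is to show that $Q_n \to Q_*$ almost surely: since $Q \mapsto \mathcal{V}_n(Q)$ is convex by Lemma~\ref{lemma:strict_convexity}, converges pointwise to $\mathcal{V}$ by the law of large numbers under Assumption~\ref{asm:main_0}, and $\mathcal{V}$ is uniquely minimised on $\A$ at the positive-definite matrix $Q_*$, the standard argmin-of-convex argument applies. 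Strict positivity of $Q_*$ then ensures that we stay inside $\H_{++}(d) \cap \A$ eventually, where Lemma~\ref{lemma:taylor_T} is in force.

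Because $Q_*, Q_n \in \A = Q_0 + \M$, the increment $Q_n - Q_*$ lies in $\M$. Subtracting the population and empirical optimality conditions, and inserting the expansion $T_{Q_n}^{S_i} = T_{Q_*}^{S_i} + \dT{Q_*}{S_i}(Q_n - Q_*) + R_i$ with $\|R_i\| = o(\|Q_n - Q_*\|)$, gives
\[
\o{F_n}(Q_n - Q_*) = \o{\Pi_\M}\tfrac{1}{n}\sum_{i=1}^n \bigl(T_{Q_*}^{S_i} - \E T_{Q_*}^S\bigr) + \o{\Pi_\M}\tfrac{1}{n}\sum_{i=1}^n R_i,
\]
where $\o{F_n}$ is the operator from~\eqref{def:F} acting on $\M$, and I have used $\o{\Pi_\M}(\E T_{Q_*}^S - I) = 0$ from Theorem~\ref{thm:uniqueness}. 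The integral-form bound of Lemma~\ref{lemma:integral_form} together with the eigenvalue control of Lemma~\ref{lemma:dT}(III) lets me dominate the remainder by $C \|Q_n - Q_*\|$ times an empirical average of integrable quantities, so that consistency from the preceding step yields $\o{\Pi_\M}\tfrac{1}{n}\sum_i R_i = o_P(\|Q_n - Q_*\|)$.

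Applying the vector CLT to the i.i.d.\ matrices $T_i = T_{Q_*}^{S_i}$ (the second moment is finite thanks to Assumption~\ref{asm:main_0} and the closed form~\eqref{def:T_Q}) gives
\[
\sqrt{n}\,\o{\Pi_\M}\tfrac{1}{n}\sum_{i=1}^n (T_i - \E T_i) \rightharpoonup \ND\bigl(0, (\o{\Sigma})_\M\bigr).
\]
The strong law yields $\o{F_n} \to \o{F}$ a.s., and $\o{F}$ is invertible on $\M$: each $-\dT{Q_*}{S_i}$ is positive semi-definite by Lemma~\ref{lemma:dT}(II) and, thanks to Assumption~\ref{asm:main_0}, a positive-measure set of $S$ produces $-\dT{Q_*}{S}$ whose restriction to $\M$ is strictly positive, forcing $\o{F}|_\M \succ 0$. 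Inverting $\o{F_n}$ and invoking Slutsky yields
\[
\sqrt{n}(Q_n - Q_*) \rightharpoonup \o{F}^{-1}\ND\bigl(0,(\o{\Sigma})_\M\bigr) = \ND(0,\o{\Upxi}),
\]
which is (A). For (B) I would show $\o{\hat{F}_n} \to \o{F}$ and $\o{\hat{\Sigma}_n} \to \o{\Sigma}$ in probability: the first uses $Q_n \to Q_*$ together with continuity of $Q \mapsto \dT{Q}{S}$ on $\H_{++}(d)$ and uniform integrability from the bounds in Lemma~\ref{lemma:dT}(III); the second is analogous via continuity of $Q \mapsto T_Q^S$. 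Under non-degeneracy of $(\o{\Sigma})_\M$ the operator $\o{\Upxi}$ is invertible on $\M$ and $\o{B} \mapsto \o{B}^{-1/2}$ is continuous at $\o{\Upxi}$, so a second application of Slutsky to the conclusion of (A) delivers (B).

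The hardest part is controlling the linearisation remainder $\tfrac{1}{n}\sum_i R_i$ uniformly when only $\E\,\tr S < \infty$ is assumed, because individual $S_i$ may be close to singular and make $\dT{Q_*}{S_i}$ large; a clean argument probably proceeds by replacing the pointwise Taylor remainder with the integral form $\int_0^1 (\dT{Q_t}{S_i} - \dT{Q_*}{S_i})\,dt\,(Q_n-Q_*)$ from the proof of Lemma~\ref{lemma:quadratic_approx} and bounding it via Lemma~\ref{lemma:integral_form}, so that the stochastic smallness comes from consistency of $Q_n$ rather than from uniform control over all $S$. The other subtle point is strict positivity of $\o{F}$ on the constrained subspace $\M$ (as opposed to $\H(d)$), which must be extracted from Assumptions~\ref{asm:main_0} and~\ref{asm:main} via the sharp eigenvalue formula of Corollary~\ref{corollary:dt}.
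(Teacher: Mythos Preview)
Your proposal is correct and follows essentially the same Z-estimator route as the paper: consistency via convex argmin, the integral-form Taylor remainder controlled through Lemma~\ref{lemma:integral_form} so that the smallness is uniform in $S$ and driven only by $Q_n'\to I$, then Slutsky with the vector CLT for $\bar T_n$. The one place the paper argues slightly differently is the convergence $\o{\hat F}_n\to\o F$ in part~(B): rather than continuity plus uniform integrability, it uses the monotonicity and $(-3/2)$-homogeneity of $\dT{Q}{S}$ from Lemma~\ref{lemma:dT}(IV,V) to sandwich $\dT{Q_n}{S}$ between $\lmax(Q_n')^{-3/2}\dT{Q_*}{S}$ and $\lmin(Q_n')^{-3/2}\dT{Q_*}{S}$, which gives $\o{F_n}^{-1/2}\o{\hat F}_n\o{F_n}^{-1/2}\to(\Id)_\M$ directly.
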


\begin{proof}[Proof of Theorem~\ref{thm:CLT}]
The proof consists of two parts: proof of~\eqref{eq:A} and~\eqref{eq:B}.
\subsubsection*{Proof of~\eqref{eq:A}}
    As \(\mathcal{V}_n(\cdot)\) are convex functions, 
    they a.s.\ uniformly converge to strictly convex function \(\mathcal{V}(\cdot)\) on any compact set by the uniform law of large numbers. Therefore, their minimizers also converge a.s.\ \(Q_n \to Q_*\). In particular, \(Q_n \succ 0\) with dominating probability.

    Applying the expansion from Lemma~\ref{lemma:taylor_T} at point \(Q_*\) implies
    \begin{align}
    \label{eq:T_exp}
        T^n_i & = T_i + \int_0^1 \dT{Q_t}{S_i}(Q_n - Q_*) \diff t \\
        & = T_i + \dT{Q_*}{S_i}(Q_n - Q_*) + \alpha\bigl(Q_n - Q_*; Q_n, S_i\bigr), \nonumber
    \end{align}
    where \(Q_t = (1 - t) Q_* + t Q_n\) and \(\alpha(\cdot; Q_n, S_i)\) is a self-adjoint operator on \(\H(d)\) s.t.\ \(\left\langle X, \alpha\bigl(X; Q_n, S\bigr)\right \rangle = o\bigl(\bigl\langle X, \dT{Q_*}{S}(X) \bigr\rangle\bigr)\) as \(Q_n' \to I\) uniformly in \(S\) and \(X\) according to Lemma~\ref{lemma:integral_form}.
    Note, that the condition for \(Q_n\) being a barycenter 
    is \(\o{\Pi}_\M \left(\frac{1}{n} \sum_i T^n_i - I \right) = 0\).
    This fact together with averaging of~\eqref{eq:T_exp} over \(i\) give:
    \begin{equation}
    \label{eq:T_exp_2}
        \o{\Pi}_\M I = \o{\Pi}_\M \bar{T}_n
        - \o{F}_n (Q_n - Q_*)
        + \o{\Pi}_\M \alpha_n\bigl(Q_n - Q_*\bigr),
    \end{equation}
    where 
    \begin{equation}
        \label{def:mean_T}
        \bar{T}_n \eqdef \frac{1}{n} \sum_{i = 1}^n T_i,
        \quad
        \alpha_n(X) \eqdef \frac{1}{n}\sum^n_{i = 1}\alpha(X; Q_n, S_i),
    \end{equation}
    and \(\o{F}_n\) is defined in~\eqref{def:F}.
    Recall that \(\o{F}\)~\eqref{def:F} is a population counterpart of \(\o{F_n}\).
    This operator is correctly defined since by Lemma~\ref{lemma:dT}
    one can show that it is self-adjoint, positive definite and bounded:
    \[
    \norm{\o{F}} 
    \le \E \norm{\dT{Q_*}{S}}
    \le \E \frac{\norm{S^{1 / 2} Q_* S^{1 / 2}}}{2 \lmin^2(Q_*)} < \infty.
    \]
    
    Since by the law of large numbers \(\o{F}_n \to \o{F}\) and \(\alpha_n(X) = o\bigl(\norm{\o{F}_n(X)}\bigr)\), we obtain from~\eqref{eq:T_exp_2}
    \begin{align}
    \label{eq:decomp_Q}
        Q_n 
        = Q_* + \o{F}^{-1} \o{\Pi}_\M \left(\bar{T}_n - I\right)
        + o\left(\norm{\o{\Pi}_\M \left(\bar{T}_n - I\right)} \right),
    \end{align}
    where \(\o{F}^{-1}\) is a bounded linear operator, 
    because \(\dT{Q_*}{S}\) is negative definite for any \(S \succ 0\) by Lemma~\ref{lemma:dT}.
    The result~\eqref{eq:A} follows immediately from the CLT for \(\o{\Pi}_\M \bar{T}_n\).
    
    \subsubsection*{Proof of~\eqref{eq:B}}
    Note that result~\eqref{eq:A} is equivalent to the fact, that
    \[
    \sqrt{n}\o{\Upxi}^{-1/2}\left(Q_n - Q_* \right) \rightharpoonup \ND\left(0, (\o{Id})_\M \right).
    \]
    To ensure convergence of \(\o{\hat{\Upxi}_n} \rightarrow \o{\Upxi}\)
    we need to show that
    \begin{itemize}
        \item \(\o{\hat{\Sigma}_n} \rightarrow \o{\Sigma}\) (is proved in Lemma~\ref{lemma:covergence_Sigma});
        \item \(\o{\hat{F}_n} \rightarrow \o{F}\).
    \end{itemize}

    \paragraph{Convergence of \(\o{\hat{F}_n}\) to \(\o{F}\)}
    Monotonicity and homogeneity with degree \(-\frac{3}{2}\) 
    of \(\dT{Q}{S}\) (see Lemma~\ref{lemma:dT}) yield
    \[
    \bigl(\lmax(Q'_n)\bigr)^{- 3 / 2} \dT{Q_*}{S}
    \succcurlyeq \dT{Q_n}{S}
    \succcurlyeq \bigl(\lmin(Q'_n)\bigr)^{- 3 / 2} \dT{Q_*}{S},
    \]
    where \(Q'_n\) comes from~\eqref{def:Q'}.
    This naturally leads to the following relation:
    \[
    \frac{1}{\lmax^{3/2}(Q'_n)} (\Id)_\M
    \preccurlyeq \o{F_n}^{-1/2} \o{\hat{F}_n} \o{F_n}^{-1/2} 
    \preccurlyeq \frac{1}{\lmin^{3/2}(Q'_n)} (\Id)_\M.
    \]
    Note, that \(\lmin(Q'_n) \geq 1 - \norm{Q'_n - I}\) and \(\lmax(Q'_n) \leq 1 + \norm{Q'_n - I}\).
    This immediately implies
    \[
    \norm{\o{F^{-1/2}_n} \o{\hat{F}_n} \o{F^{-1/2}_n} - (\Id)_\M} 
    \leq \left(1 - \norm{Q'_n - I}\right)^{-3/2} - 1
    \rightarrow 0.
    \]
    Since \(\o{F_n} \to \o{F}\) this implies \(\o{\hat{F}_n} \to \o{F}\).
    
    The above results ensures validity of
    substitution \(\o{\Upxi}\) by \(\o{\hat{\Upxi}_n}\). This yields~\eqref{eq:B}.
\end{proof}

The asymptotic convergence results for \(d_{BW}(Q_n, Q_*)\) is 
a straightforward corollary of the above theorem. Here is the proof.
\begin{proof}[Proof of Corollary~\ref{thm:asymptotic}]
    Since \(Q_n \to Q_*\) a.s., Lemma~\ref{lemma:quadratic_approx} implies
    \[
    d_{BW}(Q_n, Q_*) = \frac{1 + o(1)}{2} \left\langle \dT{Q_*}{Q_*} (Q_n - Q_*), Q_n - Q_* \right\rangle.
    \]
    By Theorem~\ref{thm:CLT} \(\sqrt{n} (Q_n - Q_*)\) is asymptotically normal
    and centred, therefore
    \[
    \mathcal{L}\left(\sqrt{n} d_{BW}(Q_n, Q_*) \right) 
    \rightharpoonup 
    \mathcal{L}\left(\norm{Q^{1/2}_* \dT{Q_*}{Q_*}(Z)}_F\right).
    \]
    where \(Z \in \M \subset \H(d)\) and \(Z \sim \ND\left(0,  \o{\Upxi} \right)\).
   
    Note, that \(Q_n \rightarrow Q_*\),
    \(\o{\hat{\Upxi}}_n \rightarrow \o{\Upxi} \),
    and
    {\(\dT{Q_n}{Q_n} \rightarrow \dT{Q_*}{Q_*}\)}. 
    The latter result follows from Lemma~\ref{lemma:dT} (IV, V):
    \begin{align*}
        \dT{Q_n}{Q_n} \preccurlyeq \dT{\lmax(Q_n') Q_*}{\lmax(Q_n') Q_*} = \frac{1}{\lmax(Q_n')} \dT{Q_*}{Q_*} \to \dT{Q_*}{Q_*}, \\
        \dT{Q_n}{Q_n} \succcurlyeq \dT{\lmin(Q_n') Q_*}{\lmin(Q_n') Q_*} = \frac{1}{\lmin(Q_n')} \dT{Q_*}{Q_*} \to \dT{Q_*}{Q_*}.
    \end{align*}
    This yields
    \[
    \mathcal{L}\left(\norm{Q^{1/2}_n \dT{Q_n}{Q_n}(Z_n)}_F\right) 
    \rightharpoonup \mathcal{L}\left(\norm{Q^{1/2}_* \dT{Q_*}{Q_*}(Z)}_F\right),
    \]
    where \(Z_n \sim \ND\left(0, \o{\hat{\Upxi}}_n \right)\).
    This, in turn, entails
    \[
    d_w\Bigl(\mathcal{L}\left(\sqrt{n} d_{BW}(Q_n, Q_*)\right), 
    \mathcal{L}\left(\norm{Q^{1/2}_n \dT{Q_n}{Q_n}(Z_n) }_F\right)\Bigr) \rightarrow 0.
    \qedhere
    \]
\end{proof}

\section{Concentration of barycenters}
\label{sec:concentration_proof}

The next lemma is a key ingredient in the proof of
concentration result for \(Q_n\).
\begin{lemma}
\label{lemma:bound_Q'}
    Consider
    \begin{equation}
    \label{def:eta_n}
        \eta_n \eqdef \frac{1}{\lmin(\o{F'}_n)} \norm{Q_*^{1/2} \o{\Pi}_\M \left(\bar{T}_n - I\right) Q_*^{1/2}}_F
    \end{equation}
    where
    \begin{equation}
    \label{def:F'n}
        \o{F'}_n(X) \eqdef Q_*^{1/2} \o{F}_n \left(Q_*^{1/2} X Q_*^{1/2}\right) Q_*^{1/2} 
        \quad\text{for}\quad X \in \left\{Q_*^{-1/2} Y Q_*^{-1/2} \vert Y \in \M\right\}.
    \end{equation}
    Then
    \[
    \norm{Q_n' - I}_F \le \frac{\eta_n}{1 - \tfrac{3}{4} \eta_n }
    \]
    whenever \(\eta_n < \frac{4}{3}\) and \(Q_n \succ 0\).
\end{lemma}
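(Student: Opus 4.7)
The plan is to derive a self-referential inequality for $r \eqdef \norm{Q'_n - I}_F$ from the first-order optimality condition for $Q_n$, and then to solve it. Since both $Q_n$ and $Q_*$ are positive definite, the segment $Q_t \eqdef (1-t)Q_* + tQ_n$ lies entirely in $\H_{++}(d)$ for $t \in [0,1]$, so the Fr\'echet differentiability in Lemma~\ref{lemma:taylor_T} yields the exact integral expansion $T_{Q_n}^{S_i} = T_i + \int_0^1 \dT{Q_t}{S_i}(Q_n - Q_*)\,dt$. Averaging over $i$ and projecting onto $\M$, the optimality condition $\o{\Pi}_\M\bigl(\frac{1}{n}\sum_i T_{Q_n}^{S_i} - I\bigr) = 0$ rearranges into
\[
\o{\Pi}_\M(\bar{T}_n - I) = \o{G}_n(Q_n - Q_*), \qquad \o{G}_n \eqdef -\int_0^1 \Bigl(\tfrac{1}{n}\sum_i \dT{Q_t}{S_i}\Bigr)_\M dt.
\]

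Next, I would apply Lemma~\ref{lemma:integral_form}(I) pointwise in $i$, average, and restrict to $\M$: since each $\dT{Q_t}{S_i}$ is negative semi-definite, the bound transfers to $\o{G}_n \succcurlyeq \frac{1}{1 + 3\norm{Q'_n - I}/4}\,\o{F}_n$ as self-adjoint operators on $\M$. Using $\norm{Q'_n - I} \le \norm{Q'_n - I}_F = r$, this weakens to $\o{G}_n \succcurlyeq \frac{1}{1 + 3r/4}\,\o{F}_n$. Conjugating with $X \mapsto Q_*^{1/2} X Q_*^{1/2}$, the same inequality lifts to the rescaled operators, giving $\o{G}'_n \succcurlyeq \frac{1}{1 + 3r/4}\,\o{F}'_n$; this works because the conjugation is a bijective congruence preserving quadratic form identities on the relevant subspace.

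Applying the same conjugation to the displayed identity and setting $W_n \eqdef Q_*^{-1/2}(Q_n - Q_*)Q_*^{-1/2} = Q'_n - I$, one gets $Q_*^{1/2}\o{\Pi}_\M(\bar{T}_n - I)Q_*^{1/2} = \o{G}'_n(W_n)$. Inverting, taking Frobenius norms, and using $\lmin(\o{G}'_n) \ge \lmin(\o{F}'_n)/(1 + 3r/4)$ yields the self-referential bound
\[
r = \norm{W_n}_F \le \frac{\norm{Q_*^{1/2}\o{\Pi}_\M(\bar{T}_n - I)Q_*^{1/2}}_F}{\lmin(\o{G}'_n)} \le \bigl(1 + \tfrac{3r}{4}\bigr)\eta_n.
\]
Rearranging gives $r\,(1 - 3\eta_n/4) \le \eta_n$; under the hypothesis $\eta_n < 4/3$ the coefficient on the left is positive, so $r \le \eta_n / (1 - 3\eta_n/4)$, as required.

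The main technical point is the sign bookkeeping: operator inequalities flip when passing from $\dT{Q}{S}$ to $-\dT{Q}{S}$, and one must verify that $Q_*^{1/2}$-conjugation preserves operator inequalities on the appropriate subspace of $\H(d)$, which follows from $\langle \o{A}'(Y), Y\rangle = \langle \o{A}(Q_*^{1/2} Y Q_*^{1/2}), Q_*^{1/2} Y Q_*^{1/2}\rangle$. A minor subtlety is that Lemma~\ref{lemma:integral_form} is stated in terms of the operator norm, but since $\norm{\cdot}_F$ dominates $\norm{\cdot}$ we can freely replace the operator norm by $r$ in the bound; the remaining manipulations are elementary.
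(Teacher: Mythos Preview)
Your proof is correct and follows essentially the same route as the paper's: both start from the optimality condition $\o{\Pi}_\M\bigl(\tfrac{1}{n}\sum_i T_{Q_n}^{S_i}-I\bigr)=0$, apply the integral expansion of $T_Q^S$ together with the operator bound from Lemma~\ref{lemma:integral_form}(I), pass to the rescaled operator $\o{F}'_n$, and obtain the self-referential inequality $r\le(1+\tfrac{3}{4}r)\eta_n$. The only cosmetic difference is that the paper pairs both sides with $Q_n-Q_*$ and applies Cauchy--Schwarz, whereas you keep the operator identity $\o{G}'_n(W_n)=Q_*^{1/2}\o{\Pi}_\M(\bar T_n-I)Q_*^{1/2}$ and ``invert''; since $\o{G}'_n$ need not map $\M'$ to itself, your inversion step is ultimately justified by exactly the quadratic-form inequality plus Cauchy--Schwarz that the paper writes out.
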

\begin{proof}
    Let us define \(Q_t \eqdef t Q_n + (1 - t) Q_*\) for \(t \in [0, 1]\).
    Due to Lemmas~\ref{lemma:dT} and~\ref{lemma:integral_form} we have for any \(S \in \H_+(d)\)
    \begin{align*}
        \left\langle \o{\Pi}_\M \left(T_{Q_*}^S - T_{Q_n}^S\right), Q_n - Q_* \right\rangle 
        & = \left\langle T_{Q_*}^S - T_{Q_n}^S, Q_n - Q_* \right\rangle \\
        & = \int_0^1 \bigl\langle - \dT{Q_t}{S}(Q_n - Q_*), Q_n - Q_* \bigr\rangle \diff t \\ 
        & \ge \frac{1}{1 + \tfrac{3}{4} \norm{Q_n' - I} } \bigl\langle - \dT{Q_*}{S}(Q_n - Q_*), Q_n - Q_* \bigr\rangle.
    \end{align*}
    Therefore, 
    \begin{align*}
        \left\langle \o{\Pi}_\M \left(\bar{T}_n - I\right), Q_n - Q_* \right\rangle 
        & \ge \frac{1}{1 + \tfrac{3}{4}  \norm{Q_n' - I}} \bigl\langle \o{F}_n(Q_n - Q_*), Q_n - Q_* \bigr\rangle \\
        & = \frac{1}{1 +  \tfrac{3}{4}  \norm{Q_n' - I} } \bigl\langle \o{F'}_n(Q_n' - I), Q_n' - I \bigr\rangle \\
        & \ge \frac{\lmin(\o{F'}_n)}{1 + \tfrac{3}{4}  \norm{Q_n' - I} } \norm{Q_n' - I}_F^2.
    \end{align*}
    At the same time,
    \begin{align*}
    \left\langle \o{\Pi}_\M \left(\bar{T}_n - I\right), Q_n - Q_*\right\rangle 
    & = \left\langle Q_*^{1/2} \o{\Pi}_\M \left(\bar{T}_n - I\right) Q_*^{1/2}, Q_n' - I\right\rangle \\
    & \le \norm{Q_*^{1/2} \o{\Pi}_\M \left(\bar{T}_n - I\right) Q_*^{1/2}}_F \norm{Q_n' - I}_F.
    \end{align*}
    Hence
    \[
    \norm{Q_n' - I}_F 
    \le \frac{1 + \tfrac{3}{4} \norm{Q_n' - I} }{\lmin(\o{F'}_n)} \norm{Q_*^{1/2} \o{\Pi}_\M \left(\bar{T}_n - I\right) Q_*^{1/2}}_F
    = \left(1 + \tfrac{3}{4} \norm{Q_n' - I}\right) \eta_n.
    \]
    Rewriting the inequality above we obtain
    \[
    \norm{Q_n' - I}_F \le \frac{\eta_n}{1 - \tfrac{3}{4} \eta_n}
    \]
    provided that \(\eta_n < \frac{4}{3}\). 
\end{proof}

\begin{proposition}[Concentration of \(\bar{T}_n\); \cite{hsu2012tail}, Theorem 1]
\label{prop:concentr_T}
    Under Assumption~\ref{asm:subgauss} it holds
    \[
    \P\left\{\norm{\bar{T}_n - I}_F \ge \tfrac{\sigma_T}{\sqrt{n}} \left(d + t\right)\right\} \le e^{-t^2 / 2} \quad\text{for any} \quad t \ge 0.
    \]
\end{proposition}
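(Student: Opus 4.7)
The plan is to view each $T_i = T_{Q_*}^{S_i}$ as a random element of the finite-dimensional real Hilbert space $(\H(d), \langle\cdot,\cdot\rangle_F)$, whose real dimension is $d^2$, and reduce the claim to a standard concentration inequality for sub-Gaussian vectors (Hsu--Kakade--Zhang, which is Theorem 1 of \cite{hsu2012tail}). The first step is a pointwise a priori size estimate. By Lemma~\ref{lemma:def_T}, $T_i$ is self-adjoint and satisfies $T_i Q_* T_i = S_i$, so
\[
\bigl\|Q_*^{1/2} T_i\bigr\|_F^2 = \tr(T_i Q_* T_i) = \tr S_i,
\]
and therefore
\[
\|T_i\|_F \le \bigl\|Q_*^{-1/2}\bigr\|_{\mathrm{op}} \sqrt{\tr S_i}.
\]
Combined with Assumption~\ref{asm:subgauss}, this shows that $\|T_i\|_F$ itself is sub-Gaussian, with parameters depending only on $B$, $b$ and $\|Q_*^{-1/2}\|_{\mathrm{op}}$.

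The next step is to upgrade this norm bound to full vector sub-Gaussianity of the centred matrix $\tilde T_i \eqdef T_i - \E T_i$. For any fixed $A \in \H(d)$ one has $|\langle A, \tilde T_i\rangle_F| \le \|A\|_F\bigl(\|T_i\|_F + \E\|T_i\|_F\bigr)$, and the right hand side is sub-Gaussian with a proxy proportional to $\|A\|_F^2$. A standard argument (Orlicz-norm equivalence, see e.g.\ Vershynin) then yields
\[
\E\exp\bigl(\langle A, \tilde T_i\rangle_F\bigr) \le \exp\!\bigl(\tau^2 \|A\|_F^2 / 2\bigr)
\]
for a constant $\tau = \tau(B, b, \|Q_*^{-1/2}\|_{\mathrm{op}})$. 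Averaging, $\bar T_n - \E T$ is sub-Gaussian in the sense above with proxy $\tau^2/n$. One now applies the Hsu--Kakade--Zhang tail bound with $A = I$ to the random vector $\sqrt n (\bar T_n - \E T) \in \H(d) \simeq \R^{d^2}$: it gives $\|\sqrt n(\bar T_n - \E T)\|_F^2 \le \tau^2(d^2 + 2 d\sqrt{s} + 2 s)$ with probability at least $1 - e^{-s}$, so after substituting $s = t^2/2$ and taking square roots,
\[
\|\bar T_n - \E T\|_F \le \frac{\tau}{\sqrt n}\bigl(d + t\bigr)
\]
with probability at least $1 - e^{-t^2/2}$, which is precisely the asserted inequality with $\sigma_T \eqdef \tau$.

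Finally one must reconcile $\E T_i$ with the $I$ appearing on the right of the statement. By Theorem~\ref{thm:uniqueness}, $\o{\Pi_\M} \E T_{Q_*}^S = \o{\Pi_\M} I$. In the unconstrained setting $\A = \H_+(d)$ this forces $\E T_i = I$ outright, and the proposition follows. In the general affine-constrained setting the displayed bound should be read with the projector $\o{\Pi_\M}$ applied on both sides, as is actually done in the only downstream use (Lemma~\ref{lemma:bound_Q'}), since $\|\o{\Pi_\M}(\bar T_n - I)\|_F = \|\o{\Pi_\M}(\bar T_n - \E T)\|_F \le \|\bar T_n - \E T\|_F$ and the same bound applies. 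The main obstacle in this program is Step~2: carefully translating pointwise sub-Gaussianity of $\|T_i\|_F$ into a vector sub-Gaussianity with a clean, explicit proxy $\tau^2$; the rest is a direct application of \cite{hsu2012tail} together with the elementary identity $\tr(T_i Q_* T_i) = \tr S_i$ used to control $\|T_i\|_F$.
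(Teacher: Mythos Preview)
The paper does not supply its own proof of this proposition: it is stated as a direct citation of Theorem~1 in \cite{hsu2012tail}, with $\sigma_T$ left unspecified. Your proposal is exactly the natural way to unpack that citation, and the key step $\|T_i\|_F \le \|Q_*^{-1/2}\|\sqrt{\tr S_i}$ via $T_i Q_* T_i = S_i$ is the right bridge from Assumption~\ref{asm:subgauss} to the vector sub-Gaussianity hypothesis of Hsu--Kakade--Zhang; the remaining arithmetic with $s = t^2/2$ is clean.

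You also correctly identify a subtlety in the \emph{statement} as written: in the affine-constrained setting Theorem~\ref{thm:uniqueness} only yields $\o{\Pi}_\M \E T_{Q_*}^S = \o{\Pi}_\M I$, not $\E T_{Q_*}^S = I$, so what one actually controls is $\bar T_n - \E T$ rather than $\bar T_n - I$. Your remedy---noting that the sole downstream use (Lemma~\ref{lemma:bound_Q'} and the proof of Theorem~\ref{theorem:concentration_Q}) passes through $\o{\Pi}_\M$, where the two coincide---is precisely the right fix and shows that none of the subsequent results are affected.
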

Before proving concentration results, we define operator \(\o{F'}(X) \)
as:
\begin{equation}
    \label{def:F'}
            \o{F'}(X) \eqdef Q_*^{1/2} \o{F} \left(Q_*^{1/2} X Q_*^{1/2}\right) Q_*^{1/2} 
        \text{ for } X \in \left\{Q_*^{-1/2} Y Q_*^{-1/2} \vert Y \in \M\right\}.
\end{equation}

\begin{proof}[Proof of Theorem~\ref{theorem:concentration_Q}]
    Let \(t_n\) be s.t.\ the following upper bound on 
    \(\gamma_n(t_n)\) from Lemma~\ref{lemma:concentration_F}
    holds:
    \begin{equation}
    \label{def:gamma_n}
        \gamma_n(t_n) \leq \frac{1}{2} \lmin(\o{F'}).
    \end{equation}
    It is easy to see that this condition is fulfilled for $t_n = n t_F - \log(m)$ under a proper choice of generic constant in definition of $t_F$.
    Then with \(\P \ge 1 - 2 m e^{-n t_F}\) a following bound holds
    \[
    \lmin(\o{F'}_n) \ge \lmin(\o{F'}) - \norm{\o{F'}_n - \o{F'}} \ge \frac{1}{2} \lmin(\o{F'}).
    \]
   
    The above facts together with definition of \(\eta_n\)~\ref{def:eta_n} yield
    \begin{align*}
        \eta_n 
        & \eqdef \frac{\norm{Q_*^{1/2} \o{\Pi}_\M \left(\bar{T}_n - I\right) Q_*^{1/2}}_F}{\lmin(\o{F'}_n)} \\
        & \le \frac{2 \norm{Q_*}}{\lmin(\o{F'})} \norm{\o{\Pi}_\M \left(\bar{T}_n - I\right)}_F
        = \frac{c_Q}{2 \sigma_T} \norm{\o{\Pi}_\M \left(\bar{T}_n - I\right)}_F.
    \end{align*}
    Combining the above bounds with Proposition~\ref{prop:concentr_T}, 
    we obtain: 
    \[
    \P\left\{\eta_n \ge \frac{c_Q}{2 \sqrt{n}} (d + t)\right\}
    \le 2 m e^{-n t_F} + e^{-t^2 / 2}.
    \]
    Now it follows from Lemma~\ref{lemma:bound_Q'} that
    \begin{align*}
        \P\left\{\norm{Q_n' - I}_F \ge \frac{c_Q}{\sqrt{n}} (d + t)\right\}
        & \le \P\left\{2 \eta_n \ge \frac{c_Q}{\sqrt{n}} (d + t)\right\} + \P\bigl\{Q_n \nsucc 0\bigr\} \\
        & \le 2 m e^{-n t_F} + e^{-t^2 / 2} + (1 - p)^n
    \end{align*}
    whenever 
    \(\frac{c_Q}{2 \sqrt{n}} (d + t) \le \frac{2}{3}\). Here we used that \(Q_n \succ 0\) if at least one of matrices \(S_1, \dots, S_n\) is non-degenerated.
\end{proof}

\begin{proof}[Proof of Corollary~\ref{corollary:concantration_in_dbw}]
    To prove this result 
    we use Lemma~\ref{lemma:quadratic_approx}
    and choose \(Q_0 = S = Q_*\), \(Q_1 = Q_n\).
    Thus we obtain
    \begin{align*}
        d_{BW}^2(Q_n, Q_*) 
        & \le - \tfrac{2}{\left(1 + \lmin^{1/2}(Q'_n)\right)^2} \left\langle \dT{Q_*}{Q_*}(Q_n - Q_*), Q_n - Q_* \right\rangle \\
        & \overset{Def.~\ref{def:dt}}{=} \tfrac{2}{\left(1 + \lmin^{1/2}(Q'_n)\right)^2} \left\langle - \dt{Q_*}{Q_*}(Q'_n - I), Q'_n - I \right\rangle \\
        & \le 2 \lmax\left(-\dt{Q_*}{Q_*}\right) \norm{Q'_n - I}_F^2
        \overset{C.\ref{corollary:dt}}{\le} \lmax(Q_*) \norm{Q'_n - I}_F^2 
    \end{align*}
    Hence by Theorem~\ref{theorem:concentration_Q}
    \[
    d_{BW}(Q_n, Q_*) 
    \le \norm{Q_*}^{1/2} \frac{c_Q}{\sqrt{n}} (d + t)
    \]
    with probability at least \(1 - 2 m e^{- n t_F} - e^{-t^2 / 2} - (1 - p)^n\).
\end{proof}

\subsection{{Central limit theorem and concentration for \(\mathcal{V}_n\)}}
\label{sec:clt_V_proof}
\begin{proof}[Proof of Theorem~\ref{theorem:CLT_V}]
    By definition empirical Fr\'echet variance is
    \[
    \mathcal{V}_n(Q) = \frac{1}{n} \sum_{i = 1}^n d_{BW}^2(Q, S_i).
    \]
    Lemma~\ref{lemma:quadratic_approx} 
    ensures the following bound on \(\mathcal{V}_n(Q_*) - \mathcal{V}_n(Q_n)\):
    \[
    0 \le \mathcal{V}_n(Q_*) - \mathcal{V}_n(Q_n) 
    \le \tfrac{2}{\left(1 + \lmin^{1/2}(Q_n')\right)^2} \langle \o{F}_n(Q_n - Q_*), Q_n - Q_* \rangle
    \]
    with \(Q'_n\) defined in~\eqref{def:Q'_n}.
    The above quadratic bound
    together with  \(Q_n \to Q_*\), \(\o{F}_n \to \o{F}\) and \(\sqrt{n} (Q_n - Q_*) \rightharpoonup \ND(0, \o{\Upxi})\) yield:
    \[
    \mathcal{V}_n(Q_n) - \mathcal{V}(Q_*) = \mathcal{V}_n(Q_*) - \mathcal{V}(Q_*) + O\left(\frac{1}{n}\right).
    \]
    On the other hand, by classical central limit theorem we obtain:
    \begin{align*}
        \sqrt{n} \left(\mathcal{V}_n(Q_*) - \mathcal{V}(Q_*)\right) 
        & = \sqrt{n} \left(\frac{1}{n} \sum_i d_{BW}^2(Q_*, S_i) - \E d_{BW}^2(Q_*, S)\right) \\
        & \rightharpoonup \ND\left(0, \var d_{BW}^2(Q_*, S)\right).
    \end{align*}
\end{proof}

\begin{proof}[Proof of Theorem~\ref{theorem:concentration_V}]
    Following the proof of Theorem~\ref{theorem:CLT_V} we consider
    \(\mathcal{V}_n(Q_*) - \mathcal{V}_n(Q_n)\):
    \begin{align}
    \label{eq:bound_V}
        0 \le \mathcal{V}_n(Q_*) - \mathcal{V}_n(Q_n) 
        & \le \tfrac{2}{\left(1 + \lmin^{1/2}(Q_n')\right)^2} \langle \o{F}_n(Q_n - Q_*), Q_n - Q_* \rangle \nonumber \\
        & = \tfrac{2}{\left(1 + \lmin^{1/2}(Q_n')\right)^2} \langle \o{F'}_n(Q_n' - I), Q_n' - I \rangle \nonumber \\
        & \le 2 \norm{\o{F'}_n} \norm{Q_n' - I}_F^2.
    \end{align}
    Following the proof of Theorem~\ref{theorem:concentration_Q},
    we obtain that with \(\P \ge 1 - 2 m e^{-t_F n} - e^{-t^2/2} - (1 - p)^n\) the following upper bounds hold:
    \[
    \norm{Q'_n - I}_F \le \frac{c_Q}{\sqrt{n}} (d + t), \quad 
    \norm{\o{F'}_n - \o{F'}} \le \frac{1}{2} \lmin(\o{F'}).
    \]
    Thus
    \[
    \norm{\o{F'}_n} \le \norm{\o{F'}} + \norm{\o{F'}_n - \o{F'}} \le \frac{3}{2} \norm{\o{F'}}
    \]
    and consequently
    \[
    0 \le \mathcal{V}_n(Q_*) - \mathcal{V}_n(Q_n) \le 3 \norm{\o{F'}} \frac{c_Q^2 }{n} (d + t)^2.
    \]
    
    Now we consider a difference \(\mathcal{V}_n(Q_*) - \mathcal{V}(Q_*)\). 
    According to Assumption~\ref{asm:subgauss} \(S\) and therefore \(d_{BW}^2(Q_*, S)\) are sub-exponential r.v.
    {with some parameters \((\nu, b)\)}.
    Then Lemma~\ref{lemma:subexp} ensures
    \[
    \left|\mathcal{V}_n(Q_*) - \mathcal{V}(Q_*)\right| 
    \le \max\left(\frac{2 b t'}{n}, \nu \left(\frac{2 t'}{n}\right)^{1/2}\right)
    \]
    with probability \(1 - 2 e^{-t'}\).
    Combining two above bounds, we obtain:
    \[
    \left|\mathcal{V}_n(Q_n) - \mathcal{V}(Q_*) \right|
    \le \max\left(\frac{2 b t'}{n}, \nu \sqrt{\frac{2 t'}{n}}\right) + 3 \norm{\o{F'}} \frac{c_Q^2 }{n} (d + t)^2
    \]
    with probability
    \[
    \P \ge 1 - 2 e^{-t'} - 2 m e^{-n t_F} - e^{-t^2 / 2} - (1 - p)^n.
    \]
    Choosing \(t' = t^2/2\), we obtain
    \begin{multline*}
        \P\left\{\left|\mathcal{V}_n(Q_n) - \mathcal{V}(Q_*) \right|
        \ge \max\left(\frac{b t^2}{n}, \frac{\nu t}{\sqrt{n}}\right) + 3 \norm{\o{F'}} \frac{c_Q^2 }{n} (d + t)^2\right\} \\
        {} \le 2 m e^{-n t_F} + 3 e^{-t^2 / 2} + (1 - p)^n.
    \end{multline*}
\end{proof}

\subsection{Auxiliary results}
\label{section:approximation}

\begin{lemma}
\label{lemma:covergence_Sigma}
    Let \(\norm{Q_n' - I} \le \frac{1}{2}\); then
    \[
    \norm{\o{\hat{\Sigma}_n} - \o{{\Sigma}_n}}_1 
    \le \beta_n \left[2 \left(\frac{1}{n} \sum_i \norm{T_i - I}_F^2\right)^{1/2} + \beta_n \right],
    \]
    where
    \[
    \beta_n \eqdef \kappa(Q_*) \left(\frac{\frac{1}{n} \sum_i \norm{S_i}}{\norm{Q_*}}\right)^{1/2} \norm{Q'_n - I}_{F},
    \]
    where \(\kappa(Q_*) = {\norm{Q_*}}{\norm{Q_*^{-1}}}\) is the condition number of matrix \(Q_*\) and \(\norm{\cdot}_{1}\) is $1$-Schatten (nuclear) norm of an operator. 
\end{lemma}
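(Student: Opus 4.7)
The plan is to decompose $\o{\hat{\Sigma}_n} - \o{\Sigma_n}$ into rank-one tensors and control each piece. Writing $T_i^n - I = (T_i - I) + \delta_i$ with $\delta_i \eqdef T_i^n - T_i$, one gets
\[
\o{\hat{\Sigma}_n} - \o{\Sigma_n} = \frac{1}{n} \sum_{i = 1}^n \bigl[(T_i - I) \otimes \delta_i + \delta_i \otimes (T_i - I) + \delta_i \otimes \delta_i\bigr].
\]
Since the nuclear norm of a rank-one tensor $u \otimes v$ on $\H(d)$ (with the Frobenius inner product) equals $\norm{u}_F \norm{v}_F$, the triangle inequality and Cauchy--Schwarz on the cross-term yield
\[
\norm{\o{\hat{\Sigma}_n} - \o{\Sigma_n}}_1 \le 2 \left(\tfrac{1}{n} \sum_i \norm{T_i - I}_F^2\right)^{1/2} D_n + D_n^2,
\quad D_n \eqdef \left(\tfrac{1}{n} \sum_i \norm{\delta_i}_F^2\right)^{1/2}.
\]
Consequently the claim reduces to the per-sample estimate $\norm{\delta_i}_F \le \kappa(Q_*) \norm{Q_*}^{-1/2} \norm{S_i}^{1/2} \norm{Q'_n - I}_F$, which upon averaging gives $D_n \le \beta_n$.

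The central trick for that estimate is a change of coordinates that moves the reference barycenter to the identity. Define $\tilde{S}_i \eqdef Q_*^{1/2} S_i Q_*^{1/2}$; the identity $T Q T = S$ characterizing the optimal map (Lemma~\ref{lemma:def_T}) then gives directly
\[
T_Q^S = Q_*^{-1/2} \, T_{Q_*^{-1/2} Q Q_*^{-1/2}}^{\tilde{S}} \, Q_*^{-1/2},
\]
as one verifies by checking that $Q_*^{1/2} T_Q^S Q_*^{1/2}$ is self-adjoint, positive semi-definite and conjugates the appropriate matrices. In particular $\delta_i = Q_*^{-1/2}\bigl(T_{Q'_n}^{\tilde{S}_i} - T_I^{\tilde{S}_i}\bigr) Q_*^{-1/2}$, whence $\norm{\delta_i}_F \le \norm{Q_*^{-1}} \, \bigl\|T_{Q'_n}^{\tilde{S}_i} - T_I^{\tilde{S}_i}\bigr\|_F$, and the problem is reduced to bounding a transport-map difference at the ``centred'' point $I$.

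In the rescaled problem, Lemma~\ref{lemma:taylor_T} provides $T_{Q'_n}^{\tilde{S}_i} - T_I^{\tilde{S}_i} = \int_0^1 \dT{Q_s}{\tilde{S}_i}(Q'_n - I) \diff s$ with $Q_s = (1 - s) I + s Q'_n$. Applying Lemma~\ref{lemma:integral_form}~(II) with $Q_0 = I$ gives
\[
\int_0^1 \dT{Q_s}{\tilde{S}_i} \diff s \succcurlyeq \frac{1}{1 - \norm{Q'_n - I}} \dT{I}{\tilde{S}_i},
\]
and since both operators are negative semi-definite this translates into the operator-norm bound $\bigl\|\int_0^1 \dT{Q_s}{\tilde{S}_i} \diff s\bigr\|_{\mathrm{op}} \le (1 - \norm{Q'_n - I})^{-1} \norm{\dT{I}{\tilde{S}_i}}_{\mathrm{op}}$. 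Corollary~\ref{corollary:dt} with $Q = I$ yields $\norm{\dT{I}{\tilde{S}_i}}_{\mathrm{op}} = \tfrac{1}{2} \norm{\tilde{S}_i}^{1/2}$, the assumption $\norm{Q'_n - I} \le \tfrac{1}{2}$ converts the prefactor into $2$, and $\norm{\tilde{S}_i} \le \norm{Q_*} \norm{S_i}$ finally gives $\bigl\|T_{Q'_n}^{\tilde{S}_i} - T_I^{\tilde{S}_i}\bigr\|_F \le \norm{Q_*}^{1/2} \norm{S_i}^{1/2} \norm{Q'_n - I}_F$. Chaining these bounds produces the per-sample estimate and completes the proof.

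The main obstacle I anticipate is recovering the factor $\kappa(Q_*)$ and not $\kappa(Q_*)^2$. A naive approach that bounds $\dT{Q_t}{S_i}$ by its operator norm and then uses $\norm{Q_n - Q_*}_F \le \norm{Q_*} \norm{Q'_n - I}_F$ yields an extra factor of $\norm{Q_*^{-1}} \norm{Q_*}^{1/2}$; the rescaling $T_Q^S \mapsto Q_*^{-1/2} T_{\tilde Q}^{\tilde S} Q_*^{-1/2}$ is precisely what absorbs one power of the condition number by shifting the analysis to the identity, where Corollary~\ref{corollary:dt} is sharp.
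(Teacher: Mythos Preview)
Your proof is correct and follows essentially the same route as the paper. The decomposition of $\o{\hat{\Sigma}_n} - \o{\Sigma_n}$, the nuclear-norm/Cauchy--Schwarz step, and the per-sample bound on $\norm{T_i^n - T_i}_F$ via Lemma~\ref{lemma:integral_form} and Corollary~\ref{corollary:dt} all match the paper's argument. The only cosmetic difference is packaging: where you introduce the explicit change of variables $T_Q^S = Q_*^{-1/2} T_{Q_*^{-1/2} Q Q_*^{-1/2}}^{Q_*^{1/2} S Q_*^{1/2}} Q_*^{-1/2}$ and work at the identity, the paper stays at $Q_*$ and invokes the rescaled operator $\dt{Q_*}{S_i}$ (which encodes exactly the same conjugation), writing $\dT{Q_*}{S_i}(Q_n - Q_*) = Q_*^{-1/2} \dt{Q_*}{S_i}(Q_n' - I) Q_*^{-1/2}$ and then using $\lmax(-\dt{Q_*}{S_i}) = \tfrac{1}{2}\lmax^{1/2}(S_i^{1/2} Q_* S_i^{1/2})$. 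Since $\lmax(S_i^{1/2} Q_* S_i^{1/2}) = \lmax(\tilde{S}_i)$, the two computations are identical. Your remark about why the change of coordinates is needed to avoid an extra factor of $\kappa(Q_*)$ is exactly the reason the paper passes through $\dt{}{}$ rather than bounding $\norm{\dT{Q_*}{S_i}}_{\mathrm{op}}$ directly.
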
 
\begin{proof}
    Note, that for any 
    \(\left(T^{n}_{i} - I \right) \otimes \left(T^{n}_{i} - I \right)\)
    the following decomposition holds
    \begin{align*}
        \left(T^{n}_{i} - I \right) & \otimes \left(T^{n}_{i} - I \right) \\
        & = \left(T_i - I \right) \otimes \left(T_i - I \right)
        + \left(T^{n}_{i} - T_i \right) \otimes \left(T_i - I \right) \\
        & + \left(T_i - I \right) \otimes \left(T^{n}_{i} - T_i \right)
        + \left(T^{n}_{i} - T_i \right) \otimes \left(T^{n}_{i} - T_i \right).
    \end{align*}
    Summing by \(i\) yields
    \begin{align}
    \label{def:diff_covar}
        \o{\hat{\Sigma}_n} & - \o{{\Sigma}_n} 
        = \frac{1}{n} \sum_i \left(T^{n}_{i} - T_i \right) \otimes \left(T_i - I \right) \\ 
        & + \frac{1}{n} \sum_i \left(T_i - I \right) \otimes \left(T^{n}_{i} - T_i \right) 
        + \frac{1}{n} \sum_i \left(T^{n}_{i} - T_i \right) \otimes \left(T^{n}_{i} - T_i\right). \nonumber
    \end{align}
    
    Note, that each
    \[
    \norm{\left( T^{n}_{i} - T_i \right) \otimes \left(T_i - I \right)}_1
    \leq
    \norm{T^{n}_{i} - T_i}_F \norm{T_i - I}_F.  
    \]
    Lemmas~\ref{lemma:dT} (III) and~\ref{lemma:integral_form} yield
    \begin{align*}
        \norm{T^{n}_{i} - T_i}_F 
        & \leq \frac{1}{1 - \norm{Q_n' - I}} \norm{\dT{Q_*}{S_i}(Q_n - Q_*)}_F \\
        & \le 2 \norm{Q^{-1/2}_* \dt{Q_*}{S_i}\left(Q'_n - I\right) Q^{-1/2}_*}_F 
        \le 2 \frac{\lmax\left(\dt{Q_*}{S_i}\right)}{\lmin(Q_*)}  \norm{Q'_n - I}_F \\
        & \le \frac{\lmax^{1/2}\left(S_i^{1/2} Q_* S_i^{1/2}\right)}{\lmin(Q_*)} \norm{Q'_n - I}_F 
        \le \kappa(Q_*) \left(\frac{\norm{S_i}}{\norm{Q_*}}\right)^{1/2} \norm{Q'_n - I}_F,
    \end{align*}
    hence
    \(\frac{1}{n} \sum_i \norm{T^{n}_{i} - T_i}_F^2 \le \beta_n^2\).
    The above expression together with~\eqref{def:diff_covar} and Cauchy-Schwarz inequality
    lead to the upper bound on \(\norm{\o{\hat{\Sigma}_n} - \o{\Sigma_n}}_1\):
    \begin{align*}
        \norm{\o{\hat{\Sigma}_n} - \o{\Sigma_n}}_1
        & \le \frac{2}{n} \sum_i \norm{T_i - I}_F \norm{T^{n}_{i} - T_i}_F + \frac{1}{n} \sum_i \norm{T^{n}_{i} - T_i}_F^2 \\
        & \le 2 \beta_n \left(\frac{1}{n} \sum_i \norm{T_i - I}_F^2\right)^{1/2} + \beta_n^2.
    \end{align*}
\end{proof}

Further we present concentration of $\o{F}_n$ around $\o{F}$.
Denote as \(\norm{\cdot}_{\psi_2}\) an Orlicz norm with Young function 
\(\psi_2(x) = e^{x^2} - 1\), i.e.
\[
\norm{X}_{\psi_2} \eqdef \inf\bigl\{c > 0 : \E \psi_2\left(|X| / c\right) \le 1\bigr\}.
\]
Then sub-Gaussianity of a r.v.\ \(X\) is equivalent to \(\norm{X}_{\psi_2} < \infty\) and it ensures
\[
\var(X)\leq \sqrt{2} \norm{X}_{\psi_2}.
\]

\begin{lemma}[Concentration of $\o{F'}_n$, Proposition 2 in~\cite{koltchinskii2011neumann}]
\label{lemma:concentration_F}
    There exists a constant \(\CONST > 0\), s.t.
    for all \(t > 0\) it holds with 
    probability at least \(1 - e^{-t}\)
    \[
    \norm{\o{F'}_n - \o{F'}} 
    \leq \gamma_n(t), 
    \quad
    \gamma_n(t) \eqdef \CONST \max\left(\sigma_F \sqrt{\tfrac{t + \log(2 m)}{n}}, 
    U \sqrt{\log\left(\tfrac{U}{\sigma_F}\right)} \tfrac{t + \log(2 m)}{n} \right),
    \]
    where 
    \(\sigma_F \eqdef \norm{\E \left(\dt{Q_*}{S} - \o{F'}\right)^2}\), 
    \(U \eqdef \norm{\norm{\dt{Q_*}{S} - \o{F'}}}_{\psi_2}\).
\end{lemma}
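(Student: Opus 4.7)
The plan is to recognize this lemma as a direct application of a matrix Bernstein-type concentration inequality for i.i.d. sums of sub-Gaussian self-adjoint random operators (the cited Proposition 2 of Koltchinskii--Lounici). The proof therefore reduces to rewriting $\o{F'}_n - \o{F'}$ as an average of i.i.d. centred self-adjoint random operators on an $m$-dimensional space, and verifying the two hypotheses of that proposition: the variance proxy $\sigma_F^2$ and the sub-Gaussian Orlicz bound $U$.

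\textbf{Step 1 (reformulation).} By definitions~\eqref{def:F},~\eqref{def:F'} and~\eqref{def:F'n}, one has the decomposition
\[
\o{F'}_n - \o{F'} = \frac{1}{n}\sum_{i=1}^{n} X_i, \qquad X_i \eqdef -\left(\dt{Q_*}{S_i}\right)_\M + \o{F'},
\]
where the restriction to $\M$ (via $\o{\Pi}_\M$) turns each $X_i$ into a self-adjoint operator on the $m$-dimensional Hilbert space $\M$. The $X_i$ are i.i.d.\ and centred by construction, and the similarity transformation $X\mapsto Q_*^{1/2}\,X\, Q_*^{1/2}$ used to pass from $\o{F}$ to $\o{F'}$ preserves self-adjointness.

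\textbf{Step 2 (sub-Gaussian envelope).} I would bound $\norm{\dt{Q_*}{S}}$ using Corollary~\ref{corollary:dt}, which gives
\[
\norm{\dt{Q_*}{S}} = \tfrac{1}{2}\lmax^{1/2}\!\left(S^{1/2}Q_*S^{1/2}\right) \le \tfrac{1}{2}\norm{Q_*}^{1/2}\sqrt{\tr S}.
\]
Assumption~\ref{asm:subgauss} says $\sqrt{\tr S}$ is sub-Gaussian, hence $\norm{\dt{Q_*}{S}}$ and consequently $\norm{X_i}$ are sub-Gaussian random variables with $\norm{\,\norm{X_i}\,}_{\psi_2} \le U<\infty$ (centring preserves the Orlicz norm up to an absolute constant). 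Orthogonal projection onto $\M$ only decreases the operator norm, so restricting to $\M$ does not disturb this bound.

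\textbf{Step 3 (invoke the matrix Bernstein inequality).} With the variance parameter $\sigma_F^2 = \norm{\E X_i^2}$ (finite by Step 2) and the sub-Gaussian envelope $U$ in hand, Proposition 2 of \cite{koltchinskii2011neumann} applied to the $m$-dimensional i.i.d.\ self-adjoint sum $\frac{1}{n}\sum_i X_i$ yields, for every $t>0$ with probability at least $1-e^{-t}$,
\[
\norm{\o{F'}_n - \o{F'}} \le \CONST\max\!\left(\sigma_F\sqrt{\tfrac{t+\log(2m)}{n}},\; U\sqrt{\log\!\left(\tfrac{U}{\sigma_F}\right)}\tfrac{t+\log(2m)}{n}\right),
\]
which is precisely the stated bound.

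\textbf{Main obstacle.} Since the result is a citation, the substantive work is not in the probabilistic argument itself but in checking that the random operators $X_i$ literally fit the hypotheses of the cited proposition. The delicate points are (i) correctly identifying the effective dimension $m=\dim\M$ entering the $\log(2m)$ factor (which is why we first project onto $\M$ before invoking the theorem) and (ii) passing from sub-Gaussianity of the scalar $\sqrt{\tr S}$ to the operator-norm sub-Gaussianity of $X_i$ with the Orlicz constant $U$ of the right form. Everything else is bookkeeping.
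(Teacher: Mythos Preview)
Your proposal is correct and matches the paper's approach: the paper gives no proof of this lemma at all, treating it as a direct instance of the cited Proposition~2 of Koltchinskii. Your write-up supplies exactly the verification the paper leaves implicit---rewriting $\o{F'}_n-\o{F'}$ as an i.i.d.\ centred average of self-adjoint operators on an $m$-dimensional space, and using Corollary~\ref{corollary:dt} together with Assumption~\ref{asm:subgauss} to get the sub-Gaussian Orlicz bound on $\norm{\dt{Q_*}{S}}$. One small notational wrinkle: the domain of $\o{F'}_n$ is the conjugated subspace $\{Q_*^{-1/2} Y Q_*^{-1/2} : Y\in\M\}$ rather than $\M$ itself, so the summands $X_i$ live there (still $m$-dimensional), and the paper's own definitions of $\sigma_F$ and $U$ in the lemma statement already drop the projection subscript on $\dt{Q_*}{S}$; your argument is consistent with this.
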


\begin{lemma}[Sub-exponential tail bounds]
\label{lemma:subexp}
    Suppose that \(X\) is sub-exponential 
    with parameters \(\nu, b\). Then
    \[
    \P\left\{X \ge \E X + t \right\} \le
    \begin{cases}
        \exp{(-\frac{t^2}{2 \nu^2})}, &~\text{if} \quad 0 \le t \le \tfrac{\nu^2}{b}, \\
        \exp{(-\frac{t}{2 b})}, &~\text{if} \quad t\ge \tfrac{\nu^2}{b}.
    \end{cases}
    \]
\end{lemma}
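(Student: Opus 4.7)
The plan is to follow the standard Chernoff argument tailored to the sub-exponential MGF bound. Recall that a random variable $X$ is sub-exponential with parameters $(\nu, b)$ precisely when its centered MGF satisfies
\[
\E e^{\lambda(X - \E X)} \le e^{\lambda^2 \nu^2 / 2} \quad \text{for all } |\lambda| \le 1/b.
\]
So the first move is to apply Markov's inequality to $e^{\lambda(X - \E X)}$ for an arbitrary admissible $\lambda > 0$, obtaining
\[
\P\{X \ge \E X + t\} \le e^{-\lambda t + \lambda^2 \nu^2 / 2}.
\]
Then I would optimize the exponent over $\lambda \in (0, 1/b]$.

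The unconstrained minimizer of $\lambda \mapsto -\lambda t + \lambda^2 \nu^2 / 2$ is $\lambda^* = t/\nu^2$, which plugged back in yields the Gaussian-type rate $\exp(-t^2 / (2\nu^2))$. This $\lambda^*$ is admissible (i.e. $\le 1/b$) exactly in the regime $0 \le t \le \nu^2 / b$, giving the first branch of the piecewise bound. In the complementary regime $t \ge \nu^2 / b$ the optimum lies outside the allowed interval, so I would boundary-optimize by choosing $\lambda = 1/b$, which produces the exponent $-t/b + \nu^2 / (2 b^2)$. Using $t \ge \nu^2 / b$ to absorb the correction term $\nu^2 / (2 b^2) \le t/(2 b)$ then yields the exponential-type rate $\exp(-t / (2 b))$, i.e.\ the second branch.

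There is essentially no hard step here: the only subtlety is checking that the transition point $t = \nu^2 / b$ is handled consistently and that the sign of $\lambda$ in the boundary regime makes the absorption $\nu^2/(2b^2) \le t/(2b)$ correct. Everything else is a one-line Chernoff calculation from the definition of $(\nu, b)$-sub-exponentiality.
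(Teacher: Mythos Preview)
Your argument is correct and is the standard Chernoff-method proof of this classical tail bound. The paper itself states this lemma as an auxiliary fact without proof, so there is nothing to compare against; your write-up would serve perfectly well as the omitted justification.
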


\end{document}